\newtheorem{theorem}{Theorem}
\newtheorem{assumption}{Assumption}
\newtheorem{corollary}{Corollary}
\newtheorem{definition}{Definition}
\newtheorem{example}{Example}
\newtheorem{lemma}{Lemma}
\newtheorem{proposition}{Proposition}
\newtheorem{remark}{Remark}
\numberwithin{equation}{section}
\DeclareMathOperator{\Tr}{Tr}
\DeclareMathOperator{\diag}{diag}
\DeclareMathOperator{\blkdiag}{blkdiag}
\DeclareMathOperator{\vect}{vec}
\DeclareMathOperator{\poly}{poly}
\newcommand{\calB}{\ensuremath{\mathcal{B}}}
\newcommand{\calF}{\ensuremath{\mathcal{F}}}
\newcommand{\calN}{\ensuremath{\mathcal{N}}}
\newcommand{\calK}{\ensuremath{\mathcal{K}}}
\newcommand{\calE}{\ensuremath{\mathcal{E}}}
\newcommand{\Mtil}{\ensuremath{\tilde{M}}}
\newcommand{\norm}[1]{\big\|{#1}\big\|}
\newcommand{\abs}[1]{\left|{#1}\right|}
\newcommand{\set}[1]{\left\{{#1}\right\}}
\newcommand{\dotprod}[2]{\Big\langle#1,#2\Big\rangle}
\newcommand{\est}[1]{\widehat{#1}}
\newcommand{\expec}{\ensuremath{\mathbb{E}}}
\newcommand{\matR}{\ensuremath{\mathbb{R}}}
\newcommand{\argmin}[1]{\underset{#1}{\operatorname{argmin}}}
\newcommand{\prob}{\ensuremath{\mathbb{P}}}
\newcommand{\rev}[1]{\textcolor{black}{#1}}
\newcommand{\indic}{\ensuremath{\mathbf{1}}} 
\newcommand{\Xtil}{\widetilde{X}}
\newcommand{\Ubar}{\overline{U}}
\newcommand{\Mbar}{\overline{M}}
\newcommand{\xtil}{\widetilde{x}}
\newcommand{\Pproj}{P^{(\tau)}}
\newcommand{\Ptilproj}{\widetilde{P}^{(\tau)}}
\newcommand{\Mlam}{M_{\lambda}}
\newcommand{\Mlamtil}{\tilde{M_{\lambda}}}
\newcommand{\Mtau}{M_{\tau}}
\newcommand{\Vtau}{V_{\tau}}
\newcommand{\stoptime}{\overline{\tau}}
\newcommand{\lambmin}{\lambda_{\min}}
\title{Joint Learning of Linear Dynamical Systems under Smoothness Constraints}
\author{Hemant Tyagi\footnote{The first version of this paper was prepared when the author was affiliated to Inria Lille. This work was supported by a Nanyang Associate Professorship (NAP) grant from NTU Singapore.} \\ 
  Division of Mathematical Sciences, \\ School of Physical and Mathematical Sciences, \\ 
  Nanyang Technological University Singapore \\ 
Email: \texttt{hemant.tyagi@ntu.edu.sg}}
\begin{document}
\maketitle

\begin{abstract}
We consider the problem of joint learning of multiple linear dynamical systems. This has received significant attention recently under different types of assumptions on the model parameters. The setting we consider involves a collection of $m$ linear \rev{systems, each} of which resides on a node of a given undirected graph $G = ([m], \calE)$. We assume that the system matrices are marginally stable, and satisfy a smoothness constraint w.r.t $G$ -- the smoothness measured in a manner akin to the quadratic variation of a signal on a graph. Given access to the states of the nodes over $T$ time points, we then propose two estimators for joint estimation of the system matrices, along with non-asymptotic error bounds on the mean-squared error (MSE). In particular, we show conditions under which  the MSE converges to zero as $m$ increases, typically polynomially fast w.r.t $m$. The results hold under mild (i.e., $T \sim \log m$), or sometimes, even no assumption on $T$ (i.e. $T \geq 2$). 
\end{abstract}

% Introduction
%------------------
% Introduction
%
\section{Introduction} \label{sec:intro}
Learning a linear dynamical system (LDS) from samples of its trajectory is a fundamental problem with many applications, e.g., in control systems, reinforcement learning and time-series analysis to name a few. The focus of this paper is on autonomous linear systems, also referred to as vector autoregressive (VAR) models of order $1$. Here, the states $x_t \in \matR^d$ evolve with time as 
\begin{equation} \label{eq:single_lin_sys_intro}
    x_{t+1} = A^* x_{t} + \eta_{t+1}; \quad t=0,\dots,T; \ x_{0} = 0,
\end{equation}
with $A^*$ denoting the unknown system matrix to be estimated, and \rev{$(\eta_t)_{t \geq 1}$} denotes the (unobserved) process noise -- typically assumed to be centered and independent over $t$. While the problem has been studied extensively in the literature, classical work on linear system identification typically either consisted of asymptotic guarantees \rev{on the estimation error} (e.g., \citep{LaiWei82, LaiWei83, LaiWei1986}), or \rev{consisted of quantities which are} exponential in the degree of the linear system \citep{Campi2002, VidyaSagar2008}. 

%----------------------------------------------
% Finite-time identification of LDSs
%
\subsection{Finite-time identification of LDSs} \label{subsec:intro_finite_time_ident} 
A recent line of work has focused on finite-time identification of linear \rev{systems, which} amounts to deriving non-asymptotic error bounds for estimating $A^*$ (e.g., \citep{FaraUnstable18,Simchowitz18a, Sarkar19, Jedra20}). These results provide error bounds for estimating $A^*$ via the ordinary least-squares (OLS) estimator (yielding $\est{A}$), with the error quantified using the spectral norm $\norm{\est{A} - A^*}_2$. The error is shown to decay to zero at the (near-) parametric rate\footnote{Throughout, $\tilde{O}(\cdot)$ hides logarithmic factors.} $\tilde{O}(1/\sqrt{T})$ provided $T$ is suitably large enough, and under different assumptions on the spectral radius\footnote{See Section \ref{subsec:notation} for a description of notation.}  \rev{$\rho(A^*)$}. A typical assumption in this regard is that of \emph{stability}, which amounts to saying that either $\rho(A^*) < 1$ (strict stability), or even $\rho(A^*) \leq 1$ (marginal stability). The state-of-art result in this regard is arguably that of \cite{Sarkar19} -- when $A^*$ is marginally stable, it is shown that with probability at least $1-\delta$, 
\begin{align} \label{eq:sarkar_rakhlin_LDS_bd}
 \norm{\est{A} - A^*}_2 \lesssim \sqrt{\frac{d \log\left(\frac{1+\Tr(\Gamma_T(A^*))}{\delta}\right)}{T}} \quad \text{ if } \quad T \gtrsim d \log\left(\frac{1+\Tr(\Gamma_T(A^*))}{\delta} \right). 
\end{align}
Here $\Gamma_t(A)$ is the controllability Grammian of a $d \times d$ matrix $A$, defined as 
\begin{equation} \label{eq:control_gram}
    \Gamma_t(A) := \sum_{k=0}^{t} A^k (A^k)^\top
\end{equation}
for any integer $t \geq 0$. When $\rho(A^*) \leq 1$, then one can show that $\Tr(\Gamma_T(A^*))$ is at most polynomial in $T$, thus implying a  $\tilde{O}(1/\sqrt{T})$ rate for the error $\norm{\est{A} - A^*}_2$. Hence, to drive $\norm{\est{A} - A^*}_2$ below a specified threshold, we require $T$ to be polynomially large w.r.t $d$. We also remark that the OLS is a consistent estimator for only \emph{regular} matrices, i.e., $A^*$ for which the geometric multiplicity of eigenvalues outside the unit circle, is equal to one (e.g., \citep{Sarkar19}). The results in \citep{Sarkar19} provide error rates for estimating regular matrices with an otherwise arbitrary distribution of eigenvalues.

%---------------------------------------
% Learning a smooth collection of LDSs
%---------------------------------------
\subsection{Learning multiple LDSs under smoothness constraints} Our focus in this paper is on the setting where we are given samples of the trajectories of $m$ LDSs, governed by system matrices $A^*_1,\dots,A^*_m$, with the state of the $l$th system evolving as
\begin{equation} \label{eq:mult_linsys}
    x_{l,t+1} = A^*_{l} x_{l,t} + \eta_{l,t+1}; \quad t=0,\dots,T, \ x_{l,0} = 0.
\end{equation}
While we assume for convenience that the trajectories evolve independently of each other (see Section \ref{sec:prob_setup} for details), we place a smoothness assumption on $A^*_l$'s as dictated by a known underlying (undirected) network $G = ([m], \calE)$. More formally, we assume for $S_m \geq 0$ that
\begin{equation} \label{eq:quad_var_smooth}
    \sum_{\set{l,l'} \in \calE} \norm{A^*_l - A^*_{l'}}_F^2 \leq S_m
\end{equation}
which means that the quadratic variation of $A^*_1,\dots,A^*_m$ w.r.t $G$ is bounded. For bounded $A^*_l$'s one can obviously find \rev{$S_m = O(\abs{\calE})$} such that \eqref{eq:quad_var_smooth} is true, however the interesting smoothness regime is when \rev{$S_m = o(\abs{\calE})$}. Indeed, in this regime, the node-dynamics of the graph $G$ share a non-negligible amount of information between themselves, and thus one might hope to leverage this fact in order to jointly estimate $A^*_l$'s meaningfully. In this regard, the following ``sanity checks'' are useful.
\begin{itemize}
    \item The above setup is particularly meaningful when $T$ is small, thus preventing us from reliably estimating $A^*_l$'s on an individual basis, i.e., by only using the data $(x_{l,t})_{t=1}^{T+1}$. 

    \item Even if $T$ were large enough to be able to reliably estimate $A^*_l$'s individually (e.g., via OLS), one could still hope to obtain an even smaller estimation error via a suitable joint estimation procedure.
\end{itemize}
%
%
%----------------
% Contributions
%----------------
\paragraph{Contributions.} We propose two estimators for this problem, and derive non-asymptotic error bounds on the mean-squared error (MSE) $\frac{1}{m} \sum_{l=1}^m \norm{\est{A}_l - A^*_{l}}_F^2$, where $\est{A}_l$ is the estimate of $A^*_l$. \rev{For both the estimators, we establish conditions under which the MSE goes to $0$ as $m$ increases.}
\begin{enumerate}
    \item We first propose a smoothness penalized least-squares estimator that incorporates the quadratic variation in \eqref{eq:quad_var_smooth} as a penalty term, see Sections  \ref{subsec:lapl_smooth_algo} and \ref{subsec:lapl_smoothing_results}. For this estimator, our main result (see Theorem \ref{thm:lapl_smooth_strict_stab}) bounds the MSE with high probability for any connected $G$, provided $T$ is sufficiently large. In particular, the analysis requires $T$ to be at least as large as the \rev{term in \eqref{eq:sarkar_rakhlin_LDS_bd}} (maximized over $l \in [m]$), but with an additional additive $\log m$ factor as well. While this requirement on $T$ is an artefact of the analysis, we note that even so, our bounds on MSE are much smaller w.r.t $m$ than what we would obtain by estimating $A^*_l$'s individually (see Remark \ref{rem:laplsmooth_comp_sing_LDS_bd}). The above result is instantiated for different graph topologies with precise rates for the MSE when each $A^*_l$ is marginally stable (see Corollaries \ref{corr:lapl_smooth_strict_stab} and \ref{corr:lapl_smooth_strict_stab_compl_star}). 

    \item The second estimator we propose amounts to solving a least squares problem, constrained to the subspace spanned by the smallest few eigenvectors of the Laplacian of $G$ (see Section \ref{subsec:subs_contr_algo} and \ref{subsec:subspace_const_est_results}). Our main result for this estimator (see Theorem \ref{thm:thm_subs_constr_strict_stab}) bounds the MSE with high probability, and is applicable for the regime of small $T$ \emph{even} when $T = 2$.  On the other hand, the analysis applies for graphs $G$ for which the eigenvectors of the Laplacian are sufficiently ``delocalized'', i.e., the mass is approximately evenly spread out amongst the entries (Definition \ref{def:deloc_eigvecs}). Moreover, the result is only meaningful in the smoothness regime $S_m = o(m^{-1/3})$. The above result is instantiated for the path graph with precise rates for the MSE when each $A^*_l$ is marginally stable -- see Corollary \ref{corr:subs_constr_strict_stab_path} and the ensuing remarks for more details.
\end{enumerate}
In our simulations (see Section \ref{sec:sims}), we find that both the above estimators work well for small $T$. The proof technique for the above results leverages ideas from the work of \cite{Sarkar19} for learning marginally stable LDSs, along with traditional ideas from nonparametric regression. While Theorems \ref{thm:lapl_smooth_strict_stab} and \ref{thm:thm_subs_constr_strict_stab} are specifically applicable for the setup where each $A^*_l$'s is marginally stable, this is only for containing the length of the manuscript. The analysis can be extended to the more general setting involving regular matrices $A^*_l$, using ideas from \citep{Sarkar19}.

%---------------
% Related work
%---------------
\subsection{Related work}
%
%\todo{add work on: joint learning of LDSs, denoising smooth signals on graphs}
The problem of jointly learning multiple LDSs has received significant attention recently, especially due to many applications arising in modeling, e.g., the dynamics of brain-networks \citep{model_brain20, Gu2014ControllabilityOS}, the dynamics of flights at different altitudes \citep{Bosworth92}, gene expressions in genomics \citep{basu15a} etc. 

\cite{modi_jointlearn24} considered the setting where the system matrices $A^*_1,\dots,A^*_m$ share common basis matrices. More formally, for some $k$ (unknown) basis matrices $W_1,\dots,W_k \in \matR^{d \times d}$ it is assumed for each $l \in [m]$ that
%
%\begin{equation*}
        $A_{l} = \sum_{i=1}^k \beta_{l,i} W_i$
%\end{equation*}
%
for some (unknown) scalars $\beta_{l,1}, \dots, \beta_{l,k}$. They propose an estimator for estimating the system matrices and derive bounds on the MSE. In particular, it is shown that reliable estimation can be performed provided $T > k$, which is meaningful when $k \ll d^2$.  

\cite{pmlr-v211-wang23d} consider a federated approach to learning multiple LDSs, where the $l$th system (or client) is modeled as 
\begin{equation*} 
    x_{l,t+1} = A^*_{l} x_{l,t} + B^*_{l} u_{l,t} +  \eta_{l,t+1}; \quad t=0,\dots,T.
\end{equation*}
Here $u_{l,t}$ represents external input to the system. It is assumed that the clients cannot communicate with each other, but only with a single server. Moreover, it is assumed that $N_l$ independent trajectories are observed for the $l$th client. The goal is to find a common estimate $\est{A}, \est{B}$ to all the system matrices $A^*_l, B^*_l$. To achieve this, it is assumed that 
\begin{equation} \label{eq:intro_fedsys_assump}
    \max_{l,l' \in [m]} \norm{A^*_l - A^*_{l'}}_2 \leq \epsilon, \quad \max_{l,l' \in [m]} \norm{B^*_l - B^*_{l'}}_2 \leq \epsilon.
\end{equation}
Then, $\est{A}, \est{B}$ are obtained via the OLS method and error bounds are derived on $\norm{\est{A}-A^*_l}_2, \norm{\est{B}-B^*_l}_2$, uniformly over $l$. Their main result states that if $N_l$ is suitably large (w.r.t $T$ and system dimensions), and $\epsilon$ is small enough, then the estimation error (for each $l$) is smaller than what would be obtained by individual estimation at each client $l$. Note that \eqref{eq:intro_fedsys_assump} implies \eqref{eq:quad_var_smooth} with $S_m = O(m^2 \epsilon^2)$ and is applicable when $G$ is the complete graph. But \eqref{eq:quad_var_smooth} is clearly a much weaker condition as it allows for $\norm{A^*_l - A^*_{l'}}_F = \Omega(1)$ for some edges $\set{l,l'}$.

\cite{Chen2023MultiTaskSI} considered the setting of jointly learning systems of the form \eqref{eq:intro_fedsys_assump} by placing different structural assumptions on $A^*_l$'s. One of these assumptions stipulates $\norm{A^*_l - A^*_{l'}}_F^2 \leq \epsilon$ for all $l,l' \in [m]$, which is again stronger than \eqref{eq:quad_var_smooth}. The estimation is then performed via a penalized least-squares approach, which is essentially our estimator in Section \ref{subsec:lapl_smooth_algo} when $G$ is the complete graph. No theoretical results are provided for this estimator.

\cite{Xin2022IdentifyingTD} considered the setting of learning a LDS where we are given given the trajectory of not only the true system, but also that of a ``similar'' system. A weighted least-squares approach is proposed for estimating the true system parameters and it is shown that the error can be reduced by leveraging data from the two systems.

We remark that when $S_m = 0$ in \eqref{eq:quad_var_smooth} then we have $A^*_l = A^*$ for each $l$, and our problem reduces to learning a single LDS from $m$ independent trajectories. There exist many results in this regard for estimating the system matrices,  with error bounds w.r.t the spectral norm (see e.g., \citep{Xin22MultTraj, zheng2020non}).  

Finally, we remark that our motivation for the smoothness constraint \eqref{eq:quad_var_smooth} comes from the literature on denoising smooth signals on an undirected graph $G = ([m], \calE)$. Here, we are given noisy measurements $y \in \matR^m$ of a signal $x^* \in \matR^m$, where 
$$y_l = x^*_l + \epsilon_l; \quad l=1,\dots,m$$
with $\epsilon_l$ denoting independent and centered  random noise samples. 
The signal $x^*$ is assumed to be smooth w.r.t $G$, i.e., $(x^*)^\top L x^* \leq S_m$ for some ``small'' $S_m$, with $L$ denoting the Laplacian of $G$. A common method for estimating $x^*$ is the so-called Laplacian smoothing estimator \citep{sadhanala2016total}, where for a regularizer $\lambda \geq 0$,
\begin{equation} \label{eq:graph_den_lapl_smooth}
    \est{x} = \argmin{x \in \matR^m} \set{ \norm{x-y}_2^2 + \lambda x^\top L x} = (I_m + \lambda L)^{-1} y.
\end{equation}
Error rates for $\expec[\norm{\est{x} - x^*}_2^2]$ were obtained in \citep{sadhanala2016total} (for the optimal choice of $\lambda$) when $G$ is a multidimensional grid. To put things in perspective, note that the data available at each node in our setting is a vector-valued \emph{time-series} generated as in \eqref{eq:mult_linsys}, while the above setting typically has a single scalar-valued observation at each node. Although our estimator in Section \ref{subsec:lapl_smooth_algo} is motivated from \eqref{eq:graph_den_lapl_smooth}, its analysis is considerably more challenging than that of \eqref{eq:graph_den_lapl_smooth}.

% Problem Setup
%-------------------
% Problem setup
%-------------------
\section{Problem setup} \label{sec:prob_setup}
\subsection{Notation} \label{subsec:notation}
For any vector $x \in \matR^n$, $\norm{x}_p$ denotes the usual $\ell_p$ norm of $x$. For $X \in \matR^{n \times m}$, we denote \rev{the spectral and Frobenius norms of $X$ by $\norm{X}_2$ and $\norm{X}_F$ respectively}, while $\dotprod{X}{Y} = \Tr(X^\top Y)$ denotes the inner product between $X$ and $Y$ (with $\Tr(\cdot)$ denoting the trace operator). For $n \times n$ matrices X with eigenvalues $\lambda_i \in \mathbb{C}$, we denote \rev{the spectral radius of $X$ by $\rho(X) := \max_{i=1,\dots,n} \abs{\lambda_i}$}. Also, $\kappa(X) (\geq 1)$ denotes the condition number of $X$ (ratio of largest and smallest singular values).
For a positive definite matrix $B$, and a vector $x$, we will denote $$\norm{x}_{B} := \sqrt{x^\top B x} = \norm{B^{1/2} x}_2.$$

We denote the identity matrix by $I_n$. Also, $\vect(X) \in \matR^{nm}$ is formed by stacking the columns of $X$. The symbol $\otimes$ denotes the Kronecker product between matrices. 

For $a,b > 0$, we say $a \lesssim b$ if there exists a constant $C > 0$ such that $a \leq C b$. If $a \lesssim b$ and $a \gtrsim b$, then we write $a \asymp b$.  The values of symbols used for denoting constants (e.g., $c, C, c_1$ etc.) may change from line to line. It will sometimes be convenient to use asymptotic notation. For non-negative functions $f(x), g(x)$, we write   
\begin{itemize}
\item $f(x) = O(g(x))$ if there exists $c > 0$ and $x_0$ such that $f(x) \leq c g(x)$ for $x \geq x_0$;

\item $f(x) = \Omega(g(x))$ if $g(x) = O(f(x))$;

\item $f(x) = \Theta(g(x))$ if there exists $c_1, c_2 > 0$ and $x_0$ such that $c_1 g(x) \leq f(x) \leq c_2 g(x)$ for $x \geq x_0$.
\end{itemize}
Furthermore, we write $f(x) = o(g(x))$ if $\lim_{x \rightarrow \infty} \frac{f(x)}{g(x)} = 0$, and $f(x) = \omega(g(x))$ if $\lim_{x \rightarrow \infty} \frac{f(x)}{g(x)} = \infty$.

Recall that the subgaussian norm of a random variable $X$ is given by 
$$\norm{X}_{\psi_2}:= \sup_{p \geq 1} p^{-1/2} (\expec \abs{X}^p)^{1/p},$$ see for e.g. \citep{vershynin_2012} for other equivalent definitions. When $X$ is centered, we will say that it is $R$-subgaussian if 
\begin{equation*}
    \expec[\exp(\beta X)] \leq \exp\Big(\frac{\beta^2 R^2}{2}\Big), \quad \forall \ \beta \in \matR.
\end{equation*}
This means that $\norm{X}_{\psi_2} \leq c R$ for some universal constant $c > 0$ \citep[Definition 5.7]{vershynin_2012}. 

Finally, we denote \rev{the indicator function for an event $\calB$ by $\indic_{\calB}$}.

%------------
% Setup
%------------
\subsection{Setup} \label{subsec:setup}
Let $A^*_1,\dots,A^*_m \in \matR^{d \times d}$ be unknown system matrices. For each $l \in [m]$, consider the linear dynamical system (LDS) in \eqref{eq:mult_linsys}
%
%\begin{equation} \label{eq:mult_linsys}
%    x_{l,t+1} = A^*_{l} x_{l,t} + \eta_{l,t+1}; \quad t=0,\dots,T, \ x_{l,0} = 0,
%\end{equation}
%
where $x_{l,t}, \eta_{l,t} \in \matR^{d}$. Given the samples $(x_{l,1},\dots,x_{l,T+1})$ for each $l \in [m]$, our goal is to estimate $A^*_1,\dots,A^*_m$. 
\begin{assumption} \label{eq:eta_assump}
    We will assume that $\eta_{l,t}$ are i.i.d centered random vectors with independent coordinates (for each $l,t$). Furthermore, each coordinate of $\eta_{l,t}$ is assumed to have unit variance.
\end{assumption}
Further assumptions on the distribution of $\eta_{l,t}$ will be made later when we present our results.
The above setup corresponds to multiple linear dynamical systems, $m$ in total, with the dynamics of the $l$th system governed by \eqref{eq:mult_linsys}. Without further assumptions that in some sense relate the systems to each other, we clearly cannot do better than simply estimating each $A^*_l$ individually from the data $x_{l,1}, \dots,x_{l,T+1}$. To this end, denoting $G = ([m], \calE)$ to be an underlying connected graph, with the state of the $l$th node evolving as per \eqref{eq:mult_linsys}, we will assume that $A^*_1,\dots,A^*_m$ are smooth w.r.t $G$ in the following sense.
\begin{assumption}[Smoothness] \label{assum:smoothness}
For a given $G = ([m], \calE)$, the matrices $A^*_1,\dots,A^*_m$ satisfy \eqref{eq:quad_var_smooth} 
%
%\begin{equation} \label{eq:quad_var_smooth}
 %   \sum_{\set{l,l'} \in \calE} \norm{A^*_l - A^*_{l'}}_F^2 \leq S_m
%\end{equation}
%
for some $S_m \geq 0$.
\end{assumption}
Assumption \ref{assum:smoothness} is a global smoothness assumption on the matrices, and states roughly that $A^*_l$ is close to $A^{*}_{l'}$ (on average) over the edges $\set{l,l'} \in \calE$. Note that $S_m = O(m)$ is always true, however the interesting regime is when $S_m = o(m)$. The LHS of \eqref{eq:quad_var_smooth} corresponds to the quadratic variation of $(A^*_1,\dots,A^*_m)$ w.r.t the Laplacian matrix $L \in \matR^{m \times m}$ of $G$. Indeed, denoting $a^*_l = \vect(A^*_l) \in \matR^{d^2\times 1}$, and $a^* \in \matR^{md^2 \times 1}$ (formed by column-wise stacking of $a^*_l$'s), \eqref{eq:quad_var_smooth} corresponds to 
\begin{equation} \label{eq:vect_form_smooth}
   \sum_{\set{l,l'} \in \calE} \norm{A^*_l - A^*_{l'}}_F^2 = \rev{{(a^*)}^{\top} } (L \otimes I_{d^2}) a^* \leq S_m.
\end{equation}
In what follows, the eigenvalues of $L$ are denoted by $\lambda_1 \geq \cdots \geq \lambda_{m-1} > \lambda_m = 0$, and the corresponding (orthonormal) eigenvectors by $v_1,\dots,v_m$.

%-----------------------------------
% Path graph with holder smoothness
%
\begin{example} \label{ex:path_graph_holder}
For any $1 \leq i,j \leq d$, let each $A^*_{l,i,j}$ (the $(i,j)th$ entry of $A^*_l$) be formed by uniform sampling of a $(M, \beta)$-H\"older continuous function $f^*_{i,j}: [0,1] \rightarrow \matR$, for $M > 0$ and $\beta \in (0,1]$, i.e.,
\begin{equation*}
    A^*_{l,i,j} = f^*_{i,j}(l/m); \quad l=1,\dots,m
\end{equation*}
where $\abs{f^*_{i,j}(x) - f^*_{i,j}(x)} \leq  M \abs{x - y}^{\beta}$ for all $x,y \in [0,1]$. Then with $G$ denoting the path graph on $[m]$ where we recall $\calE = \set{\set{i,i+1}: i=1,\dots,m-1}$, clearly \eqref{eq:quad_var_smooth} is  satisfied with $S_m = M^2 d^2 m^{1-2\beta}$. Hence, assuming $M,d$ are fixed, we have $S_m = O(m^{1-2\beta})$ as $m$ increases. In the Lipschitz case where $\beta = 1$, we arrive at $S_m = O(m^{-1})$.
\end{example}

Our aim is to jointly estimate $(A^*_{l})_{l=1}^m$ under Assumption \ref{assum:smoothness}, and derive conditions under which the estimates $(\est{A}_l)_{l=1}^m$ satisfy the \emph{weak-consistency} condition
\begin{equation} \label{eq:weak_consis}
\text{(MSE)} \quad \frac{1}{m} \sum_{l=1}^m \norm{\est{A}_l - A^*_l}_F^2 \xrightarrow{m \rightarrow \infty} 0.
\end{equation}
This is especially meaningful when $T$ is small, in which case estimating $A^*_l$ solely from its trajectory $(x^*_{l,t})_{t=0}^T$ is not possible. However it is also relevant when $T$ is suitably large, possibly growing mildly with $m$, provided the MSE under joint estimation is smaller than what we would obtain by individual estimation of the system matrices. We will study two estimators for this problem that we proceed to describe next. 
%
%
%-----------------------------------
% Laplacian smoothing
%-----------------------------------
\subsection{Laplacian smoothing} \label{subsec:lapl_smooth_algo}
The first \rev{estimator} is the penalized least-squares (LS) method 
\begin{equation} \label{eq:pen_ls_algo}
    (\est{A}_1,\dots,\est{A}_m) = \argmin{A_1,\dots,A_m \in \matR^{d \times d}} \ \left(\rev{\sum_{l=1}^m \sum_{t=1}^T \norm{x_{l,{t+1}} - A_l x_{l,t}}_2^2} + \lambda \sum_{\set{l,l'} \in \calE} \norm{A_{l} - A_{l'}}_F^2 \right)
\end{equation}
where $\lambda \geq 0$ is a regularization parameter. Denoting $a_l = \vect(A_l)$, we form the vector $a \in \matR^{md^2}$ by column-wise stacking of $a_1,\dots,a_m$. Furthermore, denoting the quantities 
\begin{align*}
    \Xtil_l = [x_{l,2} \cdots x_{l,T+1}], \ X_l = [x_{l,1} \cdots x_{l,T}], \ \xtil_l = \vect(\Xtil_l), 
\end{align*}
form $\xtil \in \matR^{dmT}$ by stacking $\xtil_l$'s, and $Q = \blkdiag(X_l^\top \otimes I_d)_{l=1}^m$. Then, \eqref{eq:pen_ls_algo} can be re-written as 
\begin{equation} \label{eq:vec_pen_ls_algo}
 (\est{a}_1,\dots,\est{a}_m) = \argmin{a_1,\dots,a_m} \ \left(\norm{\xtil - Q a}_2^2 + \lambda a^{\top} (L \otimes I_{d^2}) a \right).
\end{equation}
The above estimator is analogous to the Laplacian-smoothing estimator for denoising smooth signals on a graph (e.g., \citep{sadhanala2016total}).

%-----------------------------
% Subspace-constrained LS estimator
%-----------------------------
\subsection{Subspace-constrained LS} \label{subsec:subs_contr_algo}
To motivate the second estimator, consider for any integer $1 \leq \tau \leq m-1$  
\begin{equation*}
    L \otimes I_{d^2} = \left(\sum_{l=m-\tau+1}^m \lambda_l v_l v_l^{\top} \right) \otimes I_{d^2} + \left( \sum_{l=1}^{m - \tau}\lambda_l v_l v_l^{\top} \right) \otimes I_{d^2}.
\end{equation*}
\rev{Denoting $\Pproj = \sum_{l = m - \tau + 1}^{m} v_l v_l^{\top}$, which is the} projection matrix for the low-frequency part of $L$, we obtain  another projection matrix $\Ptilproj = \Pproj \otimes I_{d^2}$. The condition \eqref{eq:vect_form_smooth} implies that $a^{*} \in \matR^{md^2}$ -- formed by column-wise stacking of $\vect(A^*_1),\dots,\vect(A^*_m)$ -- lies close to the low frequency part of $L \otimes I_{d^2}$. Hence, $a^{*} \approx \Ptilproj a^*$ for a suitable $\tau$ depending on $S_m$. For example, if $S_m = 0$, then $\tau = 1$ and clearly $a^{*} = \Ptilproj a^*$.  

Motivated by the above observation, we define our estimator $\est{a}$ as the solution of 
\begin{equation} \label{eq:proj_est_1}
  \argmin{a \in \text{span}(\Ptilproj)}  \norm{\xtil - Q a}_2^2 \equiv  \argmin{a \in \matR^{md^2 }} \  \norm{\xtil - Q \Ptilproj a}_2^2.
\end{equation}
Since the above objective function has infinitely many minimizers, we define $\est{a}$ as the minimum norm solution of \eqref{eq:proj_est_1} leading to
\begin{equation} \label{eq:proj_est_closed_form}
    \est{a} := [\Ptilproj Q^\top Q \Ptilproj]^{\dagger} \Ptilproj Q^{\top} \xtil 
\end{equation}
where $\dagger$ denotes the pseudo-inverse.
\begin{remark} \label{rem:alt_forms_multlinsys}
    Notice that the model \eqref{eq:mult_linsys} can be written as a single autonomous LDS as
    \begin{equation} \label{eq:alt_forms_multlinsys}
        \begin{bmatrix}
   x_{1,t+1} \\ 
   x_{2,t+1} \\ 
  \vdots \\
  x_{m,t+1}
 \end{bmatrix} =
\underbrace{\begin{bmatrix}
   A_1^* & 0 & \hdots & 0\\ 
  0 & A_2^* & \hdots & 0\\ 
  \vdots  &  & \ddots & \vdots\\
  0 & \hdots & \hdots & A_m^*
 \end{bmatrix}}_{ =: A^*}
\begin{bmatrix}
   x_{1,t} \\ 
   x_{2,t} \\ 
  \vdots \\
  x_{m,t}
\end{bmatrix}
+ 
\begin{bmatrix}
   \eta_{1,t+1} \\ 
   \eta_{2,t+1} \\ 
  \vdots \\
  \eta_{m,t+1}
 \end{bmatrix}.
\end{equation}
where $A^* \in \matR^{md \times md}$ is the unknown matrix to be estimated. In fact, $A^*$ is a structured matrix: its off-block-diagonal entries are zero (which mounts to linear equality constraints) and moreover, it also lies within the ellipsoid specified by the constraint \eqref{eq:quad_var_smooth}. Thus $A^*$ belongs to a closed convex set $\calK \subset \matR^{md \times md}$, and so, one could also estimate it via a $\calK$-constrained least squares estimator (which is a convex program). This could then be analyzed, e.g., following the approach of \cite{tyagi2024learning} for structured LDSs (but would likely lead to weaker results as the analysis therein pertains to general sets $\calK$). The formulation in \eqref{eq:vec_pen_ls_algo} has a closed-form solution which makes it easier to analyze.
\end{remark}

% Main results
%-------------------
% Main results
%-------------------
\section{Main results} \label{sec:main_results}
Recall from \eqref{eq:control_gram} the definition of the controllability Grammian of a LDS with system matrix $A$.

%-------------------
% For penalized LS
%-------------------
\subsection{Laplacian smoothing} \label{subsec:lapl_smoothing_results}
Our main result for the penalized LS estimator is outlined below. The proofs of all results in this section are detailed in Section \ref{sec:laplacian_smooth_analysis}.
\begin{theorem} \label{thm:lapl_smooth_strict_stab}
Under Assumption \ref{eq:eta_assump}, suppose  $\eta_{l,t}$ has $R$-subgaussian entries for each $l,t$. Furthermore, let $G = ([m], \calE)$ be any connected graph. Then there exist constants $c, C, C_1 > 0$ such that the following is true. For $\delta \in (0,e^{-c})$, if 
\begin{equation} \label{eq:T_cond_Lapsmooth_strict_stab}
    T \geq C\max \set{R^4 \left(d + \log\left(\frac{m}{\delta} \right) \right), R^2 d\max_{l \in [m]} \set{\log(\Tr(\Gamma_T(A_l^*) - I) + 1) + \log\left(\frac{m}{\delta} \right)}}
\end{equation}
we have w.p (with probability) at least $1-3\delta$ that the solution of \eqref{eq:pen_ls_algo} is unique, and satisfies
\begin{align} \label{eq:lapl_smooth_mainbd}
    \frac{1}{m} \sum_{l=1}^m \norm{\est{A}_l - A_l^*}_F^2 &\leq \frac{C_1}{m}\Bigg(\frac{\lambda^2}{T^2} \norm{(L \otimes I_{d^2}) a^*}_2^2 + \frac{R^2}{T} \left[ \log\left(\frac{1}{\delta}\right) + d^2 \log\left( 1 + \frac{\gamma_1 (T,m,\delta)}{T}\right) \right] \nonumber \\
    &+ \frac{R^2 d^2 \gamma_1 (T,m,\delta)}{T} \left[ \sum_{l=1}^{m-1} \frac{1}{4\lambda \lambda_l + T} \right] \Bigg),
\end{align}
where 
\begin{equation} \label{eq:gamma_def_laplsmooth_strict_stab}
    \gamma_1(T,m,\delta) := \left(1 + R^2 \log\left(\frac{m}{\delta} \right) \right) \max_{l \in [m]} \set{\sum_{t=0}^{T-1} \Tr (\Gamma_t(A_l^*))}.
\end{equation}
\end{theorem}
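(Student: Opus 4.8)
Write $L_{d^2}:=L\otimes I_{d^2}$ and $M:=Q^\top Q+\lambda L_{d^2}$; then \eqref{eq:vec_pen_ls_algo} is an unconstrained quadratic with Hessian $2M$, so whenever $M\succ0$ the minimizer is unique and equals $\est a=M^{-1}Q^\top\xtil$. Plugging in $\xtil=Qa^*+e$ from \eqref{eq:mult_linsys} (with $e$ the column-wise stacking of the vectorized noise blocks $\vect([\eta_{l,2}\ \cdots\ \eta_{l,T+1}])$) gives $\est a-a^*=M^{-1}(Q^\top e-\lambda L_{d^2}a^*)$, and from $\norm{M^{-1}v}_2^2\le\norm{M^{-1}}_2\,v^\top M^{-1}v$ the MSE $=\frac1m\norm{\est a-a^*}_2^2$ is at most $\frac{2}{m}\norm{M^{-1}}_2\big(e^\top QM^{-1}Q^\top e+\lambda^2(a^*)^\top L_{d^2}M^{-1}L_{d^2}a^*\big)$. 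The plan is therefore to control $\norm{M^{-1}}_2$, the bias term (the second summand), and the variance term (the first summand).

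The first two are routine. I would establish a \emph{uniform persistence-of-excitation} event $\calG_1$ of probability $\ge1-\delta$ on which $X_lX_l^\top\succeq c_1T\,I_d$ for every $l\in[m]$; then $Q^\top Q=\blkdiag(X_lX_l^\top\otimes I_d)_l\succeq c_1T\,I$, so $M\succeq\lambda L_{d^2}+c_1T\,I=:W\succ0$ (hence the solution is unique), $\norm{M^{-1}}_2\le(c_1T)^{-1}$ and $M^{-1}\preceq W^{-1}$. The ingredient is the single-system Grammian lower bound behind \eqref{eq:sarkar_rakhlin_LDS_bd}: since $\expec[X_lX_l^\top]=\sum_{t=0}^{T-1}\Gamma_t(A_l^*)\succeq T\,I_d$, the small-ball arguments of \citep{Sarkar19,Simchowitz18a} give $X_lX_l^\top\succeq c_1T\,I_d$ w.p.\ $\ge1-\delta/m$ once $T$ exceeds the bound in \eqref{eq:T_cond_Lapsmooth_strict_stab} --- the $R^4(d+\log(m/\delta))$ term coming from sub-exponential concentration of the empirical covariance, the $R^2d(\log(\Tr(\Gamma_T(A_l^*)-I)+1)+\log(m/\delta))$ term from the dependence of the small-ball threshold on the trajectory's size --- and a union bound over $l$ yields $\calG_1$. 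The bias is then immediate: on $\calG_1$, $(a^*)^\top L_{d^2}M^{-1}L_{d^2}a^*\le\norm{M^{-1}}_2\norm{L_{d^2}a^*}_2^2$, so with the prefactor $\frac2m\norm{M^{-1}}_2\le\frac{2}{mc_1T}$ it contributes $\lesssim\frac1m\frac{\lambda^2}{T^2}\norm{(L\otimes I_{d^2})a^*}_2^2$, the first term of \eqref{eq:lapl_smooth_mainbd} (using the sharper $\norm{L_{d^2}a^*}_2^2$ rather than only $(a^*)^\top L_{d^2}a^*\le S_m$).

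The variance term $e^\top QM^{-1}Q^\top e$ is the crux. Here $0\preceq QM^{-1}Q^\top\preceq I$, so were $e$ independent of $M$ a Hanson--Wright bound would give $e^\top QM^{-1}Q^\top e\lesssim\Tr(M^{-1}Q^\top Q)+R^2\log\frac1\delta$; but it is not, since $X_l$ and the noise block share the increments $\eta_{l,s}$. Instead I would apply a self-normalized martingale tail bound (the tool of \citep{Sarkar19}) to $Q^\top e$, whose $l$-block is the martingale sum $\sum_{t=1}^T x_{l,t}\otimes\eta_{l,t+1}$, preconditioned by the deterministic matrix $W$ (legitimate on $\calG_1$, where $M\succeq W$, so $e^\top QM^{-1}Q^\top e\le\norm{W^{-1/2}Q^\top e}_2^2$); this should give, w.p.\ $\ge1-\delta$,
\[
\norm{W^{-1/2}Q^\top e}_2^2\;\lesssim\;R^2\Big(\Tr(W^{-1}Q^\top Q)+\log\tfrac1\delta+d^2\log\big(1+\tfrac{\gamma_1(T,m,\delta)}{T}\big)\Big),
\]
i.e.\ the Hanson--Wright-type bound with an overall $R^2$ from the sub-Gaussian tail and an additive self-normalized ``effective-dimension'' correction $d^2\log(1+\gamma_1/T)$. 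On a further event $\calG_2$ of probability $\ge1-\delta$ one has $\max_l\norm{X_l}_F^2\lesssim\gamma_1(T,m,\delta)$ (sub-Gaussian concentration of trajectory norms, via $\expec\norm{X_l}_F^2=\sum_{t=0}^{T-1}\Tr\Gamma_t(A_l^*)$ and a union bound over $l$), whence $\Tr(W^{-1}Q^\top Q)\le\norm{Q^\top Q}_2\Tr(W^{-1})\lesssim d^2\gamma_1(T,m,\delta)\sum_{l=1}^{m}(4\lambda\lambda_l+T)^{-1}$, using $\Tr(W^{-1})=d^2\sum_l(\lambda\lambda_l+c_1T)^{-1}$ with $c_1=\tfrac14$. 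The $\lambda_m=0$ summand (the all-nodes-average direction, on whose span the penalty is inactive) is of lower order and is peeled off into the $\tfrac{R^2}{T}[\log\tfrac1\delta+d^2\log(1+\gamma_1/T)]$ term --- using the $1/m$ variance reduction from averaging $m$ independent trajectories --- which is why the surviving sum runs only over $l\le m-1$. Multiplying the variance bound by $\frac2m\norm{M^{-1}}_2\le\frac{2}{mc_1T}$ reproduces the last two terms of \eqref{eq:lapl_smooth_mainbd}, and a union bound over $\calG_1$, $\calG_2$ and the martingale event yields probability $1-3\delta$.

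The main obstacle is this variance step. The dependence between regressors and noise forces a self-normalized (Abbasi--Yadkori / Freedman type) martingale inequality rather than a direct quadratic-form bound; and to keep the smoothing gain --- the sum $\sum_{l=1}^{m-1}(4\lambda\lambda_l+T)^{-1}$ instead of the crude $\sum_l T^{-1}$ that the lossy $M^{-1}\preceq(Q^\top Q)^{-1}$ would yield --- the concentration must be carried out in the Laplacian-weighted geometry of $W=\lambda L_{d^2}+c_1TI$, which is \emph{not} simultaneously diagonalizable with the block-diagonal $Q^\top Q$, and whose flat null direction must additionally be isolated and handled by averaging. Reconciling the eigenbasis of $L$ with the per-node block structure, while keeping the high-probability fluctuation down to $\log(1/\delta)+d^2\log(1+\gamma_1/T)$, is the technical heart; a secondary difficulty, inherited from \citep{Sarkar19}, is the uniform Grammian lower bound of the second step for merely \emph{marginally} stable systems, whose trajectories can grow with $T$.
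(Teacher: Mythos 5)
Your proposal follows essentially the same route as the paper: the same bias/variance split of $\est a-a^*$ through $M^{-1}=(Q^\top Q+\lambda L\otimes I_{d^2})^{-1}$, the same uniform excitation event $X_lX_l^\top\succeq cT I_d$ from \citep{Sarkar19} with a union bound over $[m]$ (giving uniqueness and $\norm{M^{-1}}_2\lesssim 1/T$), the same Abbasi--Yadkori self-normalized martingale bound for the variance, and the same upper L\"owner bound $Q^\top Q\preceq\gamma_1 I$ (via Hanson--Wright on trajectory norms) to control the resulting log-determinant, with the $\lambda_m=0$ direction producing the $\frac{R^2}{mT}[\log\frac1\delta+d^2\log(1+\gamma_1/T)]$ term.

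One step is stated imprecisely: the self-normalized inequality controls $\norm{Q^\top e}^2_{(Q^\top Q+W)^{-1}}$, not $\norm{W^{-1/2}Q^\top e}_2^2$ (the latter is larger, and your displayed bound for it does not follow from the martingale inequality --- in the scalar analogue it would assert $s^2\lesssim R^2(q+w\log\frac1\delta)$ where only $s^2\lesssim R^2(q+w)\log\frac{q+w}{w\delta}$ is available). The fix is exactly the ingredient you already have: on the excitation event $Q^\top Q\succeq cTI$ implies $M^{-1}\preceq 2(Q^\top Q+W)^{-1}$ with $W=\lambda L\otimes I_{d^2}+cTI$, which is what the paper uses, and then the rest of your chain (with $\log\det\le\Tr$) goes through unchanged. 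Also, the surviving sum over $l\le m-1$ comes simply from the determinant of the null-eigenvalue block of $W$, not from any ``variance reduction by averaging $m$ trajectories''.
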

The proof is motivated by ideas in the proof of \cite[Theorem 1]{Sarkar19} for learning marginally stable linear dynamical systems. \rev{The following points are useful to note.}
\begin{itemize}
\item The first error term in \eqref{eq:lapl_smooth_mainbd} corresponds to the bias (\rev{which increases as $\lambda$ increases}), while the remaining two terms therein constitute the variance (\rev{which decrease as $\lambda$ increases}). \rev{The bias-variance trade-off will be achieved by choosing $\lambda$ to minimize the sum of these terms.} 

\item The condition on $T$ in \eqref{eq:T_cond_Lapsmooth_strict_stab} is an artefact of the analysis, and arises from Lemma \ref{lem:laplace_smooth_qq_lowbd} in order to ensure $Q^\top Q + \lambda (L \otimes I_{d^2}) \succ 0$. In general, obtaining lower bounds on the eigenvalues of  
$Q^\top Q + \lambda (L \otimes I_{d^2})$ is challenging as the eigenspaces of $L \otimes I_{d^2}$ and $Q^\top Q$ are not aligned. Our strategy is the admittedly naive approach that instead seeks to ensure $Q^\top Q \succ 0$ using results from \cite[Section 9]{Sarkar19}.
\end{itemize}
It will be instructive now to explicitly consider each $A^*_l$ to be marginally stable, i.e., 
\begin{equation} \label{eq:stable_mat_specrad}
    \rho(A^*_l) \leq 1 + \frac{c}{T}; \quad l=1,\dots,m,
\end{equation}
for some constant $c > 0$, and to also consider particular choices of graphs $G$. This would enable us to bound the terms $\Tr(\Gamma_T(A_l^*))$ (and hence $\gamma_1(T,m,\delta)$), and also illustrate choosing $\lambda$ appropriately in order to balance the bias-variance terms such that $\frac{1}{m} \sum_{l=1}^m \norm{\est{A}_l - A_l^*}_F^2 = o(1)$ as $m \rightarrow \infty$.
\begin{remark} \label{rem:stab_assump_mats}
    Note that Theorem \ref{thm:lapl_smooth_strict_stab} does not require a stability condition on $A^*_l$'s. Also, \eqref{eq:stable_mat_specrad} is equivalent to saying that the matrix $A^*$ in Remark \ref{rem:alt_forms_multlinsys} is marginally stable. If $A^*_l$'s are allowed to be explosive, i.e., have eigenvalues outside the unit circle, then a different analysis would be required, as detailed in \citep{Sarkar19}. In fact, it would be interesting to verify whether Laplacian smoothing is weakly consistent even if some of the $A^*_l$'s are irregular.
\end{remark}
Before proceeding, let $A^*_l = P_l^{-1} \Lambda_l P_l$ be the Jordan Canonical Form of $A^*_l$, and recall that $\kappa(\cdot)$ denotes the condition number of a matrix. Then as a consequence of Proposition \ref{prop:stable_mat_trace_bound}, we have for any $T \geq 2$ that 
\begin{equation} \label{eq:trace_gamma_bd_stabmat}
    \Tr (\Gamma_T(A_l^*)) \lesssim d T^{\alpha_l} \kappa^2(P_l)
\end{equation}
for some $\alpha_l \in [2d]$ which is proportional to the size of the largest Jordan block of $A^*_l$. For instance, if $A^*_l$ is symmetric, then we have $\alpha_l = 1 = \kappa(P_l)$. It will be useful to define the quantities
\begin{equation} \label{eq:alpha_kappa_defs_stabmat}
    \alpha := \max_{l=1,\dots,m} \alpha_l, \quad \kappa_{\max} := \max_{l=1,\dots,m} \kappa(P_l)
\end{equation}
We then obtain the following corollary of Theorem \ref{thm:lapl_smooth_strict_stab} when $G$ is the path graph and each $A^*_l$ is marginally stable.
%
%
%---------------------------------------------------
% Corollary of Laplacian smoothing for path graph
%---------------------------------------------------
\begin{corollary}[Path graph] \label{corr:lapl_smooth_strict_stab}
Under Assumptions \ref{eq:eta_assump} and \ref{assum:smoothness}, suppose  $\eta_{l,t}$ has $R$-subgaussian entries for each $l,t$. Furthermore, let $G = ([m], \calE)$ be the path graph and choose $\lambda = (Rd)^{4/5} (\frac{m}{S_m})^{2/5} T^{1/5}$. Then there exist constants $c, C, C_1, C_2, C_3 > 0$ such that the following is true. 
\begin{enumerate}
\item For $\delta \in (0,e^{-c})$, if
$T$ satisfies \eqref{eq:T_cond_Lapsmooth_strict_stab}, then $w.p$ at least $1-3\delta$ the solution of \eqref{eq:pen_ls_algo} is unique, and satisfies
\begin{align} \label{eq:lapl_smooth_path_graph_mainbd_1}
    \frac{1}{m} \sum_{l=1}^m \norm{\est{A}_l - A_l^*}_F^2 
    &\leq 
    C_1\Bigg(\Big(\frac{S_m}{m} \Big)^{1/5} \frac{(Rd)^{8/5} \gamma_1 (T,m,\delta)}{T^{8/5}} \nonumber \\ 
    &+ \frac{R^2}{m T} \left[ \log\left(\frac{1}{\delta}\right) + d^2 \log\left( 1 + \frac{\gamma_1 (T,m,\delta)}{T}\right) \right] \Bigg).
\end{align}
Here, $\gamma_1(T,m,\delta) $ is as in \eqref{eq:gamma_def_laplsmooth_strict_stab}.

\item Suppose each $A^*_l$ is marginally stable, i.e. satisfies \eqref{eq:stable_mat_specrad}. Then \eqref{eq:T_cond_Lapsmooth_strict_stab} is ensured provided 
\begin{equation} \label{eq:T_cond_laplsmooth_corr_path_stab_mat}
T \geq C_2 R^4 d \log\Big(\frac{d T^{\alpha} \kappa^2_{\max} m}{\delta}\Big),
\end{equation}
with $\alpha, \kappa_{\max}$ as in \eqref{eq:alpha_kappa_defs_stabmat}. Furthermore,  \eqref{eq:lapl_smooth_path_graph_mainbd_1} simplifies to
\begin{align} \label{eq:lapl_smooth_path_graph_mainbd_2}
  \frac{1}{m} \sum_{l=1}^m \norm{\est{A}_l - A_l^*}_F^2 
    &\leq 
    C_3 \Bigg(\Big(\frac{S_m}{m} \Big)^{1/5} (R)^{18/5} d^{\frac{13}{5}} \log\Big(\frac{m}{\delta} \Big) T^{\alpha-\frac{3}{8}} \kappa^2_{\max} \nonumber \\ 
    &+ \frac{R^2}{m T} \Big[ \log\left(\frac{1}{\delta}\right) + d^2 \log\left( d R^2 \log\Big(\frac{m}{\delta}\Big) T^{\alpha} \kappa^2_{\max} \right) \Big] \Bigg).
\end{align}
\end{enumerate}
\end{corollary}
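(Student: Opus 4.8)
The plan is to derive both parts of the corollary directly from Theorem~\ref{thm:lapl_smooth_strict_stab} by specializing the three terms of the bound \eqref{eq:lapl_smooth_mainbd} to the path graph and then inserting the prescribed $\lambda$. Theorem~\ref{thm:lapl_smooth_strict_stab} and the condition \eqref{eq:T_cond_Lapsmooth_strict_stab} already do all of the probabilistic work, so what remains is essentially deterministic bookkeeping driven by two elementary facts about the path-graph Laplacian.

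For part~1, the first fact is $\lambda_{\max}(L)\le 4$; since $L\otimes I_{d^2}$ is positive semidefinite with the same nonzero spectrum, $\norm{(L\otimes I_{d^2})a^*}_2^2\le \lambda_{\max}(L)\,(a^*)^\top(L\otimes I_{d^2})a^*\le 4 S_m$ by \eqref{eq:vect_form_smooth}, which controls the bias term of \eqref{eq:lapl_smooth_mainbd}. The second fact is the growth of the spectrum near $0$: the nonzero eigenvalues are $\lambda_{m-j}=2-2\cos(\pi j/m)\ge 4j^2/m^2$ for $1\le j\le m-1$, so that, writing $b^2=Tm^2/(16\lambda)$,
\begin{equation*}
\sum_{l=1}^{m-1}\frac{1}{4\lambda\lambda_l+T}\;\le\;\sum_{j\ge1}\frac{1}{16\lambda j^2/m^2+T}\;=\;\frac{m^2}{16\lambda}\sum_{j\ge1}\frac{1}{j^2+b^2}\;\le\;\frac{m^2}{16\lambda}\cdot\frac{\pi}{2b}\;\lesssim\;\frac{m}{\sqrt{\lambda T}},
\end{equation*}
where the penultimate inequality compares the decreasing summand to $\int_0^\infty (x^2+b^2)^{-1}dx=\pi/(2b)$. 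Hence the third term of \eqref{eq:lapl_smooth_mainbd} is $\lesssim R^2 d^2\gamma_1/(\sqrt\lambda\,T^{3/2})$; substituting $\lambda=(Rd)^{4/5}(m/S_m)^{2/5}T^{1/5}$ into this, and also into the bias term $4C_1\lambda^2 S_m/(mT^2)$, both collapse to a constant times $(S_m/m)^{1/5}(Rd)^{8/5}T^{-8/5}$ (the variance piece additionally carrying $\gamma_1\ge 1$, so the bias is dominated), which is the first term of \eqref{eq:lapl_smooth_path_graph_mainbd_1}; the middle term of \eqref{eq:lapl_smooth_mainbd} is left untouched and is the second term of \eqref{eq:lapl_smooth_path_graph_mainbd_1}. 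The exponent $\lambda\propto(m/S_m)^{2/5}T^{1/5}$ is, up to $\gamma_1$-factors, exactly what minimizes bias-plus-variance, which is how one arrives at the choice.

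For part~2, marginal stability \eqref{eq:stable_mat_specrad} together with \eqref{eq:trace_gamma_bd_stabmat} (a consequence of Proposition~\ref{prop:stable_mat_trace_bound}) gives $\Tr(\Gamma_t(A^*_l))\lesssim d\,t^{\alpha_l}\kappa^2(P_l)\le d\,t^{\alpha}\kappa_{\max}^2$ for all $t\le T$ (the case $t=0$ being trivial, as $\Gamma_0=I$), where $\alpha,\kappa_{\max}$ are as in \eqref{eq:alpha_kappa_defs_stabmat}. Summing over $t=0,\dots,T-1$ yields $\sum_t\Tr(\Gamma_t(A_l^*))\lesssim d\,\kappa_{\max}^2\,T^{\alpha+1}$, hence from \eqref{eq:gamma_def_laplsmooth_strict_stab}, $\gamma_1(T,m,\delta)\lesssim R^2\log(m/\delta)\,d\,\kappa_{\max}^2\,T^{\alpha+1}$, while $\Tr(\Gamma_T(A_l^*)-I)+1\le\Tr(\Gamma_T(A_l^*))\lesssim d\,T^\alpha\kappa_{\max}^2$. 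Feeding the last bound into \eqref{eq:T_cond_Lapsmooth_strict_stab} and merging the two terms of the maximum (pulling out $R^4 d$ and using that the logarithm is bounded below by a constant and $R\ge 1$ WLOG) shows \eqref{eq:T_cond_laplsmooth_corr_path_stab_mat} suffices. Finally, inserting the $\gamma_1$-bound into \eqref{eq:lapl_smooth_path_graph_mainbd_1}, collecting the powers of $R,d,T$ together with the factor $\kappa_{\max}^2\log(m/\delta)$ in the leading term, and simplifying $\log(1+\gamma_1/T)$ in the other term, produces \eqref{eq:lapl_smooth_path_graph_mainbd_2}.

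The only genuinely non-routine step is the spectral-sum estimate $\sum_l(4\lambda\lambda_l+T)^{-1}\lesssim m/\sqrt{\lambda T}$: it relies on the sharp near-zero behaviour $\lambda_{m-j}\asymp(j/m)^2$ of the path Laplacian plus the integral comparison, and it is this $1/\sqrt{\lambda T}$ scaling — rather than the trivial $m/T$ — that lets the prescribed $\lambda$ deliver an MSE vanishing in both $m$ and $T$. Everything else is constant-tracking; the main care needed there is with the side conditions ($R\ge 1$, $\delta<e^{-c}$, $d\ge 1$, $S_m>0$, and $\gamma_1\gtrsim 1$) used to absorb lower-order terms when passing from \eqref{eq:lapl_smooth_path_graph_mainbd_1} to \eqref{eq:lapl_smooth_path_graph_mainbd_2}.
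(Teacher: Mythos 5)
Your proof is correct and follows essentially the same route as the paper: bound the bias term via $\lambda_{\max}(L)\le 4$, bound $\sum_{l=1}^{m-1}(4\lambda\lambda_l+T)^{-1}\lesssim m/\sqrt{\lambda T}$ by integral comparison using the quadratic growth $\lambda_{m-j}\gtrsim j^2/m^2$ of the path-Laplacian spectrum near zero (the paper phrases this as a Riemann sum evaluating to an $\arctan$, you as a series compared to $\int_0^\infty(x^2+b^2)^{-1}dx$ --- same estimate), plug in the prescribed $\lambda$, and for part~2 invoke Proposition~\ref{prop:stable_mat_trace_bound} to control $\gamma_1$ and the condition on $T$. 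Your bookkeeping in fact yields the exponent $T^{\alpha-3/5}$ in \eqref{eq:lapl_smooth_path_graph_mainbd_2}, which implies the stated $T^{\alpha-3/8}$ since $T\ge 1$.
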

%
%
%\todo{Add discussion on corollary}
The following comments are in order.
\begin{enumerate}
\item \rev{The $T^{\alpha}$ term appearing in the numerator of the first term in \eqref{eq:lapl_smooth_path_graph_mainbd_2} is essentially due to the $\gamma_1(T,m,\delta)$ term appearing outside the logarithm in the third term of \eqref{eq:lapl_smooth_mainbd}. This is an artefact of the analysis, and it is unclear how to improve this for the case of marginally stable matrices. But when $A^*_{l}$'s satisfy a stricter stability condition where $\norm{A^*_l}_2 < 1$ for each $l$, then this issue disappears. See Remark \ref{rem:exp_dep_T} below for details.}

  \item The term $(\frac{S_m}{m})^{1/5}$ is the non-parametric rate while the $\tilde{O}(1/m)$ term in \eqref{eq:lapl_smooth_path_graph_mainbd_1} is the parametric rate. The exponent $1/5$ is not optimal -- we believe this should be $1/3$ based on the results in \cite{sadhanala2016total} for $1$-d grids. The reason for the sub-optimal exponent is that the bias term scales quadratically with $\lambda$ in \eqref{eq:lapl_smooth_mainbd}, which in turn is linked with the fact that the eigenspaces of $Q^\top Q$ and $L \otimes I_{d^2}$ are not aligned (as remarked earlier). This leads to a sub-optimal estimate of the bias error in Lemma \ref{lem:bd_laplsmooth_e_1}.
  %the way we lower bound the eigenvalues of $Q^\top Q + \lambda (L \otimes I_{d^2})$ using Lemma \ref{lem:laplace_smooth_qq_lowbd}, as remarked earlier. 
  
    \item Treating $R, \kappa_{\max}$ as absolute constants for simplicity, we see that if the trajectory-length of each node is $T = \Omega(d \log(\frac{dmT^{\alpha}}{\delta}))$ then the MSE is 
    $$O\Bigg(\Big(\frac{S_m}{m}\Big)^{1/5} d^{13/5} T^{\alpha} \log\Big(\frac{m}{\delta} \Big) + \frac{1}{mT} \Big[ d^2 \log\Big(d\log\Big(\frac{m}{\delta}\Big) T^{\alpha}\Big)\Big] \Bigg).$$
    Hence if \rev{$S_m = O(m^{1-\epsilon})$} for some constant $\epsilon \in (0,1)$, then we require $m = \Omega(\poly(d,T^{\alpha}))$ to drive the MSE below a given threshold. Recall that $\alpha \in [2d]$, and depends on the size of the largest Jordan blocks of $A^*_l$'s. If for each $l$, the largest Jordan block of $A^*_l$ is of constant size, then $\alpha \asymp 1$. But in case $\alpha \asymp d$, then $m$ will grow exponentially with $d$ in our bounds. 
\end{enumerate}
The following corollaries of Theorem \ref{thm:lapl_smooth_strict_stab} are obtained for the setting of a complete graph and a star graph (with each $A^*_l$ marginally stable).
%
%--------------------------------
% Corollary for complete graph
%--------------------------------
%
\begin{corollary}[Complete and Star graph] \label{corr:lapl_smooth_strict_stab_compl_star}
Suppose each $A^*_l$ is marginally stable, \rev{i.e.}, satisfies \eqref{eq:stable_mat_specrad}, and recall $\alpha,\kappa_{\max}$ in \eqref{eq:alpha_kappa_defs_stabmat}. Under Assumptions \ref{eq:eta_assump} and \ref{assum:smoothness}, suppose  $\eta_{l,t}$ has $R$-subgaussian entries for each $l,t$. 
\begin{enumerate}
\item (Complete graph) Let $G = ([m], \calE)$ be the complete graph with $\calE = \set{\set{i,j}: i \neq j \in [m]}$ and choose $\lambda =  (\frac{T R^2 d^2}{m S_m})^{1/3}$. There exist constants $c, C > 0$ such that for $\delta \in (0,e^{-c})$, if
$T$ satisfies \eqref{eq:T_cond_laplsmooth_corr_path_stab_mat}, then $w.p$ at least $1-3\delta$ the solution of \eqref{eq:pen_ls_algo} is unique, and satisfies
%
%\begin{align} \label{eq:lapl_smooth_compl_graph_mainbd_1}
%    \frac{1}{m} \sum_{l=1}^m \norm{\est{A}_l - A_l^*}_F^2 
    %
%    &\leq 
    %
 %   C \Bigg(\Big(\frac{S_m^{1/3}}{m^{2/3}} \Big) \Big(\frac{Rd}{T} \Big)^{4/3} \gamma_1(T,m,\delta) \nonumber \\ 
%    &+ \frac{R^2}{m T} \left[ \log\left(\frac{1}{\delta}\right) + d^2 \log\left( 1 + \frac{\gamma_1 (T,m,\delta)}{T}\right) \right] \Bigg).
%\end{align}
%
\begin{align} \label{eq:lapl_smooth_compl_graph_mainbd_1}
    \frac{1}{m} \sum_{l=1}^m \norm{\est{A}_l - A_l^*}_F^2 
    &\leq 
    C \Bigg( R^{10/3} d^{7/3} T^{\alpha-\frac{1}{3}}  \kappa^2_{\max} \Big(\frac{S_m^{1/3}}{m^{2/3}} \Big) \log(\frac{m}{\delta}) \nonumber \\ 
    &+ \frac{R^2}{m T} \Big[ \log\left(\frac{1}{\delta}\right) + d^2 \log\left( d R^2 \log\Big(\frac{m}{\delta}\Big) T^{\alpha} \kappa^2_{\max} \right) \Big] \Bigg).
\end{align}

\item (Star graph) Let $G = ([m], \calE)$ be the star graph with $\calE = \set{\set{1,i}: 2 \leq i \leq m}$, and choose $\lambda =  (\frac{T R^2 d^2 m}{ \tilde{S}_m})^{1/3}$ where 
\begin{equation*}
    \tilde{S}_m := S_m + \rev{m^2 \Big\|A_1^* - \frac{1}{m-1}\sum_{l=2}^{m} A_l^* \Big\|_F^2}.
\end{equation*}
There exist constants $c, C > 0$ such that for $\delta \in (0,e^{-c})$, if
$T$ satisfies \eqref{eq:T_cond_laplsmooth_corr_path_stab_mat}, then $w.p$ at least $1-3\delta$ the solution of \eqref{eq:pen_ls_algo} is unique, and satisfies
%
%\begin{align} \label{eq:lapl_smooth_star_graph_mainbd_1}
%    \frac{1}{m} \sum_{l=1}^m \norm{\est{A}_l - A_l^*}_F^2 
    %
 %   &\leq 
    %
 %   C \Bigg(\Big(\frac{\tilde{S}_m}{m}\Big)^{1/3}  (\frac{Rd}{T})^{4/3} \gamma_1(T,m,\delta) \nonumber \\ 
 %   &+ \frac{R^2}{m T} \left[ \log\left(\frac{1}{\delta}\right) + d^2 \log\left( 1 + \frac{\gamma_1 (T,m,\delta)}{T}\right) \right] \Bigg).
%\end{align}
%
%
\begin{align} \label{eq:lapl_smooth_star_graph_mainbd_1}
    \frac{1}{m} \sum_{l=1}^m \norm{\est{A}_l - A_l^*}_F^2 
    &\leq 
    C \Bigg( R^{10/3} d^{7/3} T^{\alpha-\frac{1}{3}} \kappa^2_{\max} \Big(\frac{\tilde{S}_m}{m}\Big)^{1/3} \log(\frac{m}{\delta})   \nonumber \\ 
    &+ \frac{R^2}{m T} \Big[ \log\left(\frac{1}{\delta}\right) + d^2 \log\left( d R^2 \log\Big(\frac{m}{\delta}\Big) T^{\alpha} \kappa^2_{\max} \right) \Big] \Bigg).
\end{align}
\end{enumerate}
\end{corollary}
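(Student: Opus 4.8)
The plan is to specialize the general bound \eqref{eq:lapl_smooth_mainbd} of Theorem \ref{thm:lapl_smooth_strict_stab} to the complete and star topologies, exactly in the spirit of the path‑graph Corollary \ref{corr:lapl_smooth_strict_stab}. For each graph I would: (i) use marginal stability \eqref{eq:stable_mat_specrad} together with \eqref{eq:trace_gamma_bd_stabmat} (a consequence of Proposition \ref{prop:stable_mat_trace_bound}) to get $\Tr(\Gamma_t(A_l^*)) \lesssim d\,T^{\alpha}\kappa_{\max}^2$ for all $t \le T$, hence $\sum_{t=0}^{T-1}\Tr(\Gamma_t(A_l^*)) \lesssim d\,T^{\alpha+1}\kappa_{\max}^2$, and therefore, from \eqref{eq:gamma_def_laplsmooth_strict_stab}, $\gamma_1(T,m,\delta) \lesssim R^2\log(m/\delta)\,d\,T^{\alpha+1}\kappa_{\max}^2$; the same trace bound gives $\log(\Tr(\Gamma_T(A_l^*)-I)+1) \lesssim \log(d\,T^{\alpha}\kappa_{\max}^2)$, so \eqref{eq:T_cond_laplsmooth_corr_path_stab_mat} implies \eqref{eq:T_cond_Lapsmooth_strict_stab} and Theorem \ref{thm:lapl_smooth_strict_stab} applies (giving in particular uniqueness of the minimizer); (ii) plug in the known Laplacian spectrum to evaluate the resolvent sum $\sum_{l=1}^{m-1}(4\lambda\lambda_l+T)^{-1}$ and to bound the bias quantity $\norm{(L\otimes I_{d^2})a^*}_2^2$; and (iii) substitute the prescribed $\lambda$ and simplify, using $\alpha+1-\tfrac43 = \alpha-\tfrac13$.

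For the bias in step (ii): for the complete graph $L = mI_m - \mathbf 1\mathbf 1^\top = mP$, where $P$ is the projector orthogonal to the all‑ones vector, so $\norm{(L\otimes I_{d^2})a^*}_2^2 = m^2 (a^*)^\top(P\otimes I_{d^2})a^* = m\,(a^*)^\top(L\otimes I_{d^2})a^* \le m S_m$ by \eqref{eq:vect_form_smooth}. For the star graph the Laplacian spectrum is $\lambda_1 = m$, $\lambda_2 = \cdots = \lambda_{m-1} = 1$, $\lambda_m = 0$; writing $b_l := \sum_{i=1}^{d^2}\dotprod{a^*}{v_l\otimes e_i}^2$ one has $\norm{(L\otimes I_{d^2})a^*}_2^2 = \sum_{l=1}^{m-1}\lambda_l^2 b_l = m^2 b_1 + \sum_{l=2}^{m-1}b_l$, while $m b_1 + \sum_{l\ge 2}b_l = (a^*)^\top(L\otimes I_{d^2})a^* \le S_m$ forces $\sum_{l\ge 2}b_l \le S_m$, and $m^2 b_1 \le m^2 p_1(a^*)$ by definition of $p_1(a^*)$; hence $\norm{(L\otimes I_{d^2})a^*}_2^2 \le S_m + m^2 p_1(a^*) = \tilde{S}_m$. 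This is precisely why the refined budget $\tilde{S}_m$ enters the star bound: the crude estimate $m^2 b_1 \le m S_m$ would cost an extra factor of $m$. For the resolvent sum: the complete graph gives $\sum_{l=1}^{m-1}(4\lambda\lambda_l+T)^{-1} = (m-1)/(4\lambda m+T) \le 1/(4\lambda)$, and the star graph gives $(4\lambda m+T)^{-1} + (m-2)/(4\lambda+T) \le m/(2\lambda)$.

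Assembling step (iii): plugging these bounds and the estimate for $\gamma_1$ into \eqref{eq:lapl_smooth_mainbd}, the bias term of \eqref{eq:lapl_smooth_mainbd} is $\lesssim \lambda^2 S_m/T^2$ (complete) resp. $\lambda^2\tilde{S}_m/(mT^2)$ (star), the non‑parametric‑variance term is $\lesssim R^2 d^2\gamma_1/(\lambda m T)$ (complete) resp. $R^2 d^2\gamma_1/(\lambda T)$ (star), and the parametric term is unchanged apart from bounding $\log(1+\gamma_1/T) \lesssim \log(dR^2\log(m/\delta)T^{\alpha}\kappa_{\max}^2)$. The prescribed $\lambda$ is the one that balances the bias against the variance with the factor $\gamma_1$ removed — i.e. $\lambda^2 S_m/T^2 \asymp R^2 d^2/(\lambda m T)$ gives $\lambda = (TR^2 d^2/(mS_m))^{1/3}$ for the complete graph, and $\lambda^2\tilde{S}_m/(mT^2) \asymp R^2 d^2/(\lambda T)$ gives $\lambda = (TR^2 d^2 m/\tilde{S}_m)^{1/3}$ for the star — after which the residual $\gamma_1$ merely multiplies the resulting rate. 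Substituting these values of $\lambda$ together with the trace bound $\gamma_1 \lesssim R^2\log(m/\delta)d\,T^{\alpha+1}\kappa_{\max}^2$ produces the leading terms $R^{10/3}d^{7/3}T^{\alpha-1/3}\kappa_{\max}^2(S_m^{1/3}/m^{2/3})\log(m/\delta)$ and $R^{10/3}d^{7/3}T^{\alpha-1/3}\kappa_{\max}^2(\tilde{S}_m/m)^{1/3}\log(m/\delta)$ of \eqref{eq:lapl_smooth_compl_graph_mainbd_1} and \eqref{eq:lapl_smooth_star_graph_mainbd_1}, with the parametric term as stated.

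The only genuinely non‑routine point is the star‑graph bias computation: because the Laplacian has a single large eigenvalue $\lambda_1 = m$, one must isolate the mass $p_1(a^*)$ of $a^*$ on the corresponding eigenvector to obtain the sharper $\tilde{S}_m$ rather than $mS_m$. One should also check that the dropped lower‑order contributions (the $(4\lambda m+T)^{-1}$ piece in the star sum, and the gap between the full variance and its $\gamma_1$‑free version used for the $\lambda$‑balancing) are indeed lower order for the prescribed $\lambda$, which holds in the regime of large $m$ that is of interest; everything else is bookkeeping of exponents and mirrors the proof of Corollary \ref{corr:lapl_smooth_strict_stab}.
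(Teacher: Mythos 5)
Your proposal is correct and follows essentially the same route as the paper: specialize the Laplacian spectrum to bound $\norm{(L\otimes I_{d^2})a^*}_2^2$ (by $mS_m$ for the complete graph and by $\tilde S_m$ for the star, isolating the mass on $v_1$ exactly as the paper does) and the resolvent sum, balance $\lambda$ against the $\gamma_1$-free bias/variance trade-off, and then insert $\gamma_1 \lesssim dR^2\log(m/\delta)T^{\alpha+1}\kappa_{\max}^2$ from Proposition \ref{prop:stable_mat_trace_bound}. The exponent bookkeeping ($\alpha+1-\tfrac43=\alpha-\tfrac13$, $R^{10/3}$, $d^{7/3}$) and the verification that \eqref{eq:T_cond_laplsmooth_corr_path_stab_mat} implies \eqref{eq:T_cond_Lapsmooth_strict_stab} all match the paper's argument.
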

\rev{Let us interpret the above corollary.}
\begin{itemize}
    \item \rev{The requirement $T = \Omega(d \log(\frac{dmT^{\alpha}}{\delta}))$ (for both  complete and star graphs) is the same as that for a path graph. Also, the error bounds for both the examples contain the term $T^{\alpha}$ in the numerator (as in the case of the path graph) which again is an artefact of the analysis. As mentioned earlier, this issue disappears if $\norm{A^*_l}_2 < 1$ for each $l$. See Remark \ref{rem:exp_dep_T} below.} 

    \item \rev{When $G$ is the complete graph, the interesting regime for the smoothness is $S_m = o(m^2)$. If $S_m = O(m^{2-\epsilon})$ for some 
    $\epsilon$ in  $(0,2)$, then we require $m = \Omega(\poly(d,T^{\alpha}))$ to drive the MSE below any given threshold. As in the case of a path graph, this bound will be exponential in $d$ if $\alpha \asymp d$.}

    \item \rev{When $G$ is the star graph, the term $\tilde{S}_m$ captures the smoothness of the problem. Not surprisingly, $\tilde{S}_m$ is small when a small fraction of the degree $1$ vertices have matrices which are far from $A^*_1$ (the matrix at the ``central'' node). If $\tilde{S}_m = O(m^{1-\epsilon})$ for $\epsilon$ in $(0,1)$, then we again require $m = \Omega(\poly(d,T^{\alpha}))$ (as in the other examples) to make the MSE small.} 
\end{itemize}
%
%
%
% Remark on dependence on $T$
%
\begin{remark}[Dependence on $T$ under stricter stability conditions] \label{rem:exp_dep_T}
\rev{The error bounds in Corollaries \ref{corr:lapl_smooth_strict_stab} and \ref{corr:lapl_smooth_strict_stab_compl_star} (for the marginally stable case) involve the term $T^{\alpha}$ appearing in the numerator of the first term. As discussed earlier, this is an artefact of the analysis. However, in the relatively easier setting\footnote{\rev{which implies each $A^*_l$ is strictly stable as $\rho(A^*_l) \leq \norm{A^*_l}_2$.}} where each $A^*_l$ satisfies $\norm{A^*_l}_2 < 1$, this issue disappears. Indeed, by noting that }
%
%\
\begin{align*}
   \rev{\Tr(\Gamma_T(A^*_l)) = \sum_{k=0}^T \Tr(({A^*_l})^k (({A^*_l})^k)^\top) 
    \leq \sum_{k=0}^T d \norm{({A^*_l})^k (({A^*_l})^k)^\top}_2 \leq d \sum_{k=0}^T\norm{A^*_l}_2^{2k} \leq \frac{d}{1-\norm{A^*_l}_2^2}}
\end{align*}
\rev{and comparing it with \eqref{eq:trace_gamma_bd_stabmat}, we can simply set 
\begin{equation*}
\alpha = 0 \quad \text{and} \quad \kappa^2_{\max} = (1-\max_l \norm{A^*_l}_2^2)^{-1} 
\end{equation*}
in Corollaries \ref{corr:lapl_smooth_strict_stab} and \ref{corr:lapl_smooth_strict_stab_compl_star} to obtain the error bounds in this setup. In particular, notice that the error bounds now go to zero as $T$ increases, which is of course what we would ideally like to have. Moreover, we now require $m = \Omega(\poly(d))$ to drive the MSE below a threshold.}
\end{remark}

\begin{remark} \label{rem:laplsmooth_comp_sing_LDS_bd}
As a sanity check, let us verify that we obtain better bounds than what we would have by naively estimating each $A^*_l$ individually. Indeed, assuming $R$ to be a constant, if \eqref{eq:stable_mat_specrad} holds, then applying the OLS for each node $l$, we obtain using \eqref{eq:sarkar_rakhlin_LDS_bd}, \eqref{eq:trace_gamma_bd_stabmat}, and a simple union bound that with probability at least $1-\delta$,   
\begin{align} \label{eq:sing_LDS_bd}
 \frac{1}{m} \sum_{l=1}^m \norm{\est{A} - A^*_l}_F^2 \lesssim  \frac{d^2}{mT} \sum_{l=1}^m \log\Big(\frac{m(1+\Tr(\Gamma_T(A^*_l)))}{\delta} \Big)  \lesssim \rev{\frac{d^2}{T} \log\Big(\frac{m d T^{\alpha} \kappa_{\max}^2}{\delta} \Big)}, 
\end{align}
provided $T \gtrsim d \max_{l \in [m]}\log(\frac{m[1+\Tr(\Gamma_T(A^*_l))]}{\delta})$. This condition on $T$ is essentially the same as in \eqref{eq:T_cond_Lapsmooth_strict_stab}, and is ensured for \rev{$T = \Omega(d \log(\frac{dmT^{\alpha}}{\delta}))$}. \rev{This latter condition on $T$ is ensured if $m \gtrsim d T^{\alpha}$ and $T = \Omega(d \log(m/\delta))$}. However in the regime  $T = \Theta(\log m)$, the bound in \eqref{eq:sing_LDS_bd} is $\Omega(1)$ w.r.t $m$.
\end{remark}
%
%
%

%-------------------------------------------------
% Results for Subspace-constrained LS estimator
%-------------------------------------------------
\subsection{Subspace-constrained LS} \label{subsec:subspace_const_est_results}
Our analyses will require that the eigenvectors of the Laplacian $L$ of $G$ are sufficiently ``delocalized'', i.e., the mass is sufficiently spread out amongst the entries of each eigenvector. 
\begin{definition} \label{def:deloc_eigvecs}
    We say $G$ is $\theta$-delocalized if there exists an orthonormal collection of eigenvectors $\set{v_l}_{l=1}^{m}$ of $L$ (with eigenvalues $\lambda_1 \geq \cdots \geq \lambda_{m-1} > \lambda_m = 0$) such that
\begin{equation} \label{eq:theta_deloc_cond}
\sum_{i=1}^\tau v_{m-i+1,l}^2 \leq \theta \frac{\tau}{m} \quad \forall \tau,l \in [m].
\end{equation}
\end{definition}
As we will see below, our results will be meaningful for graphs which satisfy \eqref{eq:theta_deloc_cond} with small value of $\theta$, e.g., a constant. This is the case, for instance,  for the path graph for which $\theta = 2$ (see proof of Corollary \ref{corr:subs_constr_strict_stab_path}). 
The following theorem is our main result for the subspace-constrained estimator \eqref{eq:proj_est_1}; the proofs of all results in this section are detailed in Section \ref{sec:subspace_LS_analysis}.
\begin{theorem} \label{thm:thm_subs_constr_strict_stab}
Under Assumptions \ref{eq:eta_assump} and \ref{assum:smoothness}, suppose  $\eta_{l,t}$ has $R$-subgaussian entries for each $l,t$. Furthermore, let $G = ([m], \calE)$ be $\theta$-delocalized. Then there exist constants $c_1,c_2, C, C_1, C_2 > 0$ such that the following is true. For $\delta \in (0,e^{-c_1})$, if 
\begin{equation} \label{eq:Tcond_subs_constr_theorem}
    T \geq C R^2\max \set{\max \set{\sqrt{\frac{\theta^3 \tau^3 d T}{m}} , \frac{\theta^2 \tau^2 d}{m}}, \frac{\theta d \tau^2}{m} \left[\log\left(\frac{1}{\delta}\right) + \log\left(1 + \frac{\theta\tau \gamma_2(m,T)}{mT} \right) \right]  },
\end{equation}
it holds w.p at least $1-2\exp(-c_2 \tau d) - 4 \delta$ that the estimator in \eqref{eq:proj_est_closed_form} satisfies
\begin{align} \label{eq:subs_contr_mainbd}
  \frac{1}{m}\sum_{l=1}^m \norm{\est{A}_l - A_l^*}_F^2 \leq C_1\frac{S_m}{m \lambda_{m-\tau}} \left[ 1 + \frac{\gamma_1^2(T,m,\delta)}{T^2} \right] \indic_{\set{\tau \leq m-1}} 
  + C_2 \frac{R^2 d^2 \tau}{Tm} \log\left(\frac{\theta \tau \gamma_3(m,T)}{\rev{\delta} m T} + \frac{1}{\delta} \right).
\end{align}
Here, $\gamma_1(T,m,\delta)$ is as in \eqref{eq:gamma_def_laplsmooth_strict_stab}, while 
\begin{align} \label{eq:gamma_conds_sub_theorem}
   \gamma_2(m,T) := \sum_{l=1}^m \sum_{t=1}^{T} \Tr(\Gamma_t(A_l^*) - I) \quad \text{ and } \quad \gamma_3(m,T) := \sum_{l=1}^m \sum_{t=0}^{T-1} \Tr(\Gamma_t(A_l^*)). 
\end{align}
\end{theorem}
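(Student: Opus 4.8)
The plan is to analyze the subspace-constrained estimator via the standard bias-variance decomposition, exploiting the closed-form expression in \eqref{eq:proj_est_closed_form}. First I would write $\est{a} - a^* = [\Ptilproj Q^\top Q \Ptilproj]^{\dagger} \Ptilproj Q^\top \xtil - a^*$, and split the error into (i) a \emph{bias} term coming from the fact that $a^*$ does not lie exactly in $\range(\Ptilproj)$, namely the component $(I - \Ptilproj) a^*$, plus (ii) a \emph{variance} term driven by the noise. Using $\xtil = Q a^* + \text{(noise stacked over $l$)}$ and the smoothness relation \eqref{eq:vect_form_smooth}, the bias contribution is controlled by $\norm{(I - \Ptilproj) a^*}_2^2 \leq S_m / \lambda_{m-\tau}$ (since the discarded eigenvalues of $L$ are all $\geq \lambda_{m-\tau}$), which explains the $S_m/(m\lambda_{m-\tau})$ factor; the extra $[1 + \gamma_1^2/T^2]$ factor arises because the bias, after being passed through the (random) operator $[\Ptilproj Q^\top Q \Ptilproj]^{\dagger} \Ptilproj Q^\top Q$, is amplified by the conditioning of $\Ptilproj Q^\top Q \Ptilproj$ on its range.

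Second, I would establish the key spectral lower bound: on the event where $T$ satisfies \eqref{eq:Tcond_subs_constr_theorem}, we have $\Ptilproj Q^\top Q \Ptilproj \succeq c\, T\, \Ptilproj$ (restricted to $\range(\Ptilproj)$), i.e., the smallest nonzero eigenvalue of the restricted Gram matrix is $\Omega(T)$. This is where the $\theta$-delocalization hypothesis enters: because $Q = \blkdiag(X_l^\top \otimes I_d)_l$ is block-diagonal across nodes, the quadratic form $a^\top \Ptilproj Q^\top Q \Ptilproj a$ mixes the per-node Gram matrices $X_l^\top X_l \otimes I_d$ with weights governed by the entries $v_{m-i+1,l}^2$ of the low-frequency eigenvectors; delocalization ensures no single node's trajectory (which could be poorly conditioned for small $T$) dominates, so that one can aggregate the per-node concentration bounds from \cite[Section 9]{Sarkar19} for $X_l^\top X_l$ and obtain a uniform lower bound scaling like $T$. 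The quantities $\gamma_2(m,T)$ and $\gamma_3(m,T)$ — sums over $l$ and $t$ of $\Tr(\Gamma_t(A_l^*))$ — appear precisely as the aggregate "energy" bounds needed to make this concentration argument work with the right failure probability, and the $\tau^2/m, \tau^3/m$ factors in \eqref{eq:Tcond_subs_constr_theorem} reflect the effective dimension $\tau d^2$ of $\range(\Ptilproj)$ spread over $m$ nodes.

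Third, for the variance term I would write the noise part as $[\Ptilproj Q^\top Q \Ptilproj]^{\dagger} \Ptilproj Q^\top \nu$ where $\nu$ is the stacked, whitened noise vector (with $R$-subgaussian entries by hypothesis), and bound its squared norm by $\norm{[\Ptilproj Q^\top Q \Ptilproj]^{\dagger}}_2 \cdot \norm{\Ptilproj Q^\top \nu}_{[\Ptilproj Q^\top Q \Ptilproj]^{\dagger}}^2$, or more directly via a self-normalized martingale bound. Using the $\Omega(T)$ lower bound from the previous step together with a self-normalized tail inequality (of the Abbasi-Yadkori / Peña–Lai–Shao type, as used in \cite{Sarkar19}), the variance is of order $\frac{R^2}{T} \cdot (\text{effective dimension}) \cdot \log(\cdots)$; the effective dimension is $\tau d^2$ (rank of $\Ptilproj$), giving the $\frac{R^2 d^2 \tau}{Tm}$ prefactor after dividing by $m$, and the logarithmic term collects the determinant/trace of the relevant Gram matrix, hence the $\log(\theta\tau\gamma_3/(mT) + 1/\delta)$ factor. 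Finally I would union-bound the bias-event, the spectral-lower-bound event, and the self-normalized-tail event, and collect constants.

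The main obstacle I expect is the spectral lower bound $\Ptilproj Q^\top Q \Ptilproj \succeq cT\Ptilproj$: unlike the single-LDS case, the relevant Gram matrix lives on the $\tau d^2$-dimensional subspace $\range(\Ptilproj)$, which is "entangled" across nodes through the Laplacian eigenvectors, so one cannot simply invoke a per-node result. Making the delocalization condition \eqref{eq:theta_deloc_cond} do exactly the right work — controlling how the low-frequency eigenvector mass distributes the burden of conditioning across nodes, so that the worst-case per-node trajectory energy does not blow up the bound — and doing so with a failure probability that degrades only like $\exp(-c_2\tau d)$ rather than $\exp(-c_2 d)$, is the delicate technical heart of the argument; everything else is a fairly standard (if lengthy) bias-variance bookkeeping exercise combined with off-the-shelf self-normalized concentration.
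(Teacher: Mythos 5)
Your proposal follows essentially the same route as the paper: the same three-term decomposition (the pure projection bias $(I-\Ptilproj)a^*$ bounded by $S_m/\lambda_{m-\tau}$ via Assumption \ref{assum:smoothness}, the bias amplified through $[\Ptilproj Q^\top Q \Ptilproj]^{\dagger}\Ptilproj Q^\top Q$ yielding the $\gamma_1^2/T^2$ factor, and the noise term), the same key spectral lower bound $\Vtau^\top Q^\top Q \Vtau \succeq (T/4)I_{\tau d^2}$ obtained by adapting the arguments of \cite[Section 9]{Sarkar19} to the entangled subspace with the $\theta$-delocalization bound $\norm{f_l}_2^2 \leq \theta\tau/m$ doing exactly the work you describe, and the same Abbasi-Yadkori-type self-normalized bound for the variance followed by a union bound. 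You also correctly identify the delicate point — that the lower bound cannot be reduced to per-node results and is where condition \eqref{eq:Tcond_subs_constr_theorem} and the $\exp(-c_2\tau d)$ failure probability originate — so the proposal matches the paper's proof in both structure and substance.
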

As in Theorem \ref{thm:lapl_smooth_strict_stab}, the proof of Theorem \ref{thm:thm_subs_constr_strict_stab} is also motivated by ideas in the proof of \cite[Theorem 1]{Sarkar19} for learning marginally stable linear dynamical systems. The first error term in \eqref{eq:subs_contr_mainbd} is the bias, while the second term is the variance. The condition in \eqref{eq:Tcond_subs_constr_theorem} arises in order to ensure a lower bound on the smallest eigenvalue of $\Vtau^\top Q^\top Q \Vtau$, see Lemma \ref{lem:eigval_control_mat_subspace_method}. The following points are important to note about this condition when $\theta$ is a constant.
\begin{enumerate}
    \item On one hand, \eqref{eq:Tcond_subs_constr_theorem} imposes very mild requirements on $T$ when $\tau = O(m^{1/3})$, as the RHS of \eqref{eq:Tcond_subs_constr_theorem} can be controlled, for large enough $m$.

    \item On the other hand, if $\tau = \omega(m^{1/3})$, then the RHS of \eqref{eq:Tcond_subs_constr_theorem} increases with $m$, thus imposing stronger requirements on $T$. In particular, $T$ would have to grow with $m$, and depending on the growth of $\tau$ with $m$, could be much larger than the requirement in Theorem \ref{thm:lapl_smooth_strict_stab}.
\end{enumerate}
So the interesting choice of $\tau$ in Theorem \ref{thm:thm_subs_constr_strict_stab} will be $\tau = O(m^{1/3})$. We believe the condition \eqref{eq:Tcond_subs_constr_theorem} can be improved in terms of the dependency on $\tau$, and the RHS should ideally scale linearly with $\tau/m$. Unfortunately, we do not know of a way to improve \eqref{eq:Tcond_subs_constr_theorem} currently\footnote{On the technical front, this condition arises due to the concentration bounds in Proposition's \ref{prop:conc_excit_subsp_constr} and  \ref{prop:spec_norm_bd_subsp_constr}.}. As we will see below for the case where $G$ is a path graph (for which $\theta = 2$ holds), this restriction on $\tau$ will also limit the applicability of Theorem \ref{thm:thm_subs_constr_strict_stab} to the smoothness regime $S_m = o(m^{-1/3})$.

%--------------------------------------------
% Corollary of main result for path graphs
%--------------------------------------------
\begin{corollary}[Path graph] \label{corr:subs_constr_strict_stab_path}
    Under Assumptions \ref{eq:eta_assump} and \ref{assum:smoothness}, suppose  $\eta_{l,t}$ has $R$-subgaussian entries for each $l,t$. Furthermore, let $G = ([m], \calE)$ be the path graph. Then there exist constants $c_i, C_i$ ($i=1,2,3$), $C, C', C_3' > 0$ such that the following is true. 
    
    \begin{enumerate}
    \item For $\delta \in (0,e^{-c_1})$, suppose 
    \begin{align} \label{eq:m_cond_corr_subs_constr}
        m \geq C\max\set{ \frac{1}{R^2 \xi^{1/2}(m,T,\delta)}\sqrt{\frac{T}{d}},  \frac{dR^4}{T} \xi(m,T,\delta)}
    \end{align}
    where $\xi(m,T,\delta) := \log(\frac{1}{\delta}) + \log(1 + \frac{\gamma_2(m,T)}{T})$, with $\gamma_2(m,T)$ as in \eqref{eq:gamma_conds_sub_theorem}. Then for the choice $\tau^* = \lfloor \tilde{\tau}^* \rfloor$ where
    \begin{align} \label{eq:tau_tilde_exp_subs_cor}
        \tilde{\tau}^* = \min\set{\left( \frac{m T}{C' dR^4 \xi(m,T,\delta)} \right)^{1/3} , \ \max\set{\Big(\frac{2 m^2 S_m T}{R^2 d^2}\Big)^{1/3},1}},
    \end{align}
    it holds w.p at least  $1-2\exp(-c_2 \tau^* d) - 4 \delta$ that the estimator in \eqref{eq:proj_est_closed_form} satisfies 
    \begin{align} \label{eq:subs_contr_mainbd_path}
    \frac{1}{m}\sum_{l=1}^m \norm{\est{A}_l - A_l^*}_F^2 \leq C_1\frac{m S_m}{(\tau^*)^2} \left[ 1 + \frac{\gamma_1^2(T,m,\delta)}{T^2} \right] \indic_{\set{\tau^* \leq m-1}} 
    + C_2 \frac{R^2 d^2 \tau^*}{Tm} \log\left(\frac{\tau^* \gamma_3(m,T)}{\rev{\delta} m T} + \frac{1}{\delta} \right).
\end{align}

  \item Suppose each $A^*_l$ is marginally stable, i.e. satisfies \eqref{eq:stable_mat_specrad}, and also $T \geq 2$. 
 If $m$ satisfies
 \begin{equation} \label{eq:m_con_subs_cor_stab}
     m \geq C_3 \max\set{\frac{dR^4}{T} \log\Big(\frac{dm T^{\alpha} \kappa^2_{\max}}{\delta}\Big), \frac{1}{R^2 \log^{1/2}(\frac{1}{\delta})}\sqrt{\frac{T}{d}}},
 \end{equation}
then \eqref{eq:m_cond_corr_subs_constr} is ensured. Moreover, if $S_m \geq \frac{d}{c_3 mR^2}$, then for $$\rev{\tau^*} = \left\lfloor\left(\frac{m T}{C_3' dR^4 \log\Big(\frac{dmT^{\alpha} \kappa^2_{\max}}{\delta}\Big)}\right)^{1/3} \right\rfloor$$
the bound in \eqref{eq:subs_contr_mainbd_path} simplifies to
 \begin{equation} \label{eq:subs_constr_stab_path_err_bd}
       \frac{1}{m}\sum_{l=1}^m \norm{\est{A}_l - A_l^*}_F^2 \leq C_4 R^{20/3} \rev{d^{8/3}} \kappa^{4}_{\max} T^{2\alpha-\frac{2}{3}} \log^{2}(\frac{m}{\delta})\Big(m^{1/3} S_m + \rev{m^{-2/3}}\Big) \log^{2/3}\Big(\frac{dmT^{\alpha} \kappa^2_{\max}}{\delta}\Big).
 \end{equation}
\end{enumerate}
\end{corollary}
%
%As made clear in the proof, $\tau^* \in [m]$ is ensured by the condition on $m$ in \eqref{eq:m_cond_corr_subs_constr}. 
The following remarks are in order.

\begin{enumerate}
\item \rev{The bound in \eqref{eq:subs_constr_stab_path_err_bd} increases with $T$, which is essentially due to the $\gamma_1(T,m,\delta)$ term in \eqref{eq:subs_contr_mainbd_path}. This is an artefact of the analysis, and it is unclear how to improve this for the case of marginally stable matrices. But Remark \ref{rem:exp_dep_T} applies here as well -- in case $\norm{A^*_l}_2 < 1$ for each $l$, we can simply set $\alpha = 0$ and $\kappa^2_{\max} = (1-\max_l \norm{A^*_l}_2^2)^{-1}$ in Corollary \ref{corr:subs_constr_strict_stab_path}. Then, the bound on the MSE will be a decreasing function of $T$.}

\item The second part of the above corollary is written for the regime $S_m = \Omega(1/m)$, but it is not hard to derive error rates for the setting where $S_m = O(1/m)$. Notice that in order to ensure $\frac{1}{m} \sum_{l=1}^m \norm{\est{A}_l - A_l^*}_F^2 = o(1)$ as $m \rightarrow \infty$, we require $S_m = o(m^{-1/3})$, which, as alluded to earlier, is a consequence of the condition \eqref{eq:Tcond_subs_constr_theorem}. To illustrate this smoothness value, recall Example \ref{ex:path_graph_holder} -- in that setting, this would mean that each $A^*_{l,i,j}$ is formed by sampling a \rev{$(M,\beta)$}-H\"older function with $\beta \in (2/3, 1]$.

\item Notice that we only require $T \geq 2$ which is very mild.  In case $T = 2$, and assuming $\kappa_{\max}, R$ to be constants, \eqref{eq:m_con_subs_cor_stab} translates to $m = \Omega(d \log(dm 2^{\alpha}/\delta))$, which leads to the bound 
$$\frac{1}{m}\sum_{l=1}^m \norm{\est{A}_l - A_l^*}_F^2 = O\Big(4^{\alpha} d^{11/3} \log^{8/3}\Big(\frac{dm 2^{\alpha}}{\delta} \Big) (m^{1/3} S_m + \rev{m^{-2/3}})\Big).$$
For instance, if $S_m = O(m^{-2/3})$, this implies that the MSE is $O(4^{\alpha} d^{11/3} \log^{8/3}(\frac{dm 2^{\alpha}}{\delta}) m^{-1/3})$. If $\alpha \asymp d$, then $m$ is required to grow exponentially with $d$ for controlling the MSE. %which is again an artefact of the analysis. It arises due to the $\gamma_1(T,m,\delta)$ term appearing outside the logarithm in the first term in \eqref{eq:subs_contr_mainbd}. 
\end{enumerate}

% analysis laplacian smoothing
%------------------------------------
% Analysis: Laplacian smoothing
%
\section{Analysis: Laplacian smoothing} \label{sec:laplacian_smooth_analysis}
Forming $\est{a} \in \matR^{md^2}$ by column-stacking of $\est{a}_l$'s, the solution of \eqref{eq:vec_pen_ls_algo} satisfies
\begin{equation} \label{eq:pen_ls_lin_sys}
 \left[Q^\top Q + \lambda (L \otimes I_{d^2}) \right] \est{a} =  Q^\top \xtil.
\end{equation}
We need to establish conditions under which $Q^\top Q + \lambda (L \otimes I_{d^2}) \succ 0$, so that \eqref{eq:pen_ls_lin_sys} has a unique solution. In general, this is difficult as the eigenspaces of $Q^\top Q$ and $L \otimes I_{d^2}$ are not aligned.  In Theorem \ref{thm:lapl_smooth_strict_stab}, we will establish the stronger requirement that $Q^\top Q \succ 0$.

%---------------------------
% Proof of theorem
%
\subsection{Proof of Theorem \ref{thm:lapl_smooth_strict_stab}} \label{subsec:proof_thm_lapl_smooth}
Let us denote  
\begin{equation*}
   \Mlam = Q^\top Q + \lambda (L \otimes I_{d^2}) \quad \text{and} \quad  E_l = [\eta_{l,2} \ \eta_{l,3} \ \cdots \ \eta_{l,T+1}]
\end{equation*}
with $\eta_l = \vect(E_l)$. Form $\eta \in \matR^{mdT}$ by column-wise stacking $\eta_1,\dots,\eta_m$. If $\Mlam \succ 0$, then \eqref{eq:pen_ls_lin_sys} implies
\begin{align*}
    \est{a} = \Mlam^{-1} Q^\top (\underbrace{Q a^* + \eta}_{=\xtil}) 
    = a^* - \underbrace{\lambda \Mlam^{-1} (L \otimes I_{d^2}) a^*}_{=: \ e_1 (\text{Bias})} + \underbrace{\Mlam^{-1} Q^{\top} \eta}_{=: \ e_2 (\text{Variance})}.
\end{align*}
Hence $\Mlam \succ 0$ implies $\norm{\est{a} - a^*}_2 \leq \norm{e_1}_2 + \norm{e_2}_2$, and so, 
\begin{equation} \label{eq:lapsmooth_temp_1}
   \sum_{l=1}^m \norm{\est{A}_l - A_l^*}_F^2 = \norm{\est{a} - a^*}_2^2 \leq 2(\norm{e_1}_2^2 + \norm{e_2}_2^2).
\end{equation}
In order to ensure $\Mlam \succ 0$, we will use a stronger condition $Q^\top Q \succ 0$ (for the reasons mentioned earlier). In fact, we will need a sufficiently large lower bound on the smallest eigenvalue of $Q^{\top} Q$.
%
%-------------------------------
% Q^\top Q lower bound
%
\begin{lemma} \label{lem:laplace_smooth_qq_lowbd}
Denote the event 
\begin{equation*}
    \calE_1 = \set{Q^\top Q \succeq \frac{T}{4} (I_m \otimes I_{d^2})}.
\end{equation*}
If $T$ satisfies  \eqref{eq:T_cond_Lapsmooth_strict_stab} for any $\delta \in (0,1)$, then $\prob(\calE_1) \geq 1-\delta$. 
\end{lemma}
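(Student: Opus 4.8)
## Proof plan for Lemma~\ref{lem:laplace_smooth_qq_lowbd}

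The plan is to reduce the claim about the block-diagonal matrix $Q^\top Q = \blkdiag(X_l X_l^\top \otimes I_d)_{l=1}^m$ to a union bound over the $m$ individual systems. Observe that $\calE_1$ holds precisely when, for every $l \in [m]$, $X_l X_l^\top \otimes I_d \succeq \frac{T}{4} I_{d^2}$, which by the Kronecker structure is equivalent to $X_l X_l^\top \succeq \frac{T}{4} I_d$. So it suffices to show that for each fixed $l$, the event $\set{X_l X_l^\top \succeq \frac{T}{4} I_d}$ fails with probability at most $\delta/m$, and then apply a union bound over $l \in [m]$.

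The core of the argument is therefore a single-system statement: for an LDS $x_{t+1} = A_l^* x_t + \eta_{l,t+1}$ with $x_{l,0}=0$ and $R$-subgaussian noise of unit-variance coordinates, the empirical covariance $X_l X_l^\top = \sum_{t=1}^T x_{l,t} x_{l,t}^\top$ is lower bounded by $\frac{T}{4} I_d$ with high probability, provided $T$ is large enough. This is exactly the type of excitation / lower-bound-on-the-Gram-matrix result established in \cite[Section~9]{Sarkar19} for marginally stable (indeed arbitrary) matrices; I would invoke it essentially verbatim. The quantitative threshold it demands on $T$ is of the form $T \gtrsim R^4(d + \log(1/\delta'))$ together with $T \gtrsim R^2 d\,[\log(\Tr(\Gamma_T(A_l^*)-I)+1) + \log(1/\delta')]$ for the per-system failure probability $\delta'$; taking $\delta' = \delta/m$ and maximizing over $l$ yields precisely the stated condition \eqref{eq:T_cond_Lapsmooth_strict_stab}. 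One should be slightly careful that the Grammian term enters through $\Tr(\Gamma_T(A_l^*) - I)$ (matching the ``$-I$'' appearing in \eqref{eq:T_cond_Lapsmooth_strict_stab}), which is the normalization used when the noise has identity covariance, so that $\expec[x_{l,t}x_{l,t}^\top] = \Gamma_{t-1}(A_l^*)$ and the ``signal'' part of the Gram matrix has trace $\sum_{t=1}^T \Tr(\Gamma_{t-1}(A_l^*)) = T + \sum_{t=1}^T \Tr(\Gamma_{t-1}(A_l^*) - I)$; the constant $\frac14$ (rather than something closer to $1$) absorbs the deviation.

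I expect the main obstacle to be bookkeeping rather than any genuinely new idea: one must (i) state the borrowed single-trajectory lower bound from \cite{Sarkar19} in the exact normalization used here, (ii) verify that the union bound over $m$ systems inflates the logarithmic terms to $\log(m/\delta)$ exactly as written in \eqref{eq:T_cond_Lapsmooth_strict_stab}, and (iii) translate the $d \times d$ bound $X_l X_l^\top \succeq \frac{T}{4} I_d$ into the $d^2 \times d^2$ (and then $md^2 \times md^2$ block) statement via the elementary fact that $M \succeq cI_d$ iff $M \otimes I_d \succeq c I_{d^2}$, and that a block-diagonal matrix is PSD-lower-bounded by $cI$ iff each block is. None of these steps is deep, so the proof should be short once the reference result is quoted in the right form; the only subtlety worth flagging is matching the ``$-I$'' convention in the Grammian so that the hypothesis \eqref{eq:T_cond_Lapsmooth_strict_stab} is used as-is.
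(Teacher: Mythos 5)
Your proposal matches the paper's proof exactly: the paper also reduces $\calE_1$ to the per-block events $X_l X_l^\top \succeq \frac{T}{4} I_d$, invokes the single-trajectory Gram-matrix lower bound from \cite[Section 9]{Sarkar19} with failure probability $\delta/m$, and concludes by a union bound over $l \in [m]$. The additional bookkeeping you flag (the Kronecker/block-diagonal reduction and the $\Tr(\Gamma_T(A_l^*)-I)$ normalization) is consistent with, and slightly more explicit than, the paper's two-line argument.
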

\begin{proof}
    Recall $Q = \blkdiag(X_l^\top \otimes I_d)_{l=1}^m$. For any given $l$, it was shown in \citep[Section 9]{Sarkar19} that $X_l X_l^\top \succeq (T/4) I_{d}$ w.p at least $1- \delta$ provided
    $T$ satisfies \eqref{eq:T_cond_Lapsmooth_strict_stab} with $m$ replaced by $1$. Taking a union bound over $[m]$ then completes the proof.
\end{proof}
It remains to bound $\norm{e_1}_2, \norm{e_2}_2$, which upon plugging in \eqref{eq:lapsmooth_temp_1} leads to the statement of the theorem.  
%
%--------------------------
% Bound on $\norm{e_1}_2$
%--------------------------
\begin{lemma}[Bound on $\norm{e_1}_2$] \label{lem:bd_laplsmooth_e_1}
\begin{align*}
   \calE_1 \implies \norm{e_1}_2^2 \leq \frac{16 \lambda^2}{T^2} \norm{(L \otimes I_{d^2}) a^*}_2^2.
\end{align*}
\end{lemma}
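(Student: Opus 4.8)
The plan is to exploit the fact that conditioning on $\calE_1$ gives a uniform lower bound on $\Mlam$, hence an upper bound on the operator norm of $\Mlam^{-1}$, and then $\norm{e_1}_2$ follows by submultiplicativity. Concretely, recall from the decomposition preceding \eqref{eq:lapsmooth_temp_1} that $e_1 = \lambda \Mlam^{-1}(L \otimes I_{d^2}) a^*$, where $\Mlam = Q^\top Q + \lambda (L \otimes I_{d^2})$.

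First I would observe that $L$ is a graph Laplacian, hence $L \succeq 0$, so $L \otimes I_{d^2} \succeq 0$ and therefore $\lambda (L \otimes I_{d^2}) \succeq 0$ for $\lambda \geq 0$. On the event $\calE_1$ we have $Q^\top Q \succeq \tfrac{T}{4}(I_m \otimes I_{d^2})$, and adding the two PSD pieces gives
\begin{equation*}
\Mlam = Q^\top Q + \lambda (L \otimes I_{d^2}) \succeq \frac{T}{4} (I_m \otimes I_{d^2}).
\end{equation*}
In particular $\Mlam \succ 0$ (so the minimizer in \eqref{eq:pen_ls_algo} is unique on this event, matching the decomposition used), and its smallest eigenvalue is at least $T/4$, which means $\norm{\Mlam^{-1}}_2 \leq 4/T$.

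Then I would simply bound, using submultiplicativity of the spectral norm,
\begin{equation*}
\norm{e_1}_2 = \lambda \norm{\Mlam^{-1} (L \otimes I_{d^2}) a^*}_2 \leq \lambda \norm{\Mlam^{-1}}_2 \, \norm{(L \otimes I_{d^2}) a^*}_2 \leq \frac{4\lambda}{T} \norm{(L \otimes I_{d^2}) a^*}_2,
\end{equation*}
and squaring both sides yields the claimed $\norm{e_1}_2^2 \leq \tfrac{16\lambda^2}{T^2}\norm{(L \otimes I_{d^2}) a^*}_2^2$.

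There is essentially no serious obstacle here; the only thing to be careful about is that the bound $\norm{\Mlam^{-1}}_2 \le 4/T$ must come from the \emph{sum} being lower-bounded, which requires the positive-semidefiniteness of $\lambda(L\otimes I_{d^2})$ rather than of $Q^\top Q$ alone — this is exactly why $\calE_1$ (a lower bound on $Q^\top Q$) suffices even though the eigenspaces of $Q^\top Q$ and $L\otimes I_{d^2}$ are misaligned. (As the authors note in the comments after Theorem \ref{thm:lapl_smooth_strict_stab}, this crude use of $\Mlam \succeq Q^\top Q$ is what makes the bias term scale quadratically in $\lambda$, which is the source of the suboptimal $1/5$ exponent; a sharper argument accounting for the interaction of the two eigenbases would be needed to improve it, but that is outside the scope of this lemma.)
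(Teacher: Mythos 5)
Your proof is correct and follows essentially the same route as the paper: on $\calE_1$ one gets $\Mlam \succeq \frac{T}{4}(I_m \otimes I_{d^2})$ (the paper keeps the $\lambda L$ term in the L\"owner bound but then only uses $\norm{\Mlam^{-1}}_2 \leq 4/T$, exactly as you do), and the lemma follows by submultiplicativity. No gaps.
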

\begin{proof}
    Notice that $\calE_1 \implies \Mlam \succeq (\frac{T}{4} I_m + \lambda L) \otimes I_{d^2}$, which in turn implies\footnote{Recall that for positive definite matrices $X, Y$, it holds that $X \succ Y \iff X^{-1} \prec Y^{-1}$.}
    \begin{align*}
         \Mlam^{-1} \preceq \left[\frac{4}{T} v_m v_m^\top + \sum_{l=1}^{m-1} \Big(\frac{T}{4} + \lambda \lambda_l \Big)^{-1} v_l v_l^\top \right] \otimes I_{d^2}.
    \end{align*}
    In particular, $\norm{\Mlam^{-1}}_2 \leq 4/T$ which using $\norm{e_1}_2 \leq \lambda \norm{\Mlam^{-1}}_2 \norm{(L \otimes I_{d^2}) a^*}_2$ leads to the stated bound. 
\end{proof}

%--------------------------
% Bound on $\norm{e_2}_2$
%--------------------------
\begin{lemma}[Bound on $\norm{e_2}_2$] \label{lem:bd_laplsmooth_e_2}
There exist constants $c, C > 0$ such that for any $\delta \in (0,e^{-c})$, it holds w.p at least $1-3\delta$ that 
\begin{align*}
    \norm{e_2}_2^2 
    &\leq C\frac{R^2}{T}\left( \log(1/\delta) + d^2 \log \Big(1 + \frac{4\gamma_1(T,m,\delta)}{T} \Big) + d^2 \sum_{l=1}^{m-1} \log\Big(1 + \frac{4\gamma_1(T,m,\delta)}{4\lambda \lambda_l + T}\Big) \right) \\
&\leq C\frac{R^2}{T}\left( \log(1/\delta) + d^2 \log \Big(1 + \frac{4\gamma_1(T,m,\delta)}{T} \Big) + 4 d^2\gamma_1(T,m,\delta) \sum_{l=1}^{m-1}   \frac{1}{4\lambda \lambda_l + T} \right)
\end{align*}
for $\gamma_1(T,m,\delta)$ as in \eqref{eq:gamma_def_laplsmooth_strict_stab}.
\end{lemma}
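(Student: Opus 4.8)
The plan is to exploit the structure $e_2 = \Mlam^{-1} Q^\top \eta$ together with the eigen-decomposition of the ``denominator'' matrix. On the event $\calE_1$ we have $\Mlam \succeq (\frac{T}{4} I_m + \lambda L) \otimes I_{d^2}$, and more precisely, working in the eigenbasis $\{v_l\}$ of $L$, one can sandwich the quadratic form: writing $\Mlam = \sum_{l} \mu_l(\cdot)$ with the $l$-th block contributing eigenvalue at least $\frac{T}{4} + \lambda \lambda_l$ (and $\frac{T}{4}$ for the $v_m$ direction), we get $\norm{e_2}_2^2 = \eta^\top Q \Mlam^{-2} Q^\top \eta \leq \eta^\top Q \Mlam^{-1} Q^\top \eta$ once we also use $Q^\top Q \preceq \Mlam$ (since $\lambda(L\otimes I_{d^2})\succeq 0$), so that $Q\Mlam^{-1}Q^\top$ is a contraction and hence $\norm{e_2}_2^2 \le \eta^\top \big(Q\Mlam^{-1}Q^\top\big)\eta$. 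This reduces the task to controlling a single quadratic form of the subgaussian vector $\eta$ against a data-dependent PSD matrix whose operator norm is at most $1$ and whose trace we can bound.

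The key steps, in order, would be: (i) Condition on $\calE_1$ (probability $\geq 1-\delta$ by Lemma 4) and on a second event, call it $\calE_2$, on which the per-node Gram matrices satisfy an \emph{upper} bound $X_l X_l^\top \preceq c\,(1 + R^2\log(m/\delta))(\sum_{t=0}^{T-1}\Tr(\Gamma_t(A_l^*)))\, I_d + \text{(lower order)}$ — i.e., $\Tr(X_lX_l^\top)\lesssim \gamma_1(T,m,\delta)$ up to the union-bound factor; this is the standard upper bound on the empirical covariance of a marginally-stable (here, arbitrary) LDS trajectory, available via the arguments underlying \cite{Sarkar19} and holds w.p. $\geq 1-\delta$. (ii) On $\calE_1 \cap \calE_2$, bound the trace of the relevant matrix: $\Tr\!\big(Q\Mlam^{-1}Q^\top\big) = \sum_{l}\Tr\!\big((X_l^\top\otimes I_d)\,[\text{block of }\Mlam^{-1}]\,(X_l\otimes I_d)\big)$; using the eigen-expansion of $\Mlam^{-1}$ in the $v_l$-basis and the cyclic property of trace, this becomes $\lesssim \frac{d}{1}\sum_{i=1}^{d^2}\big(\text{weighted combination of }\Tr(X_lX_l^\top)\text{ with weights }(\tfrac{T}{4}+\lambda\lambda_l)^{-1}\big)$, giving a bound of the form $d^2\gamma_1(T,m,\delta)\big(\frac{4}{T} + \sum_{l=1}^{m-1}\frac{1}{4\lambda\lambda_l+T}\big)$ for the trace and, separately, a bound $\lesssim \frac{4\gamma_1(T,m,\delta)}{T}$ for the operator norm. (iii) Apply the Hanson–Wright inequality (for $\eta$ with independent $R$-subgaussian coordinates) to $\eta^\top (Q\Mlam^{-1}Q^\top)\eta$: with probability $\geq 1-\delta$ this is at most $C R^2\big(\Tr(M) + \norm{M}_2\log(1/\delta)\big)$ where $M = Q\Mlam^{-1}Q^\top$ (using $\norm{M}_2\leq 1$ to simplify the $\log(1/\delta)$ term further, or keeping $\norm M_2$ and noting it is $O(1)$). (iv) Re-package: the $\Tr(M)$ term, after pulling out $d^2\gamma_1/T$ and noting $\sum_l \log(1 + x_l) \le \sum_l x_l$, yields exactly the two displayed upper bounds; the $\log(1/\delta)$ term is absorbed. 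The factor $1/T$ out front comes from $M \preceq Q\Mlam^{-1}Q^\top$ with $\Mlam^{-1}$ contributing the $\frac{1}{\ldots + T}$ denominators, and from the normalization in $\calE_1$. Collecting the three failure events ($\calE_1$, $\calE_2$, Hanson–Wright) via a union bound gives the claimed $1-3\delta$ probability; the restriction $\delta \in (0, e^{-c})$ absorbs the absolute constants appearing in the subgaussian trajectory bounds of \cite{Sarkar19}.

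The main obstacle is step (ii): correctly bounding $\Tr(Q\Mlam^{-1}Q^\top)$ when the eigenspaces of $Q^\top Q$ and $L\otimes I_{d^2}$ are \emph{not} aligned. One cannot simply diagonalize both simultaneously, so the argument must instead use $\Mlam^{-1} \preceq \big[\frac{4}{T}v_mv_m^\top + \sum_{l<m}(\frac{T}{4}+\lambda\lambda_l)^{-1}v_lv_l^\top\big]\otimes I_{d^2}$ (valid on $\calE_1$, exactly as in Lemma 5) and then take the trace against the \emph{known} PSD matrix $Q^\top Q = \blkdiag(X_lX_l^\top \otimes I_d)$, at which point $\Tr\!\big((X_lX_l^\top\otimes I_d)(v_lv_l^\top\otimes I_{d^2})[\ldots]\big)$ factorizes through $\Tr(X_lX_l^\top)\cdot\sum_i v_{l,?}^2$-type quantities — the $v_l$-coordinates summing to one over $l$. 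This is where the sum $\sum_{l=1}^{m-1}\frac{1}{4\lambda\lambda_l+T}$ is produced, and keeping careful track of which index ($\{1,\dots,d^2\}$ copies from the $I_{d^2}$ factor, versus $d$ from the $I_d$ factor) contributes which power of $d$ is the fiddly part; a clean way is to note $\Tr(Q\Mlam^{-1}Q^\top) \le \sum_{j=1}^{md^2} \langle \mathrm{row}_j, \ldots\rangle$ and bound each node's contribution by $\Tr(X_lX_l^\top)$ times the appropriate eigenvalue of $\Mlam^{-1}$, then sum. Everything else (Hanson–Wright, the trajectory upper bound, the union bound) is standard.
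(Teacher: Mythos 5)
There is a genuine gap at your step (iii). You propose to control $\eta^\top \big(Q \Mlam^{-1} Q^\top\big)\eta$ by the Hanson--Wright inequality, treating $Q\Mlam^{-1}Q^\top$ as the matrix of a quadratic form in the subgaussian vector $\eta$. But this matrix is \emph{not} independent of $\eta$: the blocks of $Q$ are built from the states $x_{l,1},\dots,x_{l,T}$, and each $x_{l,t}$ is a function of $\eta_{l,1},\dots,\eta_{l,t}$, whose coordinates (for $t\ge 2$) are coordinates of the very vector $\eta = (\vect(E_l))_l$ with $E_l = [\eta_{l,2}\cdots\eta_{l,T+1}]$. Hanson--Wright requires the quadratic-form matrix to be deterministic (or at least independent of the vector), so the step fails; conditioning on $Q$ does not decouple them. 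This dependence between the design and the noise is precisely what the paper's Proposition \ref{prop:bd_self_norm_laplsmooth} is built to handle: it bounds the self-normalized quantity $\norm{Q^\top\eta}^2_{\Mlamtil^{-1}}$ via the method-of-mixtures supermartingale argument of \cite[Theorem 1]{abbasi11}, exploiting that $Q^\top\eta=\sum_{l,t} e_l\otimes(x_{l,t}\otimes\eta_{l,t+1})$ is a martingale transform with $x_{l,t}$ measurable w.r.t.\ the past and $\eta_{l,t+1}$ the innovation. (The paper does use Hanson--Wright, but only inside Lemma \ref{lem:Qmat_bd}, where the quadratic form $\tilde\eta_l^\top\tilde A_l^\top\tilde A_l\tilde\eta_l$ has a \emph{deterministic} matrix $\tilde A_l$ --- that is the legitimate use, and it is what produces the event $\calE_{2b}$ you call $\calE_2$.) Relatedly, even if your concentration step were valid, a trace-plus-operator-norm bound can only produce the second (linearized) inequality of the lemma; the first inequality, with the $\log(1+\cdot)$ terms, comes from the $\log\det$ ratio in the self-normalized bound and is not reachable by your route.

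A secondary, fixable slip: your reduction $\eta^\top Q\Mlam^{-2}Q^\top\eta \le \eta^\top Q\Mlam^{-1}Q^\top\eta$ only uses $\Mlam\succeq I$ and therefore discards the factor $\norm{\Mlam^{-1}}_2\le 4/T$ that supplies the overall $R^2/T$ prefactor in the lemma. The correct factorization, as in the paper, is $\norm{\Mlam^{-1}Q^\top\eta}_2^2\le\norm{\Mlam^{-1/2}}_2^2\,\norm{Q^\top\eta}^2_{\Mlam^{-1}}\le\frac{4}{T}\norm{Q^\top\eta}^2_{\Mlam^{-1}}$ on $\calE_1$; your later remark attributing the $1/T$ to the denominators $4\lambda\lambda_l+T$ conflates the outer prefactor with the per-eigenvalue weights inside the sum. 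Your trace computation in step (ii) is otherwise sound and would reproduce the $d^2\gamma_1(T,m,\delta)\sum_{l}(4\lambda\lambda_l+T)^{-1}$ structure, but it cannot substitute for the missing martingale concentration argument.
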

\begin{proof}
   $\calE_1$ implies 
    \begin{equation*}
      \norm{e_2}_2^2 = \norm{\Mlam^{-1} Q^\top \eta}_2^2 \leq \norm{\Mlam^{-1/2}}_2^2 \norm{\Mlam^{-1/2} Q^\top \eta}_2^2 \leq \frac{4 \norm{Q^\top \eta}_{\Mlam^{-1}}^2}{T}  
    \end{equation*}
    where the final bound uses the fact $\norm{\Mlam^{-1/2}}_2 \leq 2/\sqrt{T}$, as seen from the proof of Lemma \ref{lem:bd_laplsmooth_e_1}. Now note that $Q^\top Q \succeq \frac{T}{4} (I_m \otimes I_{d^2})$ is equivalent to
    \begin{align*}
         2Q^\top Q + \lambda (L \otimes I_{d^2}) &\succeq \frac{T}{4} (I_m \otimes I_{d^2}) + Q^\top Q + \lambda (L \otimes I_{d^2}) \\
         \implies \Mlam^{-1} &\preceq 2\Big(Q^\top Q + (\lambda L + \frac{T}{4} I_m) \otimes I_{d^2} \Big)^{-1}.  
    \end{align*}
    Denoting $\Mlamtil := Q^\top Q + (\lambda L + \frac{T}{4} I_m) \otimes I_{d^2}$, this means
    \begin{align*}
        \norm{Q^\top \eta}_{\Mlam^{-1}}^2 \leq 2 \norm{Q^\top \eta}_{\Mlamtil^{-1}}^2.
    \end{align*}
To complete the proof, we need the following bound on $\norm{Q^\top \eta}_{\Mlamtil^{-1}}^2$.
\begin{proposition} \label{prop:bd_self_norm_laplsmooth}
    For and $\delta \in (0,1)$ consider the event
    \begin{equation*}
        \calE_2 := \set{\norm{Q^\top \eta}_{\Mlamtil^{-1}}^2 \leq 2R^2\left( \log(1/\delta) + \frac{d^2}{2} \log \Big(1 + \frac{4\gamma_1(T,m,\delta)}{T} \Big) + \frac{d^2}{2} \sum_{l=1}^{m-1} \log\Big(1 + \frac{4\gamma_1(T,m,\delta)}{4\lambda \lambda_l + T}\Big) \right) }.
    \end{equation*}
    Then there exists a constant $c > 0$ such that for any $\delta \in (0,e^{-c})$, $\prob(\calE_2) \geq 1-2\delta$.
\end{proposition}
The proof of Proposition \ref{prop:bd_self_norm_laplsmooth} is along the same lines as that of \cite[Theorem 1]{abbasi11} where a  concentration bound for ``self-normalized'' vector-valued  martingales was obtained. We defer it to Appendix \ref{appsec:proofs_lapl_smooth}.

The statement of Lemma \ref{lem:bd_laplsmooth_e_2} now follows on the event $\calE_1 \cap \calE_2$. Note that the second inequality therein is a consequence of the fact $\log (1 + x) \leq x$ for all $x > -1$.
\end{proof}
Finally, we note that the statement of Theorem \ref{thm:lapl_smooth_strict_stab} follows directly from Lemma's \ref{lem:bd_laplsmooth_e_1} and \ref{lem:bd_laplsmooth_e_2}, on the event $\calE_1 \cap \calE_2$.

%--------------------------
% Proof of Corollary
%
\subsection{Proof of Corollary \ref{corr:lapl_smooth_strict_stab}} \label{subsec:proof_corr_laplsmooth_path}
Recall that for a path graph, the eigenvalues of $L$ are given by (e.g., \citep{brouwer12})
\begin{align*}
   \lambda_l = 4\sin^2\Big(\frac{\pi}{2m}(m-l)\Big); \quad l=1,\dots,m.
\end{align*}
Since $\lambda_l^2 \leq 4 \lambda_l$ for each $l$, we then obtain 
\begin{align*}
    \norm{(L \otimes I_{d^2}) a^*}_2^2 = \sum_{l=1}^m \sum_{i=1}^{d^2} \lambda_l^2 \dotprod{a^*}{v_l \otimes e_i}^2 \leq 4 \sum_{l=1}^m \sum_{i=1}^{d^2} \lambda_l \dotprod{a^*}{v_l \otimes e_i}^2 = (a^*)^{\top} (L \otimes I_{d^2}) a^*.
\end{align*}
Hence $\norm{(L \otimes I_{d^2}) a^*}_2^2 \leq S_m$ under Assumption \ref{assum:smoothness}. 

Next, using the fact $\sin x \in [\frac{2x}{\pi}, x]$ for $x \in [0,\pi/2]$, this implies $\lambda_l \in [c_1 \frac{(m-l)^2}{m^2}, c_2 \frac{(m-l)^2}{m^2}]$ for each $l$. Hence we can bound $ \sum_{l=1}^{m-1} \frac{1}{4\lambda \lambda_l + T}$ as 
\begin{align} \label{eq:temp_riemann_sum}
     \sum_{l=1}^{m-1} \frac{1}{4\lambda \lambda_l + T} \leq \sum_{l=1}^{m-1} \frac{1}{c_1 \lambda (1-\frac{l}{m})^2 + T}.   
\end{align}
Denoting $f(x) = \frac{1}{c_1 \lambda (1-x)^2 + T}$ for $x \in [0,1]$, note that $f$ is \rev{increasing} for $x \in [0,1]$. Hence by recognizing the RHS of \eqref{eq:temp_riemann_sum} as a ($m$ times the) Riemannian sum for $\int_{0}^1 f(x) dx$, we obtain
\begin{align*}
    \sum_{l=1}^{m-1} \frac{1}{4\lambda \lambda_l + T} \leq m \int_{0}^1 \frac{1}{c_1 \lambda (1-x)^2 + T} dx = m \int_{0}^1 \frac{1}{c_1 \lambda y^2 + T} d y = \frac{m}{\sqrt{c_1 \lambda T}} \arctan\Big(\sqrt{\frac{c_1 \lambda}{T}}\Big).
\end{align*}

Applying the above bounds to \eqref{eq:lapl_smooth_mainbd}, we then obtain  (after some minor simplifications)
\begin{align} \label{eq:lapl_smooth_path_graph_mainbd}
    \frac{1}{m} \sum_{l=1}^m \norm{\est{A}_l - A_l^*}_F^2 
    &\leq 
    \frac{C_1}{m}\Bigg(\frac{\lambda^2 S_m}{T^2}  + \frac{R^2}{T} \left[ \log\left(\frac{1}{\delta}\right) + d^2 \log\left( 1 + \frac{\gamma_1 (T,m,\delta)}{T}\right) \right] \nonumber \\
    &+ \frac{R^2 d^2 m \gamma_1 (T,m,\delta)}{T^{3/2} \sqrt{\lambda}} \arctan \left(\sqrt{\frac{C_2 \lambda}{T}} \right) \Bigg).
\end{align}

To show the first part of the corollary, we use the fact $\arctan(\cdot) \leq \pi/2$, and observe that the function $f(\lambda) = \frac{\lambda^2 S_m}{T^2} + (\frac{R^2 d^2}{T^{3/2}}) \frac{m}{\sqrt{\lambda}}$ is convex for $\lambda > 0$ and is minimized for $\lambda = (Rd)^{4/5} (\frac{m}{S_m})^{2/5} T^{1/5}$. Plugging this value in \eqref{eq:lapl_smooth_path_graph_mainbd} and observing that $\gamma_1(T,m,\delta) > 1$ leads to \eqref{eq:lapl_smooth_path_graph_mainbd_1} after some minor simplification.

For the second part, use \eqref{eq:trace_gamma_bd_stabmat} and the monotonicity of $\Tr (\Gamma_t(A_l^*))$ w.r.t $t$ to obtain
\begin{equation} \label{eq:trace_bd_stable_temp}
    \sum_{t=0}^{T-1} \Tr (\Gamma_t(A_l^*)) \lesssim d T^{\alpha_l + 1} \kappa^2(P_l)
\end{equation}
for $\alpha_l \in [d]$ as in \eqref{eq:trace_gamma_bd_stabmat}. Recalling \eqref{eq:alpha_kappa_defs_stabmat}, this implies  
\begin{equation} \label{eq:gamma_1_bd_stable}
\gamma_1(T,m,\delta) \lesssim d R^2 \log(m/\delta) T^{\alpha+1} \kappa^2_{\max}.
\end{equation}
Using \eqref{eq:trace_bd_stable_temp} and \eqref{eq:gamma_1_bd_stable} in \eqref{eq:T_cond_Lapsmooth_strict_stab} and \eqref{eq:lapl_smooth_path_graph_mainbd_1} respectively, we obtain the second part of the corollary after some minor simplifications (noting that $R \gtrsim 1$ due to Assumption \ref{eq:eta_assump}).

%-----------------------------------------------------
% Proof of Corollaries for complete and star graphs
%
\subsection{Proof of Corollary \ref{corr:lapl_smooth_strict_stab_compl_star}}
\subsubsection{Proof for complete graph}
Recall that for a complete graph, the eigenvalues of $L$ are given by \cite{brouwer12}
\begin{align*}
 \lambda_m = 0, \text{ and } \lambda_l = m \text{ for } l=1,\dots,m-1.
\end{align*}
This leads to the bound
\begin{align*}
    \norm{(L \otimes I_{d^2}) a^*}_2^2 = \sum_{l=1}^m \sum_{i=1}^{d^2} \lambda_l^2 \dotprod{a^*}{v_l \otimes e_i}^2 = m^2 \sum_{l=1}^{m-1} \sum_{i=1}^{d^2} \dotprod{a^*}{v_l \otimes e_i}^2 = m (a^*)^{\top} (L \otimes I_{d^2}) a^*.
\end{align*}
Hence $\norm{(L \otimes I_{d^2}) a^*}_2^2 \leq m S_m$ under Assumption \ref{assum:smoothness}. Moreover, $\sum_{l=1}^{m-1} \frac{1}{4\lambda \lambda_l + T} \leq \frac{m}{m\lambda + T}$. Using these bounds in \eqref{eq:lapl_smooth_mainbd} and the fact $\gamma_1(T,m,\delta) \geq 1$, we obtain 
\begin{align} 
    &\frac{1}{m} \sum_{l=1}^m \norm{\est{A}_l - A_l^*}_F^2 \nonumber \\
    &\lesssim  \frac{\lambda^2 S_m}{T^2}  + \frac{R^2}{m T} \left[ \log\left(\frac{1}{\delta}\right) + d^2 \log\left( 1 + \frac{\gamma_1 (T,m,\delta)}{T}\right) \right]  
    + \frac{R^2 d^2 \gamma_1 (T,m,\delta)}{T} \left[ \frac{1}{m\lambda + T} \right] \nonumber \\ 
    &\leq \left(\frac{\lambda^2 S_m}{T^2} + \frac{R^2 d^2}{T}\frac{1}{m\lambda}  \right) \gamma_1 (T,m,\delta) + \frac{R^2}{m T} \left[ \log\left(\frac{1}{\delta}\right) + d^2 \log\left( 1 + \frac{\gamma_1 (T,m,\delta)}{T}\right) \right]. \label{eq:corr_lapl_smooth_compl_temp1}
\end{align}
Now notice that $\lambda^* = (\frac{T R^2 d^2}{m S_m})^{1/3}$ minimizes the RHS of \eqref{eq:corr_lapl_smooth_compl_temp1} (up to a constant). Plugging $\lambda = \lambda^*$ in \eqref{eq:corr_lapl_smooth_compl_temp1} leads to the bound
\begin{align} \label{eq:corr_lapl_smooth_compl_temp2}
   \frac{1}{m} \sum_{l=1}^m \norm{\est{A}_l - A_l^*}_F^2  
    &\lesssim   
   \Big(\frac{S_m^{1/3}}{m^{2/3}} \Big) \Big(\frac{Rd}{T} \Big)^{4/3} \gamma_1(T,m,\delta) \nonumber \\ 
    &+ \frac{R^2}{m T} \left[ \log\left(\frac{1}{\delta}\right) + d^2 \log\left( 1 + \frac{\gamma_1 (T,m,\delta)}{T}\right) \right].
\end{align}
Finally we obtain the stated error bound upon using  \eqref{eq:gamma_1_bd_stable} in \eqref{eq:corr_lapl_smooth_compl_temp2}.

%--------------------------------
% Proof for star graphs
%

\subsubsection{Proof for star graph}
Recall that for a star graph, the eigenvalues of $L$ are given by \cite{spielmanSGT}
\begin{align*}
 \lambda_m = 0, \ \lambda_1 = m, \ \text{ and } \lambda_l = 1 \text{ for } l=2,\dots,m-1 
\end{align*}
\rev{and also $v_1 = [m(m-1)]^{-1/2} (-(m-1),1,\cdots,1)^\top$}. This leads to the bound
\begin{align*}
    \norm{(L \otimes I_{d^2}) a^*}_2^2 
    &= \sum_{l=1}^m \sum_{i=1}^{d^2} \lambda_l^2 \dotprod{a^*}{v_l \otimes e_i}^2 \\
    &= m^2  \sum_{i=1}^{d^2} \dotprod{a^*}{v_1 \otimes e_i}^2 + \sum_{l=2}^{m-1} \sum_{i=1}^{d^2}  \dotprod{a^*}{v_l \otimes e_i}^2 \\
    &\leq \rev{(m^2 - m) \sum_{i=1}^{d^2} \dotprod{a^*}{v_1 \otimes e_i}^2 + S_m} \\
    &\leq \rev{m(m-1) (a^*)^\top [(v_1 \otimes I_{d^2}) (v_1^{\top} \otimes I_{d^2})] a^* + S_m} \\
    &\leq \rev{\tilde{S}_m}, 
\end{align*}
\rev{where the last equality follows from the expression for $v_1$ with some minor simplification.} It is also easy to verify that $\sum_{l=1}^{m-1} \frac{1}{4\lambda \lambda_l + T} \leq \frac{2m}{\lambda + T}$.  Using these bounds in \eqref{eq:lapl_smooth_mainbd} and the fact $\gamma_1(T,m,\delta) \geq 1$, we obtain 
\begin{align} 
    &\frac{1}{m} \sum_{l=1}^m \norm{\est{A}_l - A_l^*}_F^2 \nonumber \\
    &\lesssim  \frac{\lambda^2 \tilde{S}_m}{T^2}  + \frac{R^2}{m T} \left[ \log\left(\frac{1}{\delta}\right) + d^2 \log\left( 1 + \frac{\gamma_1 (T,m,\delta)}{T}\right) \right]  
    + \frac{R^2 d^2 \gamma_1 (T,m,\delta)}{T} \left[ \frac{1}{\lambda + T} \right] \nonumber \\ 
    &\leq \left(\frac{\lambda^2 \tilde{S}_m}{T^2} + \frac{R^2 d^2}{T}\frac{1}{\lambda}  \right) \gamma_1 (T,m,\delta) + \frac{R^2}{m T} \left[ \log\left(\frac{1}{\delta}\right) + d^2 \log\left( 1 + \frac{\gamma_1 (T,m,\delta)}{T}\right) \right]. \label{eq:corr_lapl_smooth_star_temp1}
\end{align}
The RHS of \eqref{eq:corr_lapl_smooth_star_temp1} is minimized for $\lambda =  (\frac{T R^2 d^2 m}{ \tilde{S}_m})^{1/3}$ (up to a constant). Plugging $\lambda^*$ in \eqref{eq:corr_lapl_smooth_star_temp1} leads to the bound 
\begin{align} \label{eq:corr_lapl_smooth_star_temp2}
 \frac{1}{m} \sum_{l=1}^m \norm{\est{A}_l - A_l^*}_F^2 
  &\lesssim \Big(\frac{\tilde{S}_m}{m}\Big)^{1/3}  (\frac{Rd}{T})^{4/3} \gamma_1(T,m,\delta) \nonumber \\ 
    &+ \frac{R^2}{m T} \left[ \log\left(\frac{1}{\delta}\right) + d^2 \log\left( 1 + \frac{\gamma_1 (T,m,\delta)}{T}\right) \right].
\end{align}
Finally we obtain the stated error bound upon using  \eqref{eq:gamma_1_bd_stable} in \eqref{eq:corr_lapl_smooth_star_temp2}.

% analysis subspace constraint
%------------------------------------
% Analysis: Subspace constraint
%------------------------------------
\section{Analysis: Subspace constrained LS} \label{sec:subspace_LS_analysis}
Denote $\Mtau = \Ptilproj Q^\top Q \Ptilproj $ for convenience. Also, let $\Ptilproj_{\perp}$ denote the projection matrix for the space orthogonal to $\text{span}(\Ptilproj)$. Using $\xtil = Qa^* + \eta$, where we recall $\eta \in \matR^{dmT}$ from Section \ref{subsec:proof_thm_lapl_smooth}, we can then write $\est{a}$ as 
%^
\begin{align*}
\est{a} &= \Mtau^\dagger  \Ptilproj Q^\top (Qa^* + \eta) \\ 
&=  \Mtau^\dagger  \Ptilproj Q^\top Q \Big(\Ptilproj a^* + \Ptilproj_{\perp} a^* \Big) + \Mtau^\dagger \Ptilproj Q^\top \eta \\
&= \Mtau^\dagger \Mtau a^* + \Mtau^\dagger \Ptilproj Q^\top Q \Ptilproj_{\perp} a^* + \Mtau^\dagger \Ptilproj Q^\top \eta, 
\end{align*}
which implies
\begin{align} \label{eq:err_decomp_subsp_const}
    \est{a} - a^* = \underbrace{-(I_{md^2} - \Mtau^\dagger \Mtau)) a^*}_{=: e_1 \ (\text{Bias})} + \underbrace{\Mtau^{\dagger} \Ptilproj Q^\top Q \Ptilproj_{\perp} a^*}_{=: e_2 \ \text{(Bias)}} + \underbrace{\Mtau^\dagger \Ptilproj Q^\top \eta}_{=: e_3 \ \text{(Variance)}}.
\end{align}
The terms $e_1,e_2$ are the bias terms which we expect to decrease with $\tau$. On the other hand, $e_3$ is the variance term which will increase with $\tau$. 

%--------------------------------------
% Proof of theorem
%
\subsection{Proof of Theorem \ref{thm:thm_subs_constr_strict_stab}}
Before proceeding, it will be useful to denote \rev{an orthonormal-basis matrix for $\text{span}(\Ptilproj)$ by $\Vtau \in \matR^{md^2 \times \tau d^2}$}. In fact given the definition of $\Ptilproj$, we can explicitly write $\Vtau$ as 
\begin{align} \label{eq:Vtau_def}
    \Vtau = [v_m \ v_{m-1} \ \cdots \ v_{m-\tau+1}] \otimes I_{d^2}
\end{align}
where we recall from Section \ref{subsec:setup} that $v_l \in \matR^m$ is the eigenvector (associated with eigenvalue $\lambda_l$) of the Laplacian $L$. 

The key quantity in our analysis will be the matrix $\Vtau^\top Q^\top Q \Vtau$. Notice that $M_{\tau} = \Vtau \Vtau^\top Q^\top Q \Vtau \Vtau^\top$, hence if $\Vtau^\top Q^\top Q \Vtau \succ 0$, then it implies 
\begin{equation} \label{eq:subs_analysis_Mtau_dag_proj}
    M_{\tau}^{\dagger} M_{\tau} = \Big[\Vtau \Vtau^\top Q^\top Q \Vtau \Vtau^\top \Big]^{\dagger} \Big(\Vtau \Vtau^\top Q^\top Q \Vtau \Vtau^\top \Big) = \Vtau \Vtau^\top \ ( = \Ptilproj).
\end{equation}

If $\Vtau^\top Q^\top Q \Vtau \succ 0$, then the term $e_1$ is easily bounded as follows. 
\begin{lemma}[Bound $\norm{e_1}_2$] \label{lem:subsp_e_1}
   Recall the notation for the spectrum of $L$ from Section \ref{sec:prob_setup}. If Assumption  \ref{assum:smoothness} holds, then for any graph $G$, 
    \begin{align*}
    \Vtau^\top Q^\top Q \Vtau \succ 0 \implies \norm{e_1}_2^2 \leq C \frac{S_m}{\lambda_{m-\tau}}\indic_{\tau \leq m-1} 
    \end{align*}
    for a constant $ C > 0$.
\end{lemma}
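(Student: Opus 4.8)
The plan is to use the hypothesis $\Vtau^\top Q^\top Q \Vtau \succ 0$ to collapse $e_1$ into a plain projection of $a^*$, and then to read off the bound directly from the smoothness assumption expressed in the eigenbasis of $L$. First I would invoke \eqref{eq:subs_analysis_Mtau_dag_proj}: under the hypothesis this gives $\Mtau^\dagger \Mtau = \Vtau \Vtau^\top = \Ptilproj$, so the definition of $e_1$ in \eqref{eq:err_decomp_subsp_const} simplifies to $e_1 = -(I_{md^2} - \Ptilproj) a^* = -\Ptilproj_{\perp} a^*$, where $\Ptilproj_{\perp} = \big(\sum_{l=1}^{m-\tau} v_l v_l^{\top}\big) \otimes I_{d^2}$ projects onto the span of the eigenvectors of $L$ associated with its $m-\tau$ largest eigenvalues, tensored with $I_{d^2}$.

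Next I would expand $a^*$ in the orthonormal basis $\{v_l \otimes e_i : l \in [m],\, i \in [d^2]\}$ of $\matR^{md^2}$, so that
$$\norm{e_1}_2^2 = \norm{\Ptilproj_{\perp} a^*}_2^2 = \sum_{l=1}^{m-\tau} \sum_{i=1}^{d^2} \dotprod{a^*}{v_l \otimes e_i}^2,$$
while Assumption \ref{assum:smoothness}, in the form \eqref{eq:vect_form_smooth}, reads $\sum_{l=1}^{m} \sum_{i=1}^{d^2} \lambda_l \dotprod{a^*}{v_l \otimes e_i}^2 = (a^*)^{\top}(L \otimes I_{d^2}) a^* \leq S_m$. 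The main step is then the eigenvalue comparison: when $\tau \leq m-1$, every index $l$ appearing in the sum for $\norm{e_1}_2^2$ satisfies $l \leq m-\tau$, hence $\lambda_l \geq \lambda_{m-\tau} \geq \lambda_{m-1} > 0$ by the ordering $\lambda_1 \geq \cdots \geq \lambda_{m-1} > \lambda_m = 0$. Discarding the remaining nonnegative terms of the smoothness bound then yields $\lambda_{m-\tau} \norm{e_1}_2^2 \leq S_m$, i.e. $\norm{e_1}_2^2 \leq S_m / \lambda_{m-\tau}$, which is the claim with $C = 1$. In the degenerate case $\tau = m$ one has $\Ptilproj = I_{md^2}$, so $e_1 = 0$ and the bound holds trivially since its right-hand side carries the factor $\indic_{\tau \leq m-1} = 0$.

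I do not expect a genuine obstacle here; the single point requiring care is that the denominator $\lambda_{m-\tau}$ must be strictly positive, which is exactly why the statement is restricted to $\tau \leq m-1$ (for $\tau = m$ the eigenvalue $\lambda_0$ would be undefined, but $e_1$ vanishes in that case anyway). Everything else follows mechanically from \eqref{eq:subs_analysis_Mtau_dag_proj}, the spectral decomposition of $L \otimes I_{d^2}$, and the eigenvalue ordering; no probabilistic input is needed, only the deterministic hypothesis $\Vtau^\top Q^\top Q \Vtau \succ 0$.
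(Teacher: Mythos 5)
Your proof is correct and follows the same route as the paper: use \eqref{eq:subs_analysis_Mtau_dag_proj} to reduce $e_1$ to $-\Ptilproj_{\perp} a^*$, then compare $\norm{\Ptilproj_{\perp} a^*}_2^2$ against $(a^*)^{\top}(L \otimes I_{d^2})a^*$ via the eigenvalue ordering to get $\norm{e_1}_2^2 \leq S_m/\lambda_{m-\tau}$. You merely spell out the spectral-decomposition step that the paper compresses into a single inequality, and your handling of the $\tau = m$ case matches the indicator in the statement.
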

\begin{proof}
If $\Vtau^\top Q^\top Q \Vtau \succ 0$, then using \eqref{eq:subs_analysis_Mtau_dag_proj} we obtain $e_1 = -\Ptilproj_{\perp} a^*$, and then bound $\norm{e_1}_2$ as
    \begin{align*}
        \norm{e_1}_2^2 = \norm{\Ptilproj_{\perp} a^*}_2^2 \leq \frac{a^* (L \otimes I_{d^2}) a^*}{\lambda_{m-\tau}} \indic_{\tau \leq m-1} 
        \leq \frac{S_m}{\lambda_{m-\tau}} \indic_{\tau \leq m-1}.
    \end{align*}
\end{proof}

For bounding the terms $\norm{e_2}_2$ and $\norm{e_3}_2$, we will actually require a sufficiently large lower bound on  $\lambda_{\min}(\Vtau^\top Q^\top Q \Vtau)$ (the smallest eigenvalue of $\Vtau^\top Q^\top Q \Vtau$). This is captured by the following crucial lemma.
%
%An important quantity that will play a key role in the analysis is $\lambmin(\Vtau^\top Q^\top Q %\Vtau)$ which we denote to be the smallest  eigenvalue of the (p.s.d) matrix $\Vtau^\top Q^\top Q %\Vtau \in \matR^{\tau d^2 \times \tau d^2}$. 
%
%
\begin{lemma} \label{lem:eigval_control_mat_subspace_method}
Let $G$ be $\theta$-delocalized \rev{and denote the event} 
\begin{align} \label{eq:event_eigval_qqmat}
    \calE_3 := \set{\Vtau^\top Q^\top Q \Vtau \succeq \frac{T}{4} I_{\tau d^2}}.
\end{align}  
For any $\delta \in (0,1)$, suppose that $T, \tau$ satisfy
\eqref{eq:Tcond_subs_constr_theorem} for a suitable choice of constant $C > 0$. Then there exists a constant $c > 0$ such that $\prob(\calE_3) \geq 1-2\exp(-c\theta \tau d) - \delta$.
\end{lemma}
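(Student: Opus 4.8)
The plan is to lower-bound $\lambda_{\min}(\Vtau^\top Q^\top Q \Vtau)$ by first passing to its expectation and then controlling the deviation. Write $Q^\top Q = \blkdiag(X_l X_l^\top \otimes I_d)_{l=1}^m$, so that, using the explicit form of $\Vtau$ in \eqref{eq:Vtau_def}, the matrix $\Vtau^\top Q^\top Q \Vtau$ is a $\tau d^2 \times \tau d^2$ block matrix whose $(i,j)$-block equals $\big(\sum_{l=1}^m v_{m-i+1,l}\, v_{m-j+1,l}\, (X_l X_l^\top)\big) \otimes I_d$. In other words, it is a ``$v$-weighted'' average of the per-node Gram matrices $X_l X_l^\top$. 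The first step is to argue that, in expectation (or after conditioning and using the lower bounds on $X_l X_l^\top$ from \citep[Section 9]{Sarkar19}), the ``diagonal'' weighting $\sum_l v_{m-i+1,l}^2 X_l X_l^\top$ concentrates around something of order $T$ per block, while the off-diagonal cross terms $\sum_l v_{m-i+1,l} v_{m-j+1,l} X_l X_l^\top$ ($i\neq j$) are small because the $v$'s are orthonormal and, crucially, delocalized — this is where \eqref{eq:theta_deloc_cond} enters, bounding $\sum_i v_{m-i+1,l}^2 \le \theta\tau/m$ and hence controlling how much any single node can contribute.

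Next I would split $\Vtau^\top Q^\top Q \Vtau = \Vtau^\top \expec[Q^\top Q]\Vtau + \Vtau^\top (Q^\top Q - \expec[Q^\top Q])\Vtau$, show the first term is $\succeq \tfrac{T}{2} I_{\tau d^2}$ (using that each $\expec[X_l X_l^\top] = \Gamma_{T-1}(A_l^*) \succeq \Gamma_0 = I_d$, summed with weights that add to $1$ along the diagonal), and then bound the operator norm of the fluctuation term by $T/4$ with the claimed probability. For the fluctuation bound I expect to invoke Proposition \ref{prop:conc_excit_subsp_constr} and Proposition \ref{prop:spec_norm_bd_subsp_constr} (referenced in the footnote after \eqref{eq:Tcond_subs_constr_theorem}): the former presumably handles the ``excitation'' lower bound (the martingale-difference part of $X_l X_l^\top - \Gamma$, controlled via a covering/$\epsilon$-net argument over the $\tau d^2$-dimensional sphere, giving the $\exp(-c\theta\tau d)$ term), and the latter controls the crude spectral norm of the weighted noise-cross terms $\sum_l w_l X_l E_l^\top$-type quantities. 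Combining via a union bound then yields $\prob(\calE_3)\ge 1-2\exp(-c\theta\tau d)-\delta$. The three branches of the max in \eqref{eq:Tcond_subs_constr_theorem} should correspond respectively to: the variance/net term $\sqrt{\theta^3\tau^3 dT/m}$, a lower-order $\theta^2\tau^2 d/m$ term, and the log-dependent term carrying the $\delta$ and the $\gamma_2$ trace sum (bounding how large the $X_l X_l^\top$ can be).

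The main obstacle will be the off-diagonal / cross-node control: unlike in Lemma \ref{lem:laplace_smooth_qq_lowbd}, here the Gram matrices of \emph{different} nodes get linearly combined with signed coefficients $v_{m-i+1,l}v_{m-j+1,l}$, so one cannot simply union-bound the per-node event $X_l X_l^\top \succeq (T/4) I_d$ and be done — the delocalization hypothesis is precisely what makes these mixed sums tractable, and getting the right $\tau/m$ (rather than $\tau^2/m$, $\tau^3/m$) scaling would require a sharper second-moment analysis, which is why the authors flag \eqref{eq:Tcond_subs_constr_theorem} as likely improvable. I would also need to be careful that the noise terms $\eta_{l,t}$ inside $X_l = [x_{l,1}\cdots x_{l,T}]$ make $X_l X_l^\top$ a sum of dependent (martingale) increments, so the concentration must be of self-normalized / martingale type rather than i.i.d.; this is presumably the content of Proposition \ref{prop:conc_excit_subsp_constr}, which I would cite rather than reprove here.
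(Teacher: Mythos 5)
There is a genuine gap in your plan, at the step where you split $\Vtau^\top Q^\top Q\Vtau$ into $\Vtau^\top\expec[Q^\top Q]\Vtau$ plus a fluctuation and propose to bound the fluctuation's operator norm by $T/4$. The first half is fine ($\expec[X_lX_l^\top]=\sum_{t=1}^{T}\Gamma_{t-1}(A_l^*)\succeq T I_d$, so $\Vtau^\top\expec[Q^\top Q]\Vtau\succeq T I_{\tau d^2}$ because $\Vtau$ has orthonormal columns), but the fluctuation $X_lX_l^\top-\expec[X_lX_l^\top]$ is \emph{not} of order $T$ for the marginally stable systems this paper targets: for $A_l^*=I_d$ the states are random-walk partial sums, $\expec[X_lX_l^\top]\asymp T^2 I_d$, and the deviation of $X_lX_l^\top$ from its mean is itself of order $T^2$, which swamps $T/4$. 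No choice of the constant $C$ in \eqref{eq:Tcond_subs_constr_theorem} rescues this, since $\gamma_2$ enters that condition only inside a logarithm. This is precisely why the paper does not center at the expectation. Instead it decomposes $Q^\top Q=P+(Z+Z^\top)+N$ along the recursion $x_{l,t}=A_l^*x_{l,t-1}+\eta_{l,t}$: the signal autocorrelation $P=\diag(A_lY_{l,T-1}A_l^\top\otimes I_d)$ is kept only as a \emph{nonnegative} quantity $\kappa^2:=u^\top\Vtau^\top P\Vtau u\ge 0$ that is never two-sidedly controlled; the pure-noise Gram $N$ supplies the $\tfrac{3T}{4}I_{\tau d^2}$ lower bound via Proposition \ref{prop:conc_excit_subsp_constr} (Bernstein plus a net, yielding the $\exp(-c\theta\tau d)$ term); and the cross term $Z$ is bounded \emph{relative to} $P+TI$ by the self-normalized martingale bound of Proposition \ref{prop:spec_norm_bd_subsp_constr}, after which Proposition \ref{prop:useful_ineq_prop} (from \cite{Sarkar19}) converts this into $\norm{\Vtau^\top Z^\top\Vtau u}_2\le\zeta(\kappa^2+T)^{1/2}$, so that $\kappa^2-2\zeta(\kappa^2+T)^{1/2}+\tfrac{3T}{4}\ge\tfrac{T}{4}$ once $\zeta\le\sqrt{T}/4$, which is exactly what \eqref{eq:Tcond_subs_constr_theorem} guarantees.

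You did correctly identify the two auxiliary propositions and roughly what each does, and your remarks about delocalization entering through $\norm{f_l}_2^2\le\theta\tau/m$ and about the martingale (self-normalized) nature of the concentration are on target. The missing idea is the absorption device: one must exploit that the signal part is positive semidefinite and bound the cross term in a norm weighted by the signal part itself, rather than trying to show that the whole of $Q^\top Q$ concentrates around its mean.
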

The proof is deferred to Appendix \ref{appsec:sub_constr_proofs} -- it is essentially based on adapting the arguments in \cite[Section 9]{Sarkar19} to our setup.

Using Lemma \ref{lem:eigval_control_mat_subspace_method} and Lemma \ref{lem:Qmat_bd}, we can easily bound $\norm{e_2}_2$.
\begin{lemma}[Bound $\norm{e_2}_2$] \label{lem:e_2_bd_sub_constr}
Let $G$ be $\theta$-delocalized and suppose Assumption \ref{assum:smoothness} holds. Recall the events $\calE_3$, $\calE_{2b}$ defined in \eqref{eq:event_eigval_qqmat} and   \eqref{eq:qq_lowner_bd} respectively. Then,  

    \begin{equation*}
    \calE_3 \cap \calE_{2b} \implies   \norm{e_2}_2^2 \leq \frac{\gamma_1^2(T,m,\delta)}{T^2} \norm{e_1}_2^2 \leq C \frac{\gamma_1^2(T,m,\delta)}{T^2} \frac{S_m}{\lambda_{m-\tau}}\indic_{\tau \leq m-1}
    \end{equation*}
    for $\gamma_1(T,m,\delta)$ as in \eqref{eq:gamma_def_laplsmooth_strict_stab}.
\end{lemma}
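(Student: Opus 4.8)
The idea is to chain a few operator-norm bounds through the closed form of $e_2$ in \eqref{eq:err_decomp_subsp_const}, using $\calE_3$ from Lemma \ref{lem:eigval_control_mat_subspace_method} to control $\Mtau^\dagger$ and the Löwner upper bound $\calE_{2b}$ from Lemma \ref{lem:Qmat_bd} to control $Q^\top Q$, and then feed the result into Lemma \ref{lem:subsp_e_1}. So throughout we condition on $\calE_3 \cap \calE_{2b}$. On $\calE_3$, the compression $\Vtau^\top Q^\top Q \Vtau$ is strictly positive definite with $\Vtau^\top Q^\top Q \Vtau \succeq \frac{T}{4} I_{\tau d^2}$; hence (as in the derivation of \eqref{eq:subs_analysis_Mtau_dag_proj}) $\Mtau = \Vtau(\Vtau^\top Q^\top Q \Vtau)\Vtau^\top$ has all its nonzero eigenvalues $\geq T/4$, so $\norm{\Mtau^\dagger}_2 \leq 4/T$, and moreover $\Mtau^\dagger \Mtau = \Ptilproj$, which is exactly what makes $e_1 = -\Ptilproj_{\perp} a^*$ on this event (as already observed in the proof of Lemma \ref{lem:subsp_e_1}).

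Next I would estimate $\norm{e_2}_2$ by submultiplicativity. Since $\norm{\Ptilproj}_2 = 1$,
$\norm{e_2}_2 = \norm{\Mtau^{\dagger} \Ptilproj Q^\top Q \Ptilproj_{\perp} a^*}_2 \leq \norm{\Mtau^\dagger}_2 \, \norm{\Ptilproj}_2 \, \norm{Q^\top Q}_2 \, \norm{\Ptilproj_{\perp} a^*}_2 \leq \tfrac{4}{T}\norm{Q^\top Q}_2 \norm{\Ptilproj_{\perp} a^*}_2$ (one can equivalently simplify $\Mtau^\dagger \Ptilproj = \Vtau(\Vtau^\top Q^\top Q \Vtau)^{-1}\Vtau^\top$ using $\Vtau^\top\Vtau = I$, which gives the same factor $4/T$). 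On $\calE_{2b}$ the event \eqref{eq:qq_lowner_bd} furnishes $\norm{Q^\top Q}_2 \lesssim \gamma_1(T,m,\delta)$ with $\gamma_1$ as in \eqref{eq:gamma_def_laplsmooth_strict_stab}, and on $\calE_3$ we have $\norm{\Ptilproj_{\perp} a^*}_2 = \norm{e_1}_2$ from the previous paragraph. Combining, $\norm{e_2}_2 \lesssim \frac{\gamma_1(T,m,\delta)}{T}\norm{e_1}_2$, i.e. $\norm{e_2}_2^2 \lesssim \frac{\gamma_1^2(T,m,\delta)}{T^2}\norm{e_1}_2^2$, and the final inequality in the statement then follows by inserting the bound $\norm{e_1}_2^2 \leq C\frac{S_m}{\lambda_{m-\tau}}\indic_{\tau \leq m-1}$ from Lemma \ref{lem:subsp_e_1}.

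There is no real obstacle in this lemma itself: all the difficulty is already packaged in the prerequisites, namely the lower bound on $\lambda_{\min}(\Vtau^\top Q^\top Q \Vtau)$ in Lemma \ref{lem:eigval_control_mat_subspace_method} (which is precisely what imposes the condition \eqref{eq:Tcond_subs_constr_theorem} on $T$) and the upper Löwner bound on $Q^\top Q$ in Lemma \ref{lem:Qmat_bd}. The only point that needs a bit of care is getting the first displayed inequality without a leading constant; this should be a matter of how \eqref{eq:qq_lowner_bd} normalizes its bound (e.g. $Q^\top Q \preceq \frac14\gamma_1(T,m,\delta)(I_m\otimes I_{d^2})$, so that $\frac{4}{T}\cdot\frac{\gamma_1}{4} = \frac{\gamma_1}{T}$ produces exactly $\norm{e_2}_2^2 \leq \frac{\gamma_1^2}{T^2}\norm{e_1}_2^2$), and in any case any stray constant can be absorbed into the $C$ multiplying $\frac{S_m}{\lambda_{m-\tau}}$ in the second inequality.
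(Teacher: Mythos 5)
Your proof is correct and follows essentially the same route as the paper: both bound $\norm{e_2}_2$ by submultiplicativity, using $\lambda_{\min}(\Vtau^\top Q^\top Q \Vtau)\ge T/4$ on $\calE_3$ to get $\norm{\Mtau^\dagger}_2\le 4/T$, the L\"owner bound on $\calE_{2b}$ to get $\norm{Q^\top Q}_2\le \gamma_1$, and Lemma \ref{lem:subsp_e_1} for $\norm{e_1}_2$. Your remark about the constant is on point — the paper's own chain actually produces $\tfrac{16\gamma_1^2}{T^2}\norm{e_1}_2^2$ (the event \eqref{eq:qq_lowner_bd} is \emph{not} normalized by $1/4$), so the intermediate inequality in the lemma statement silently drops a factor of $16$, which is harmless since it is absorbed into $C$ in the final bound.
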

\begin{proof}
    Using the fact $\Ptilproj = \Vtau \Vtau^\top$, notice that 
    \begin{align} \label{eq:e2_bd_temp_1}
        (\Ptilproj Q^\top Q \Ptilproj)^\dagger = \Big[\Vtau (\Vtau^\top Q^\top Q \Vtau) \Vtau^\top \Big]^{\dagger} = \Vtau (\Vtau^\top Q^\top Q \Vtau)^\dagger \Vtau^\top.
    \end{align}
    Then we can bound $\norm{e_2}_2$ as
    \begin{align*}
        \norm{e_2}_2^2 &= \norm{(\Ptilproj Q^\top Q \Ptilproj)^\dagger \Ptilproj Q^\top Q \Ptilproj_{\perp} a^*}_2 \\
        &\leq \norm{(\Ptilproj Q^\top Q \Ptilproj)^\dagger \Ptilproj Q^\top Q}_2^2  \ \norm{e_1}_2^2 \\
        &\leq \norm{(\Ptilproj Q^\top Q \Ptilproj)^\dagger}_2^2 \ \norm{ \Ptilproj Q^\top Q}_2^2 \ \norm{e_1}_2^2  \\
        &= \norm{(\Vtau^\top Q^\top Q \Vtau)^\dagger}_2^2 \ \norm{\Vtau^\top Q^\top Q}_2^2 \ \norm{e_1}_2^2  \tag{using \eqref{eq:e2_bd_temp_1}} \\
        &\leq \frac{16 \gamma_1^2(T,m,\delta)}{T^2} \ \norm{e_1}_2^2,  \tag{on $\calE_3 \cap \calE_{2b}$} %
    \end{align*}
    and bound $\norm{e_1}_2$ using Lemma \ref{lem:subsp_e_1}.
\end{proof}
It now remains to bound $\norm{e_3}_2$. Notice that $\calE_3$ implies $\Vtau^\top Q^\top Q \Vtau$ is invertible, and so
\begin{align*}
    \norm{e_3}_2 = \norm{\Vtau (\Vtau^\top Q^\top Q \Vtau)^{-1} \Vtau^\top Q^\top \eta}_2 &\leq \frac{\norm{(\Vtau^\top Q^\top Q \Vtau)^{-1/2} \Vtau^\top Q^\top \eta}_2}{\sqrt{\lambmin(\Vtau^\top Q^\top Q \Vtau)}}  \\
    &\leq \frac{2}{\sqrt{T}} \norm{(\Vtau^\top Q^\top Q \Vtau)^{-1/2} \Vtau^\top Q^\top \eta}_2.
\end{align*}
To bound the last term above, note that $\calE_3$ also implies 
\begin{align}
     (\Vtau^\top Q^\top Q \Vtau + (T/4)I_{\tau d^2})^{-1} &\succeq \frac{1}{2} (\Vtau^\top Q^\top Q \Vtau)^{-1} \nonumber \\
    \implies \norm{(\Vtau^\top Q^\top Q \Vtau)^{-1/2}\Vtau^\top Q^\top \eta}_{2} &\leq \sqrt{2} \norm{(\Vtau^\top Q^\top Q \Vtau + (T/4)I_{\tau d^2})^{-1/2} \Vtau^\top Q^\top \eta}_{2}. \label{eq:e2_bd_temp_2}
\end{align}
The RHS of \eqref{eq:e2_bd_temp_2} is bounded as follows.
\begin{proposition} \label{prop:e3_bd_imp_event_subsp}
Let $G$ be $\theta$-delocalized. For any $\delta \in (0,1)$, denote the event 
    \begin{align*}
        \calE_4 := \set{\norm{(\Vtau^\top Q^\top Q \Vtau + (T/4)I_{\tau d^2})^{-1/2} \Vtau^\top Q^\top \eta}_{2}^2 \leq C \tau R^2  d^2 \left[\log\Big(\frac{1}{\delta} \Big) + \log\Big(\frac{\theta \tau \gamma_3(T,m)}{mT}  + 1 \Big) \Big)\right]},
    \end{align*}
    for $\gamma_3(T,m)$ as in \eqref{eq:gamma_conds_sub_theorem}, and a suitably large constant $C > 0$. Then, $\prob(\calE_4) \geq  1-2\delta$.
\end{proposition}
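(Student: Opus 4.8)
\textbf{Proof proposal for Proposition \ref{prop:e3_bd_imp_event_subsp}.}

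The plan is to bound $\norm{(\Vtau^\top Q^\top Q \Vtau + (T/4)I_{\tau d^2})^{-1/2} \Vtau^\top Q^\top \eta}_{2}^2$ using a self-normalized vector-valued martingale concentration inequality, in the spirit of \cite[Theorem 1]{abbasi11} (the same tool invoked for Proposition \ref{prop:bd_self_norm_laplsmooth}). First I would set up the martingale structure: write $W_\tau := \Vtau^\top Q^\top Q \Vtau$ and observe that $\Vtau^\top Q^\top \eta = \sum_{l,t}$ (sums of terms built from the regressors $x_{l,t}$ and the noise $\eta_{l,t+1}$), where the noise increments are conditionally $R$-subgaussian given the past (using Assumption \ref{eq:eta_assump} and the $R$-subgaussian hypothesis). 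Because $\Vtau^\top Q^\top Q \Vtau = \sum_{l,t}$ of the outer products of the corresponding regressor blocks, the regularizer $(T/4)I_{\tau d^2}$ plays the role of the prior covariance $V$ in the abbasi11 bound, and the self-normalized martingale tail gives, with probability at least $1-\delta$,
\begin{equation*}
\norm{W_\tau^{1/2}(W_\tau + (T/4)I)^{-1}\Vtau^\top Q^\top\eta}_2^2 \lesssim R^2\left[\log\frac{1}{\delta} + \log\frac{\det(W_\tau + (T/4)I)}{\det((T/4)I)}\right].
\end{equation*}
This already controls the quantity of interest up to the issue that the stated $\calE_4$ has a deterministic bound inside the logarithm rather than the random $\det$ ratio.

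The second step is to convert the log-determinant term into the explicit quantity $\log(\frac{\theta\tau\gamma_3(T,m)}{mT}+1)$ appearing in $\calE_4$. Using $\log\frac{\det(W_\tau + (T/4)I)}{\det((T/4)I)} = \sum_{i}\log(1 + \frac{4\lambda_i(W_\tau)}{T}) \le \frac{\tau d^2}{1}\log(1 + \frac{4}{T}\cdot\frac{\Tr(W_\tau)}{\tau d^2})$ by concavity (or more simply $\le \tau d^2 \log(1 + \frac{4\norm{W_\tau}_2}{T})$), it suffices to bound $\Tr(W_\tau)$ or $\norm{W_\tau}_2$ deterministically in expectation and then with high probability. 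Here $\Tr(\Vtau^\top Q^\top Q \Vtau) = \Tr(\Ptilproj Q^\top Q \Ptilproj) \le \Tr(Q^\top Q) = \sum_l \Tr(X_l X_l^\top) = \sum_l \sum_{t=1}^{T}\norm{x_{l,t}}_2^2$, and $\expec\norm{x_{l,t}}_2^2 = \Tr(\Gamma_{t-1}(A_l^*))$ under Assumption \ref{eq:eta_assump}; this produces exactly $\gamma_3(T,m) = \sum_l\sum_{t=0}^{T-1}\Tr(\Gamma_t(A_l^*))$. A high-probability version follows from a Hanson–Wright / Markov-type bound on $\sum_{l,t}\norm{x_{l,t}}_2^2$ (absorbing the failure probability into $\delta$, giving the $1-2\delta$). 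The factor $\frac{\theta\tau}{m}$ relative to a naive $\Tr(W_\tau)$ bound should come from using the $\theta$-delocalization of $\Vtau$ — i.e., the columns of $\Vtau = [v_m\cdots v_{m-\tau+1}]\otimes I_{d^2}$ spread the mass of $Q^\top Q$ so that the relevant projected operator norm/trace picks up only a $\theta\tau/m$ fraction; this is where Definition \ref{def:deloc_eigvecs} and estimates analogous to those in Lemma \ref{lem:eigval_control_mat_subspace_method} and Propositions \ref{prop:conc_excit_subsp_constr}, \ref{prop:spec_norm_bd_subsp_constr} get reused.

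Finally I would combine: on the concentration event, $\norm{(\Vtau^\top Q^\top Q \Vtau + (T/4)I)^{-1/2}\Vtau^\top Q^\top\eta}_2^2 = \norm{(W_\tau+(T/4)I)^{-1/2}\Vtau^\top Q^\top\eta}_2^2 \le \norm{W_\tau^{1/2}(W_\tau+(T/4)I)^{-1}\Vtau^\top Q^\top\eta}_2^2 + (\text{term from the }(T/4)I\text{ part})$ — more carefully, one writes $(W_\tau+(T/4)I)^{-1} = (W_\tau+(T/4)I)^{-1/2}(W_\tau+(T/4)I)^{-1/2}$ and bounds $\norm{(W_\tau+(T/4)I)^{-1/2}\Vtau^\top Q^\top\eta}_2^2$ directly by the abbasi11 inequality with prior $(T/4)I$, which is in fact the cleanest route and avoids splitting. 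Substituting the deterministic/high-probability bound on the log-det term yields the claimed form of $\calE_4$ with $\prob(\calE_4)\ge 1-2\delta$ (one $\delta$ for the martingale tail, one for the trace concentration).

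\textbf{Main obstacle.} The technically delicate part is the passage from the random $\log\det$ term to the stated bound with the explicit $\frac{\theta\tau}{m}$-scaling: one must show that projecting $Q^\top Q$ onto the $\tau$-dimensional low-frequency subspace genuinely reduces the effective size by the delocalization factor (rather than merely bounding by $\Tr(Q^\top Q)$, which would lose the $\tau/m$ and give a weaker, though still finite, logarithmic term). Getting the dependence on $\tau, m, \theta$ sharp here requires carefully reusing the concentration machinery behind Lemma \ref{lem:eigval_control_mat_subspace_method}, and is the step most likely to need the full strength of Propositions \ref{prop:conc_excit_subsp_constr} and \ref{prop:spec_norm_bd_subsp_constr}.
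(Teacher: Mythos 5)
Your proposal is correct and follows essentially the same route as the paper: the abbasi-style self-normalized martingale bound with prior $(T/4)I_{\tau d^2}$ (one $\delta$), followed by an upper L\"owner bound on $\Vtau^\top Q^\top Q\Vtau$ obtained from $\norm{f_l}_2^2 \leq \theta\tau/m$ and a Markov-inequality bound on $\sum_t \norm{x_{l,t}}_2^2$ with expectation $\gamma_3(T,m)$ (the second $\delta$). The only remark is that the step you flag as the "main obstacle" is in fact elementary in the paper — the $\theta\tau/m$ factor falls out of $\norm{(f_lf_l^\top)\otimes M}_2 = \norm{f_l}_2^2\norm{M}_2$ plus the triangle inequality over $l$, and does not require the machinery of Propositions \ref{prop:conc_excit_subsp_constr} or \ref{prop:spec_norm_bd_subsp_constr}.
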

The proof is deferred to Appendix \ref{appsec:sub_constr_proofs}. Based on the preceding discussion, it follows that 
\begin{equation} \label{eq:e3_bd_subsp_fin}
   \calE_3 \cap \calE_4  \implies \norm{e_3}_2^2 \leq C\frac{\tau R^2  d^2}{T} \left[\log\Big(\frac{1}{\delta} \Big) + \log\Big(\frac{\theta \tau \gamma_3(T,m)}{mT}  + 1 \Big) \right].
\end{equation}
The statement of the theorem now follows readily using \eqref{eq:err_decomp_subsp_const}, Lemma's \ref{lem:subsp_e_1}, \ref{lem:eigval_control_mat_subspace_method} and \ref{lem:e_2_bd_sub_constr}, Proposition \ref{prop:e3_bd_imp_event_subsp} and \eqref{eq:e3_bd_subsp_fin}. In particular, the success probability equals $\prob(\calE_3 \cap \calE_{2b} \cap \calE_4) \geq 1 - \prob(\calE_3^c) - \prob(\calE_{2b}^c) - \prob(\calE_4^c)$. 

%---------------------------
% Proof of Corollary
%---------------------------
\subsection{Proof of Corollary \ref{corr:subs_constr_strict_stab_path}}
We first recall that $\lambda_l \geq c_1 (m-l)^2/m^2$ for path graphs (as explained earlier in Section \ref{subsec:proof_corr_laplsmooth_path}), for some constant $c_1 > 0$. Also recall the following expression for the eigenvectors of the path graph \citep{brouwer12}. For each $l=1,\dots,m$
\begin{equation*}
    v_{m,l} = \frac{1}{\sqrt{m}}, \quad v_{m-i,l} = \sqrt{\frac{2}{m}} \cos\Big( \frac{(2l-1)\pi i}{2m} \Big); \quad i=1,\dots,m-1.
\end{equation*}
It is then not difficult to verify that $G$ satisfies \eqref{eq:theta_deloc_cond} with $\theta = 2$.
\subsubsection{Proof of Part $1$}
Since $\tau \leq m$, note that \eqref{eq:Tcond_subs_constr_theorem} is ensured if 
\begin{equation} \label{eq:corr_subs_constr_temp1} 
    T \geq C R^2\max \set{\max \set{\sqrt{\frac{\tau^3 d T}{m}} , \frac{\tau^2 d}{m}}, \frac{d \tau^2}{m} \left[\log\left(\frac{1}{\delta}\right) + \log\left(1 + \frac{\gamma_2(m,T)}{T} \right) \right]  }.
\end{equation}
Now $\sqrt{\frac{\tau^3 d T}{m}} \geq \frac{\tau^2 d}{m}$ is equivalent to $\tau \leq mT/d$. Since $\xi(m,T,\delta) \geq 1$ for a suitably large constant $c_1$, hence if the conditions $\tau \leq mT/d$ and  
\begin{align} \label{eq:corr_subs_constr_temp2} 
\tau \leq \Big(\frac{m T}{C^2 d R^4 \xi(m,T,\delta)} \Big)^{1/3} =: \bar{\tau}^*
\end{align}
are satisfied, then clearly \eqref{eq:corr_subs_constr_temp1} is ensured. This latter condition  in \eqref{eq:corr_subs_constr_temp2} is sufficient (i.e., subsumes $\tau \leq mT / d$) when $m$ is large enough. More precisely, 
\begin{align} \label{eq:corr_subs_constr_temp3}
    m \geq \frac{d}{C R^2 T \xi^{1/2}(m,T,\delta)}.
\end{align}
To summarize, if $m$ satisfies \eqref{eq:corr_subs_constr_temp3}, and $\tau$ satisfies \eqref{eq:corr_subs_constr_temp2}, then the condition \eqref{eq:Tcond_subs_constr_theorem} is ensured.

Next, we need to ensure that $\bar{\tau}^* \in [1,m]$ so that $\lfloor \bar{\tau}^* \rfloor \in [m]$. This is the case provided $m$ additionally satisfies
\begin{align} \label{eq:corr_subs_constr_temp4}
    m \geq \max\set{\frac{dR^4 \xi(m,T,\delta)}{T}, \frac{1}{R^2 \xi^{1/2}(m,T,\delta)} \sqrt{\frac{T}{d}}}.
\end{align}
Since $R\gtrsim 1$ and $\xi(m,T,\delta) \geq 1$ one can verify that 
the condition on $m$ in \eqref{eq:m_cond_corr_subs_constr} subsumes \eqref{eq:corr_subs_constr_temp3} and \eqref{eq:corr_subs_constr_temp4}. 
Now consider the function $f(x) = \frac{m S_m}{x^2} + \frac{R^2 d^2 x}{T m}$ as a `surrogate' for the error bound in Theorem \ref{thm:thm_subs_constr_strict_stab}, for $x \in \matR^+$, and note that it is minimized (over $\matR^+$) at $x^* = (\frac{2 m^2 S_m T}{R^2 d^2})^{1/3}$. Hence it follows that 
\begin{equation*}
    \tilde{\tau}^* = \argmin{1 \leq x \leq \bar{\tau}^*} \  f(x) = \min\set{\bar{\tau}^*, \max\set{x^*,1}},
\end{equation*}
and we see that $\tau^* = \lfloor \tilde{\tau}^* \rfloor \in [m]$. This completes the proof of the first part of the corollary.

\subsubsection{Proof of Part $2$}
To show the second part, let us bound $\gamma_2(m,T)$ and $\gamma_3(m,T)$ when each $A^*_l$ is marginally stable, i.e. satisfies \eqref{eq:stable_mat_specrad}. In this case, we obtain via Proposition \ref{prop:stable_mat_trace_bound} (see also \eqref{eq:trace_bd_stable_temp}) the bounds 
\begin{align} \label{eq:gamma23_bds}
    \gamma_i(m,T) \lesssim T \sum_{l=1}^m d T^{\alpha_l} \kappa^2(P_l) \leq d m T^{\alpha + 1} \kappa^2_{\max}, \quad \text{for \rev{$i=2,3$}}.
\end{align}
For the term $\xi(m,T,\delta)$, we note that
\begin{equation} \label{eq:xi_bd}
    (1 \leq ) \ \log(1/\delta) \leq \xi(m,T,\delta) \lesssim \log\Big(\frac{d mT^{\alpha} \kappa^2_{\max}}{\delta} \Big)
\end{equation}
where the upper bound follows using \eqref{eq:gamma23_bds}. Then following the same steps as earlier, it is easy to see that for the choice 
    \begin{align} \label{eq:tau_tilde_exp_subs_cor_temp1}
        \tilde{\tau}^* = \min\set{\underbrace{\left(\frac{m T}{C_3' dR^4 \log\Big(\frac{dmT^{\alpha} \kappa^2_{\max}}{\delta}\Big)}\right)^{1/3}}_{= \bar{\tau}^*} , \ \max\set{\Big(\frac{2 m^2 S_m T}{R^2 d^2}\Big)^{1/3},1}},
    \end{align}
the condition on $m$ in \eqref{eq:m_con_subs_cor_stab} ensures \eqref{eq:m_cond_corr_subs_constr} and $\bar{\tau}^* \in [1,m]$. Thus $\tau^* = \lfloor \tilde{\tau}^* \rfloor \in [m]$ as well. 

The regime for $S_m$ in the statement follows from the condition $\bar{\tau}^* \leq \Big(\frac{2 m^2 S_m T}{R^2 d^2}\Big)^{1/3}$ with $\bar{\tau}^*$ as in \eqref{eq:tau_tilde_exp_subs_cor_temp1}, which implicitly implies $\Big(\frac{2 m^2 S_m T}{R^2 d^2}\Big)^{1/3} \geq 1$ (since $\bar{\tau}^* \geq 1$). Hence in this smoothness regime, $\tilde{\tau}^* = \bar{\tau}^*$. Plugging \rev{$\tau^* = \lfloor \tilde{\tau}^* \rfloor$} in \eqref{eq:lapl_smooth_path_graph_mainbd_1}, and using the bounds in \eqref{eq:gamma23_bds}, \eqref{eq:xi_bd} \rev{and \eqref{eq:gamma_1_bd_stable}}, we obtain the stated error bound after some minor simplifications.

% Experiments
%---------------
% Simulations
%--------------
\section{Simulations} \label{sec:sims}
We now provide some simulation results on a synthetic example to empirically validate the performance of the methods. The setup is as follows. We will consider $G$ to be the path graph on $m$ vertices. The matrices $A^*_1,\dots,A^*_m$ are generated as in Example \ref{ex:path_graph_holder} where 
\begin{equation*}
    f^*_{i,j} (x) = 4x^{\beta} - \sin\Big(\frac{2\pi i j}{d} x \Big)
\end{equation*}
for $\beta \in (0,1]$. Note that $f^*_{ij}$ is $\beta$-H\"older continuous and $S_m = O(m^{1-2\beta})$. In order to ensure that $\rho(A^*_l) \leq 1$ for each $l$ (which is what we assume in our analyses), we normalize $A^*_l$ by $\max\set{\rho(A^*_l)}_{l=1}^m$.

We fix $d = 10$ throughout, and choose $m \in \set{5,10,20,40,60,80,100,150}$. For a given value of $m$, we repeat the following steps $30$ times.
\begin{itemize}
    \item Generate trajectories for each $l \in [m]$ as in \eqref{eq:mult_linsys} with $\eta_{l,t} \stackrel{iid}{\sim} \calN(0,I_d)$.

    \item Obtain estimates for Laplacian smoothing and for the subspace constrained estimators. Here we set $\lambda = 20 * m^{4\beta/5}$ for the former, and $\tau = \min\set{[1.5 *  m^{1/3}], m}$ for the latter, with $[m]$ denoting the round operator (to the nearest integer). Note that the dependence on $m$ (and $S_m$) is consistent with the statement of Corollaries \ref{corr:lapl_smooth_strict_stab} and \ref{corr:subs_constr_strict_stab_path}.

    \item Upon obtaining an estimate $\est{A}$, we compute the root mean squared error (RMSE) which is $\frac{1}{\sqrt{m}} (\sum_{l=1}^m \norm{\est{A}_l - A^*_{l}}_F^2)^{1/2}$.
\end{itemize}
We average the RMSE over the $30$ runs for both methods, and plot it versus $m$. The results are shown in Figure \ref{fig:plots_rmse_versus_m_d_30_runs} for $T = 5$ (left panel) and $T = 10$ (right panel) for $\beta \in \set{1/4, 1/2, 1}$. As expected, the RMSE decreases as $m$ increases. The Laplacian smoothing estimator performs relatively better on the above example, although we did not make an effort to fine-tune the choice of $\lambda, \tau$. Interestingly, this estimator is seen to perform well for even a small value of $T$ which reinforces our earlier remark that the requirement on $T$ in Corollary \ref{corr:lapl_smooth_strict_stab} is an artefact of the analysis.
\begin{figure}[!htp]
\centering
\begin{subfigure}{.5\textwidth}
  \centering
  \includegraphics[width=0.9\linewidth]{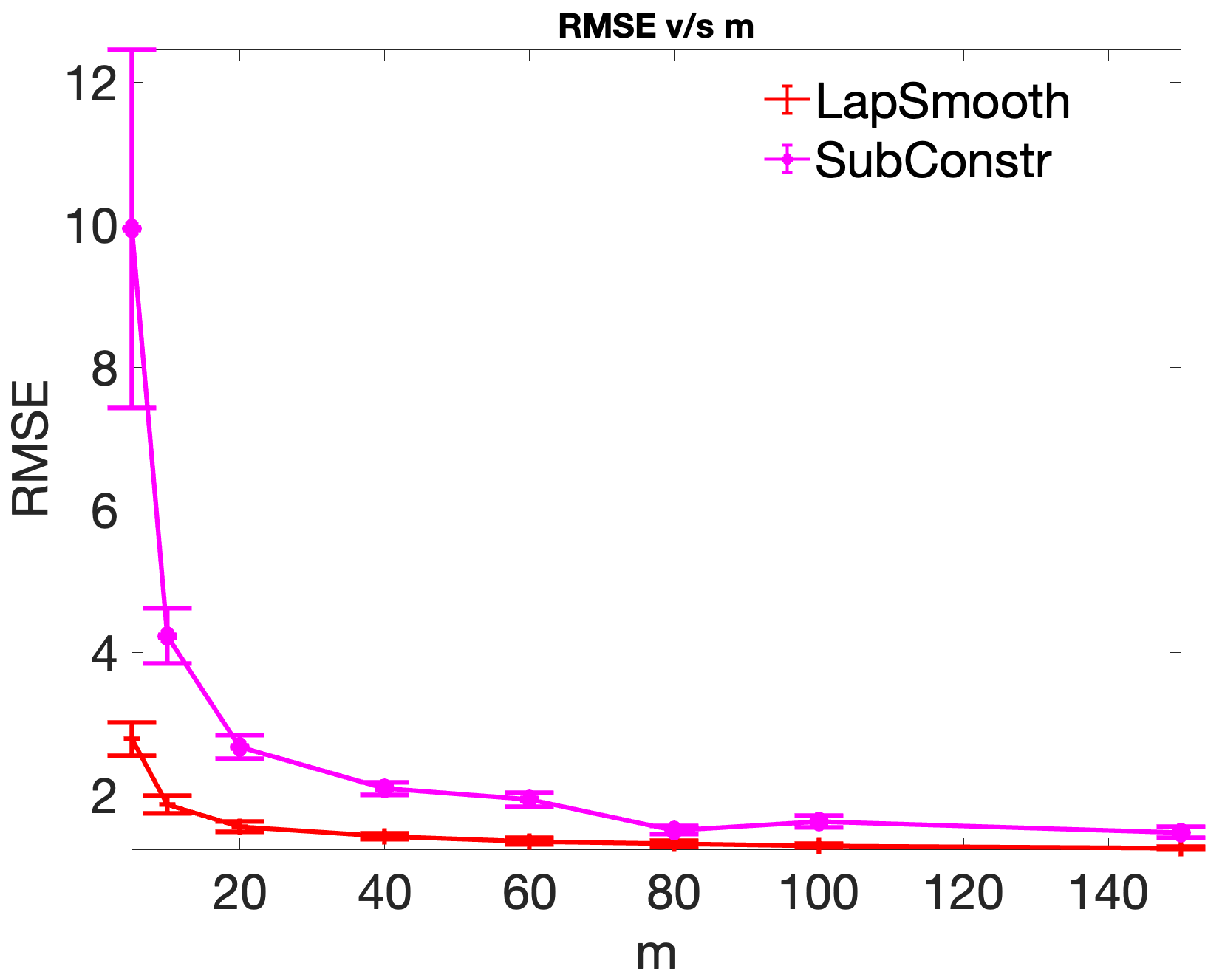}
  \caption{$S_m = O(\sqrt{m})$, $T = 5$}
  %\label{fig:transync_alpha_ST_1}
\end{subfigure}%
\begin{subfigure}{.5\textwidth}
  \centering
  \includegraphics[width=0.9\linewidth]{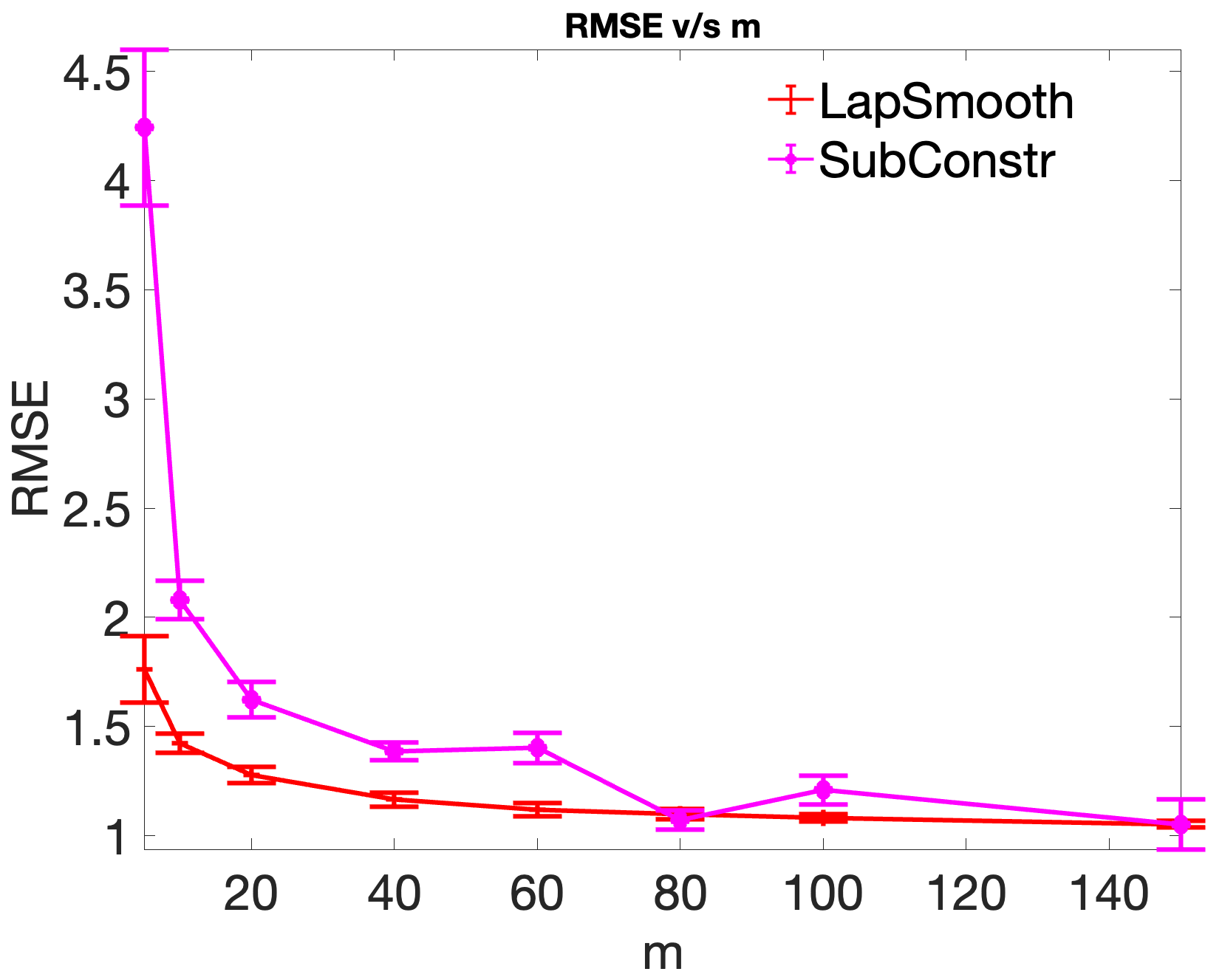}
  \caption{$S_m = O(\sqrt{m})$, $T = 10$}
  %\label{fig:transync_alpha_ST_05}
\end{subfigure}%
\hfill
\begin{subfigure}{.5\textwidth}
  \centering
  \includegraphics[width=0.9\linewidth]{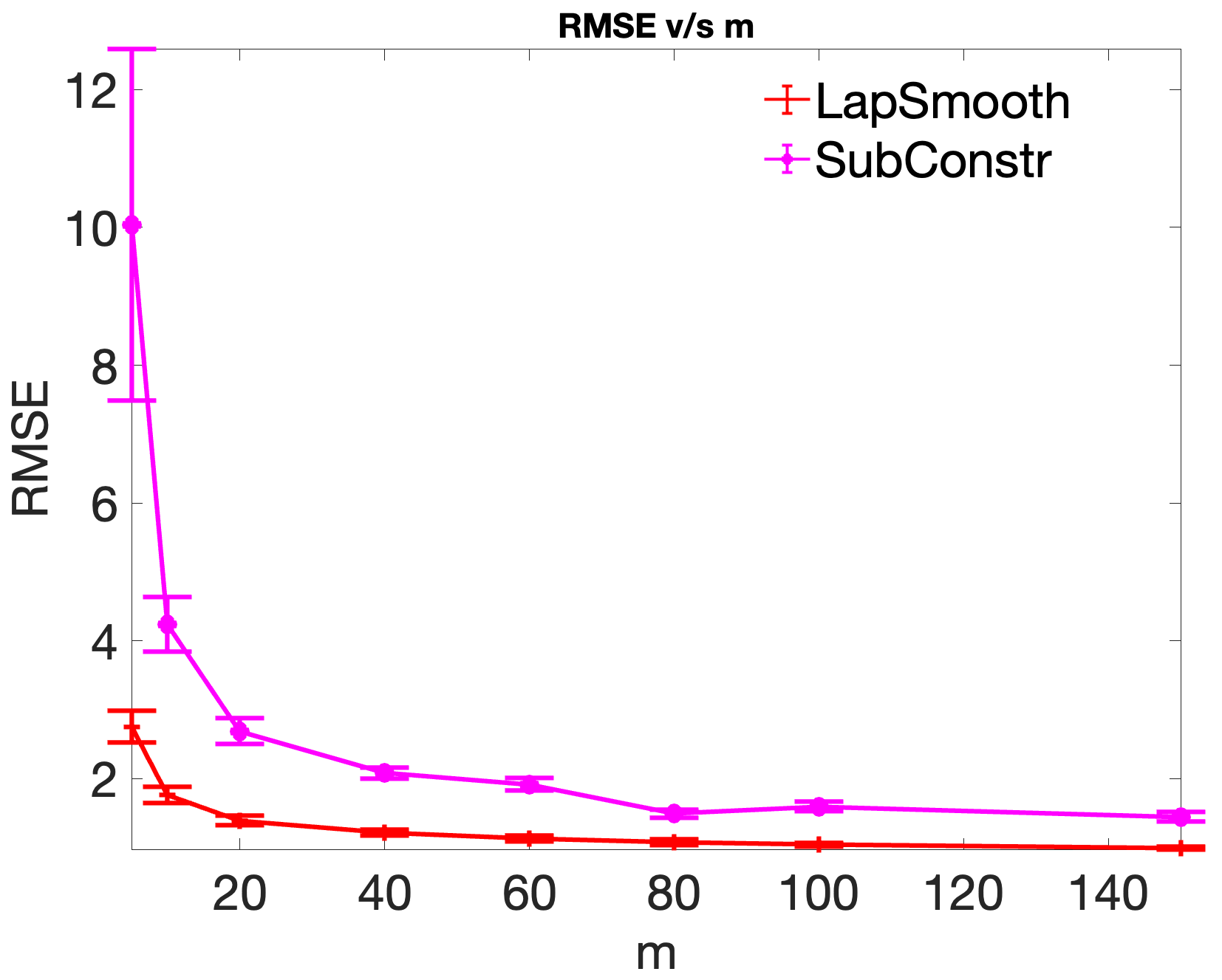}
  \caption{$S_m = O(1)$, $T = 5$}
  %\label{fig:transync_alpha_ST_1}
\end{subfigure}%
\begin{subfigure}{.5\textwidth}
  \centering
  \includegraphics[width=0.9\linewidth]{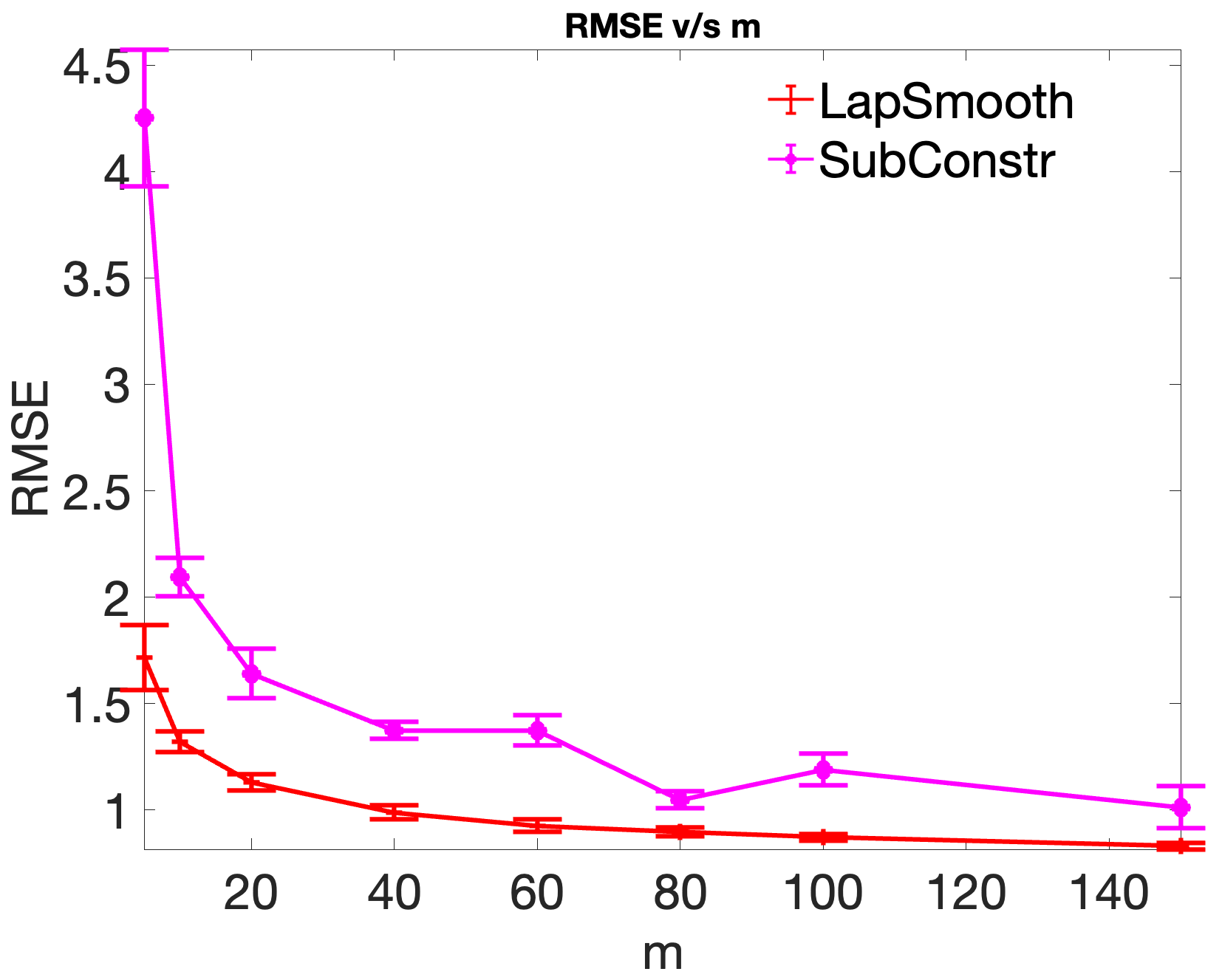}
  \caption{$S_m = O(1)$, $T = 10$}
  %\label{fig:transync_alpha_ST_05}
\end{subfigure}
\hfill
\begin{subfigure}{.5\textwidth}
  \centering
  \includegraphics[width=0.9\linewidth]{figs/PLOTS_2/smooth_ID2_d10_T5_graph_path_numMC30_RMSE.png}
  \caption{$S_m = O(m^{-1})$, $T = 5$}
  %\label{fig:transync_alpha_ST_1}
\end{subfigure}%
\begin{subfigure}{.5\textwidth}
  \centering
  \includegraphics[width=0.9\linewidth]{figs/PLOTS_2/smooth_ID2_d10_T10_graph_path_numMC30_RMSE.png}
  \caption{$S_m = O(m^{-1})$, $T = 10$}
  %\label{fig:transync_alpha_ST_05}
\end{subfigure}
\caption{RMSE versus $m$ with $d=10$. Results averaged over $30$ trials with error bars depicting standard deviation.}
\label{fig:plots_rmse_versus_m_d_30_runs}
\end{figure}

% Concluding remarks
% Concluding remarks
\section{Concluding remarks}
We considered the problem of joint learning of $m$ LDSs where each LDS --defined by a system matrix $A^*_l$ ($l \in [m]$) -- resides on the node of a graph $G = ([m], \calE)$. Assuming that the matrices $A^*_l$ are smooth w.r.t the graph $G$ -- as in \eqref{eq:quad_var_smooth} with parameter $S_m$ -- we formulate two estimators for jointly estimating $A^*_l$, along with non-asymptotic error bounds on the MSE. For the first estimator, namely Laplacian smoothing, our bounds hold provided $T$ scales logarithmically with $m$, and for a wide range of $S_m$. For the second estimator, our results hold without any requirement on $T$, albeit for a relatively narrow smoothness range, $S_m = o(m^{-1/3})$. For both these estimators, the MSE goes to zero as $m \rightarrow \infty$, at a rate typically polynomially fast w.r.t $m$. In our simulations, we find that both estimators perform well for small values of $T$.

An important direction for future work would be to improve the analyses for both the estimators. In particular, the ultimate goal would be to obtain error bounds on the MSE which hold for all smoothness regimes $S_m = o(m)$, and under no assumption on $T$. Another important direction would be to generalize the analysis to handle the situation where the $A^*_l$'s are not necessarily stable. We believe this can be done by leveraging ideas from the work of \cite{Sarkar19}, although the extension is certainly non-trivial.

\appendix

%
% Laplacian smoothing
%--------------
% Appendix
%
\section{Proofs from Section \ref{sec:laplacian_smooth_analysis}} \label{appsec:proofs_lapl_smooth}
\subsection{Proof of Proposition \ref{prop:bd_self_norm_laplsmooth}}
\rev{Denote the canonical basis of $\matR^m$ by $e_1,\dots,e_m$}. Using the definition of $Q$, we can write 
\begin{align*}
    Q^\top \eta &= \sum_{l=1}^m e_l \otimes \Big(\sum_{t=1}^T (x_{l,t} \otimes I_d) \eta_{l,t+1} \Big) = \sum_{l=1}^m \sum_{t=1}^T e_l \otimes (x_{l,t} \otimes \eta_{l,t+1}), \\
    Q^\top Q &= \sum_{l=1}^m \sum_{t=1}^T e_le_l^\top \otimes ((x_{l,t}x_{l,t}^\top) \otimes I_d) = \blkdiag(X_l X_l^\top \otimes I_d)_{l=1}^m.
\end{align*}
\rev{Denoting $\calF_t := \sigma(\eta_{l,1},\dots,\eta_{l,t})_{l=1}^m$, we obtain a filtration $(\calF_{t})_{t=1}^\infty$}. 

We now proceed to derive an analogous result to \cite[Lemma 9]{abbasi11} for our setting. 
\begin{lemma}
    For any $\mu \in \matR^{md^2}$, consider for any $t \geq 1$,
    \begin{align*}
        M_{t}^\mu = \exp\Bigg( \sum_{s=1}^t \sum_{l=1}^m\left[  \frac{\dotprod{\mu}{e_l \otimes (x_{l,s} \otimes \eta_{l,s+1})}}{R} - \frac{\norm{\mu}_{e_le_l^\top \otimes ((x_{l,s}x_{l,s}^\top) \otimes I_d)}^2}{2} \right] \Bigg).
    \end{align*}
    Let $\stoptime$ be a stopping time w.r.t $(\calF_{t})_{t=1}^\infty$. Then $M_{\stoptime}^\mu$ is well defined and $\expec[M_{\stoptime}^\mu] \leq 1$.
\end{lemma}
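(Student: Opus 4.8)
The plan is to show that, with the convention $M_0^\mu := 1$, the process $(M_t^\mu)_{t \geq 0}$ is a nonnegative supermartingale with respect to $(\calF_t)$ (up to the customary index shift), and then to combine optional stopping with Fatou's lemma. First I would unpack the $s$-th increment in the exponent defining $M_t^\mu$. Writing $\mu = \sum_{l=1}^m (e_l \otimes \mu_l)$ with $\mu_l \in \matR^{d^2}$, and reshaping $\mu_l$ into a $d \times d$ matrix $\tilde{M}_l$ (so that $\mu_l = \vect(\tilde M_l)$), one checks by the standard vectorization identities that $\dotprod{\mu}{e_l \otimes (x_{l,s} \otimes \eta_{l,s+1})} = \dotprod{\tilde M_l\, x_{l,s}}{\eta_{l,s+1}}$ and, by the same token, $\norm{\mu}_{e_le_l^\top \otimes ((x_{l,s}x_{l,s}^\top) \otimes I_d)}^2 = \norm{\tilde M_l\, x_{l,s}}_2^2$. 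Thus, setting $w_{l,s} := \tilde M_l\, x_{l,s}$, the $s$-th summand in the exponent equals $\sum_{l=1}^m\big(\tfrac{1}{R}\dotprod{w_{l,s}}{\eta_{l,s+1}} - \tfrac12\norm{w_{l,s}}_2^2\big)$. The structural facts I would record are that $x_{l,s}$ (hence each $w_{l,s}$) is $\calF_s$-measurable, whereas $(\eta_{l,s+1})_{l=1}^m$ is independent of $\calF_s$ with independent $R$-subgaussian coordinates.

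Next I would bound the conditional moment generating function of the increment. Conditioning on $\calF_s$ freezes the $w_{l,s}$, so by independence across $l$ and across coordinates, together with the bound $\expec[\exp(\beta X)] \leq \exp(\beta^2 R^2/2)$ for $R$-subgaussian $X$,
\[
\expec\Big[\exp\Big(\textstyle\sum_{l=1}^m \tfrac1R \dotprod{w_{l,s}}{\eta_{l,s+1}}\Big)\,\Big|\,\calF_s\Big] \;\leq\; \exp\Big(\textstyle\sum_{l=1}^m \tfrac12\norm{w_{l,s}}_2^2\Big),
\]
which exactly cancels the deterministic term $-\tfrac12\sum_l\norm{w_{l,s}}_2^2$, giving $\expec[\exp(s\text{-th increment})\mid\calF_s]\leq 1$. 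Since $M_{t-1}^\mu$ is $\calF_t$-measurable, and the $t$-th increment involves only $x_{l,t}$ (which is $\calF_t$-measurable) and $\eta_{l,t+1}$, this yields $\expec[M_t^\mu \mid \calF_t] \leq M_{t-1}^\mu$; combined with $M_t^\mu \geq 0$ this makes $(M_t^\mu)$ a nonnegative supermartingale, exactly along the lines of \cite[Lemma 8]{abbasi11}.

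Finally I would pass to the stopping time. Being a nonnegative supermartingale, $(M_t^\mu)$ converges almost surely to a limit $M_\infty^\mu$, so $M_{\stoptime}^\mu$ is well defined even on $\{\stoptime = \infty\}$ (this is the ``well-defined'' part of the claim). The stopped process $t \mapsto M_{\min(\stoptime,t)}^\mu$ is again a nonnegative supermartingale, hence $\expec[M_{\min(\stoptime,t)}^\mu] \leq \expec[M_0^\mu] = 1$ for every $t$; letting $t \to \infty$ and applying Fatou's lemma gives $\expec[M_{\stoptime}^\mu] \leq 1$.

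I expect the only real friction to be the Kronecker-product bookkeeping in the first step — verifying the two identities expressing $\dotprod{\mu}{\cdot}$ and the weighted norm in terms of $\tilde M_l\, x_{l,s}$ — which is elementary but index-heavy; once those are in place, the remainder is the standard supermartingale / optional-stopping argument of \cite{abbasi11}.
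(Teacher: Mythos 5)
Your proposal is correct and follows essentially the same route as the paper: establish that $(M_t^\mu)$ is a nonnegative supermartingale by conditioning on $\calF_t$, using independence across $l$ (and across coordinates) together with the $R$-subgaussian bound to cancel the quadratic compensator, and then invoke the standard optional-stopping/Fatou argument of \cite[Lemma 9]{abbasi11} for the stopped process. The only difference is presentational — you make the Kronecker-product identities $\dotprod{\mu}{e_l \otimes (x_{l,s}\otimes\eta_{l,s+1})} = \dotprod{\tilde M_l x_{l,s}}{\eta_{l,s+1}}$ and $\norm{\mu}^2_{e_le_l^\top\otimes(x_{l,s}x_{l,s}^\top\otimes I_d)} = \norm{\tilde M_l x_{l,s}}_2^2$ explicit and spell out the stopping-time step that the paper defers to \citet{abbasi11}; both identities check out.
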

\begin{proof}
    We will show that $(M_{t}^\mu)_{t=1}^{\infty}$ is a supermartingale. To this end, denote 
    \begin{equation*}
        D_{t}^\mu := \exp\Bigg(  \sum_{l=1}^m\left[  \frac{\dotprod{\mu}{e_l \otimes (x_{l,t} \otimes \eta_{l,t+1})}}{R} - \frac{\norm{\mu}_{e_le_l^\top \otimes ((x_{l,t}x_{l,t}^\top) \otimes I_d)}^2}{2} \right] \Bigg). 
    \end{equation*}
    Observe that $D_{t}^\mu, M_{t}^\mu$ are both $\calF_{t+1}$-measurable. Hence, we obtain
    \begin{align*}
        \expec[D_{t}^\mu \vert \calF_{t}] 
        = \prod_{l=1}^m \expec\left[\exp\Bigg(    \frac{\dotprod{\mu}{e_l \otimes (x_{l,t} \otimes \eta_{l,t+1})}}{R} - \frac{\norm{\mu}_{e_le_l^\top \otimes ((x_{l,t}x_{l,t}^\top) \otimes I_d)}^2}{2} \Bigg) \bigg| \calF_t \right]  
        \leq 1 \ \text{a.s.}
    \end{align*}
    using independence over $l$, and since $\eta_{l,t}$ has $R$-subgaussian entries (for each $l,t$) by assumption. This in turn implies
    \begin{align*}
        \expec[M_{t}^\mu \vert \calF_{t}]  \leq D_1^\mu  D_2^\mu \cdots  D_{t-1}^\mu = M_{t-1}^{\mu}
    \end{align*}
    meaning that $(M_{t}^\mu)_{t=1}^{\infty}$ is a supermartingale, and also $\expec[M_{t}^\mu] \leq 1$. The arguments for showing that $M_{\stoptime}^\mu$ is well-defined, and $\expec[M_{\stoptime}^\mu] \leq 1$ are identical to \cite[Lemma 9]{abbasi11}, and hence omitted.
\end{proof}
From here onwards, we obtain in a completely analogous manner as in the proof of \cite[Theorem 1]{abbasi11}, that 
\begin{equation} \label{eq:Q_bd_temp_1}
    \prob\left(\underbrace{\norm{Q^\top \eta}^2_{{(Q^\top Q + \Mbar )}^{-1}} \leq 2 R^2 \log \left(\frac{\det((Q^\top Q + \Mbar )^{1/2}) \det(\Mbar ^{-1/2})}{\delta} \right)}_{=: \calE_{2a}} \right) \leq \delta
\end{equation}
for any given $\Mbar \succ 0$.

It remains to derive an upper  L{\"o}wner bound on $Q^\top Q$, and to choose $\Mbar$ appropriately. 
\begin{lemma} \label{lem:Qmat_bd}
For any $\delta \in (0,1)$, consider the event
\begin{equation} \label{eq:qq_lowner_bd}
    \calE_{2b} := \set{Q^\top Q \preceq \gamma_1(T,m,\delta) (I_m \otimes I_{d^2})}
\end{equation}
   where $\gamma_1(T,m,\delta)$ is as in \eqref{eq:gamma_def_laplsmooth_strict_stab}. Then there exists a constant $c > 0$ such that for any $\delta \in (0,e^{-c})$, $\prob(\calE_{2b}) \geq 1-\delta$.
\end{lemma}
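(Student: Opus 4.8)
The plan is to control $\norm{Q^\top Q}_2$ by exploiting its block-diagonal structure. Since $Q^\top Q = \blkdiag(X_l X_l^\top \otimes I_d)_{l=1}^m$, we have $\norm{Q^\top Q}_2 = \max_{l \in [m]} \norm{X_l X_l^\top}_2 = \max_{l \in [m]} \norm{X_l}_2^2$, and the event $\calE_{2b}$ is exactly $\{\max_{l \in [m]} \norm{X_l}_2^2 \leq \gamma_1(T,m,\delta)\}$. So it suffices to prove a high-probability upper bound of the form $\norm{X_l}_2^2 \lesssim (1 + R^2 \log(m/\delta)) \sum_{t=0}^{T-1} \Tr(\Gamma_t(A_l^*))$ for a single fixed $l$, and then take a union bound over $l \in [m]$ (which is why the $\log(m/\delta)$ factor appears inside $\gamma_1$ rather than $\log(1/\delta)$).

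For a fixed $l$, I would first recall that $x_{l,t} = \sum_{k=0}^{t-1} (A_l^*)^k \eta_{l,t-k}$, so $x_{l,t}$ is a centered subgaussian vector whose covariance is $\Gamma_{t-1}(A_l^*)$ (using Assumption \ref{eq:eta_assump}, unit-variance independent-coordinate noise). Hence $\norm{X_l}_F^2 = \sum_{t=1}^T \norm{x_{l,t}}_2^2$ has expectation $\sum_{t=1}^T \Tr(\Gamma_{t-1}(A_l^*)) = \sum_{t=0}^{T-1}\Tr(\Gamma_t(A_l^*))$. The quantity $\norm{X_l}_F^2$ is a quadratic form in the Gaussian-like noise vector $(\eta_{l,1},\dots,\eta_{l,T+1})$, so a Hanson--Wright-type concentration inequality gives $\norm{X_l}_F^2 \lesssim \expec[\norm{X_l}_F^2] + (\text{deviation term involving } R^2)$ with failure probability $\delta/m$; the deviation term contributes the $R^2 \log(m/\delta)$ multiplicative factor (after bounding operator norms of the relevant covariance blocks by the trace). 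Since $\norm{X_l}_2^2 \leq \norm{X_l}_F^2$, this already yields the claimed bound. Alternatively — and this is almost certainly what the paper does — one can invoke the relevant concentration statement directly from \cite[Section 9]{Sarkar19} or the companion work \cite{tyagi2024learning}, where exactly such an upper tail bound on $\norm{X_l X_l^\top}_2$ is established as a byproduct of their analysis of $X_l X_l^\top$.

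The main obstacle is getting the dependence on $R$ and on $\log(m/\delta)$ to come out precisely as in \eqref{eq:gamma_def_laplsmooth_strict_stab} — i.e. as the single prefactor $(1 + R^2\log(m/\delta))$ multiplying $\max_l \sum_t \Tr(\Gamma_t(A_l^*))$, rather than, say, an additive $R^2 d \log(m/\delta)$ term. This requires being slightly careful: the Hanson--Wright deviation bound has both a subgaussian-norm-squared (operator norm) term and a Frobenius-norm term, and one must argue that in the relevant regime the operator-norm terms are dominated by the trace term $\sum_t \Tr(\Gamma_t(A_l^*))$ so everything collapses into a single multiplicative factor. (This is where the restriction $\delta \in (0,e^{-c})$ enters — it ensures $\log(m/\delta)$ is bounded below by a constant, so absorbing lower-order terms is legitimate.) Once the single-$l$ bound is in hand in this clean multiplicative form, the union bound over $[m]$ and the identification $\norm{Q^\top Q}_2 = \max_l \norm{X_l}_2^2$ finish the proof immediately.
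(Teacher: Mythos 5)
Your proposal follows essentially the same route as the paper: reduce $\norm{Q^\top Q}_2$ to $\max_l \norm{X_l}_2^2$ via the block-diagonal structure, bound the operator norm by the trace $\norm{X_l}_F^2 = \tilde{\eta}_l^\top \tilde{A}_l^\top \tilde{A}_l \tilde{\eta}_l$, apply Hanson--Wright with mean $\sum_{t=0}^{T-1}\Tr(\Gamma_t(A_l^*))$, and union bound over $l\in[m]$. The paper resolves the issue you flag about the multiplicative prefactor exactly as you anticipate, by choosing the deviation level proportional to $\norm{\tilde{A}_l^\top \tilde{A}_l}_F^2 / \norm{\tilde{A}_l^\top \tilde{A}_l}_2 \leq \Tr(\tilde{A}_l^\top \tilde{A}_l)$ and using $\delta \leq e^{-c}$ to make the minimum in the Hanson--Wright exponent equal to one.
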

The proof is detailed in Section \ref{appsec:proof_lemma_Qmat_bd}. Now choosing $\Mbar  = \lambda(L \otimes I_{d^2}) + \frac{T}{4} (I_m \otimes I_{d^2})$, we have $Q^\top Q + \Mbar = \Mlamtil$. Furthermore, on the event in Lemma \ref{lem:Qmat_bd}, we obtain the bound 
\begin{align*}
    Q^\top Q + \Mbar  
    &\preceq \lambda(L \otimes I_{d^2}) + \Big(\gamma_1(T,m,\delta) + \frac{T}{4} \Big) (I_m \otimes I_{d^2}) \\
    &\preceq  \Big(\gamma_1(T,m,\delta) + \frac{T}{4} \Big)[(v_m v_m^\top) \otimes I_{d^2}] + \sum_{l=1}^{m-1} \left(\lambda \lambda_l + \gamma_1(T,m,\delta ) + \frac{T}{4} \right) [(v_l v_l^\top) \otimes I_{d^2}]
\end{align*}
which in turn implies 
\begin{align*}
    \det((Q^\top Q + \Mbar )^{1/2}) &\leq \left[\prod_{l=1}^{m-1} \Big(\lambda \lambda_l + \gamma_1(T,m,\delta ) + \frac{T}{4} \Big)^{d^2/2} \right] \Big(\gamma_1(T,m,\delta ) + \frac{T}{4} \Big)^{d^2/2}, \\
    \text{ and } \quad \det(\Mbar^{1/2}) &= (T/4)^{d^2/2} \Big(\prod_{l=1}^{m-1} (\lambda \lambda_l + \frac{T}{4})^{d^2/2} \Big).
\end{align*}
Plugging the above bounds in \eqref{eq:Q_bd_temp_1} then leads to the statement of Proposition \ref{prop:bd_self_norm_laplsmooth} via a union bound, as $\calE_{2a} \cap \calE_{2b} \subseteq \calE_{2}$.

%------------------------------------
% Proof of upper bound on Q^\top Q
%
\subsection{Proof of Lemma \ref{lem:Qmat_bd}} \label{appsec:proof_lemma_Qmat_bd}
Denoting $Y_{l,T} :=  \sum_{t=1}^T x_{l,t} x_{l,t}^\top$, we can write $\norm{Q^\top Q}_2 = \max_l \norm{Y_{l,T}}_2$. Now we bound $\norm{Y_{l,T}}_2$ as 
\begin{equation*}
    \norm{Y_{l,T}}_2 \leq \sum_{t=1}^T \norm{x_{l,t} x_{l,t}^\top}_2 = \sum_{t=1}^T x_{l,t}^\top x_{l,t}. 
\end{equation*}
Since for any $l \in [m]$, 
\begin{equation*}
\begin{bmatrix}
   x_{l,1} \\ 
   x_{l,2} \\ 
  \vdots \\
  x_{l,T}
 \end{bmatrix} =
\underbrace{\begin{bmatrix}
   I_n & 0 & \hdots & 0\\ 
   A_l^* & I_n & \hdots & 0\\ 
  \vdots  &  & \ddots & \vdots\\
  (A_l^*)^{T-1} & \hdots & A_l^* & I_n
 \end{bmatrix}}_{\tilde{A}_l}
\underbrace{\begin{bmatrix}
   \eta_{l,1} \\ 
   \eta_{l,2} \\ 
  \vdots \\
  \eta_{l,T}
 \end{bmatrix}}_{\tilde{\eta}_l},
\end{equation*}
we obtain $\norm{Y_{l,T}}_2 \leq \sum_{l=1}^m \tilde{\eta}_l^\top \tilde{A}_l^\top \tilde{A}_l \tilde{\eta}_l$. Since each entry of $\tilde{\eta}_l$ is $R$-subgaussian, this implies $\norm{\tilde{\eta}_{l,i}}_{\psi_2} \leq C R$ (for each $l,i$) for a universal constant $C>0$. Also, since $\expec[\tilde{\eta}_l^\top \tilde{A}_l^\top \tilde{A}_l \tilde{\eta}_l] = \Tr(\tilde{A}_l^\top \tilde{A}_l)$, we obtain via Hanson-Wright inequality \cite{Hanson71, rudelson2013} that 
\begin{equation} \label{eq:Q_HW_temp_1}
    \prob\Big(\abs{\tilde{\eta}_l^\top \tilde{A}_l^\top \tilde{A}_l \tilde{\eta}_l - \Tr(\tilde{A}_l^\top \tilde{A}_l)} \geq t \Big) \leq 2\exp\Bigg(-c \min\set{\frac{t}{C^2 R^2 \norm{\tilde{A}_l^\top \tilde{A}_l}_2}, \frac{t^2}{C^4 R^4 \norm{\tilde{A}_l^\top \tilde{A}_l}_F^2}} \Bigg)
\end{equation}
for some constant $c > 0$. Choosing $t = \frac{\norm{\tilde{A}_l^\top \tilde{A}_l}_F^2}{\norm{\tilde{A}_l^\top \tilde{A}_l}_2} (\frac{C^2 R^2 \log(1/\delta)}{c})$, the RHS of \eqref{eq:Q_HW_temp_1} is bounded by
\begin{align*}
2\exp\Bigg(-\log(1/\delta) \underbrace{\frac{\norm{\tilde{A}_l^\top \tilde{A}_l}_F^2}{\norm{\tilde{A}_l^\top \tilde{A}_l}_2^2}}_{\geq 1} \min \set{1, \frac{\log(1/\delta)}{c}} \Bigg)    
\end{align*}
which in turn is bounded by $2\delta$ if $\delta \leq e^{-c}$. Since $\Tr(\tilde{A}_l^\top \tilde{A}_l) = \sum_{t=0}^{T-1} \Tr(\Gamma_t(A_l^*))$ and $\norm{\tilde{A}_l^\top \tilde{A}_l}_F^2 \leq \norm{\tilde{A}_l^\top \tilde{A}_l}_2 \Tr(\tilde{A}_l^\top \tilde{A}_l)$, the statement of the lemma now follows via a union bound over $[m]$, and replacing $\delta$ by $\delta/m$.

% Subspace constraint
%-------------------------
% Appendix
%
\section{Proofs from Section \ref{sec:subspace_LS_analysis}} \label{appsec:sub_constr_proofs}
%----------------------------------------------------------------
% Proof of Lemma for smalles eig val of controllability  matrix
%
\subsection{Proof of Lemma \ref{lem:eigval_control_mat_subspace_method}}
Recall $Y_{l,T} := \sum_{t=1}^{T} x_{l,t} x_{l,t}^\top$. We begin by expanding $Q^\top Q$ as $Q^\top Q = P  + (Z + Z^\top) + N$,  where
\begin{align*}
    P & = \diag(P_l \otimes I_d), \quad P_l = A_l Y_{l,T-1} A_l^\top; \\
    Z & = \diag(Z_l \otimes I_d), \quad Z_l = \sum_{t=1}^T A_l x_{l,t-1} \eta_{l,t}^\top;  \\
  \text{ and }  N &  = \diag(N_l \otimes I_d), \quad N_l = \sum_{t=1}^T \eta_{l,t} \eta_{l,t}^\top.
\end{align*}
Let us further define the quantities 
\begin{align} \label{eq:f_l_def}
    f_l := \begin{bmatrix}
   v_{m,l} \\ 
  \vdots \\
  v_{m-\tau+1,l}
 \end{bmatrix} \quad \text{ and } \quad F_l = f_l f_l^\top.
\end{align}
Note that since $G$ is $\theta$-delocalized, we have $\norm{f_l}_2^2 \leq \theta\tau/m$ for each $l$.  
Then from the definition of $\Vtau$ in \eqref{eq:Vtau_def}, we can write $\Vtau^\top Q^\top Q \Vtau$ as
\begin{align*}
    \Vtau^\top Q^\top Q \Vtau 
    &= \Vtau^\top P \Vtau + \Vtau^\top (Z + Z^\top) \Vtau + \Vtau^\top N \Vtau
 \\    
    &=\sum_{l=1}^m (F_l \otimes P_l) \otimes I_d + \sum_{l=1}^m (F_l \otimes Z_l) \otimes I_d +  \sum_{l=1}^m (F_l \otimes Z_l^\top) \otimes I_d + \sum_{l=1}^m  (F_l \otimes N_l) \otimes I_d
\end{align*}
Now we will proceed to derive a lower bound on $u^\top (\Vtau^\top Q^\top Q \Vtau) u$ holding for all $u \in \mathbb{S}^{\tau d^2 - 1}$. 

To this end, we first invoke \cite[Proposition 7.1]{Sarkar19}, which in our notation states the following. 
\begin{proposition}[{\cite[Proposition 7.1]{Sarkar19}}] \label{prop:useful_ineq_prop}
For $M \succ 0$, suppose $$\norm{(\Vtau^\top P \Vtau + M)^{-1/2} (\Vtau^\top Z \Vtau)}_2 \leq \zeta.$$ Then for any $u \in \mathbb{S}^{\tau d^2 - 1}$ such that
\begin{align*}
    u^\top (\Vtau^\top P \Vtau) u = \alpha \quad \text{ and } \quad u^\top M u = \alpha',
\end{align*}
we have $\norm{\Vtau^\top Z^\top \Vtau u}_2 \leq \zeta(\alpha + \alpha')^{1/2}$.
\end{proposition}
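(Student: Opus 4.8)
The plan is to reduce the statement to an elementary manipulation with spectral norms, exploiting that $\Vtau^\top P \Vtau + M \succ 0$ (it is the sum of the positive semidefinite matrix $\Vtau^\top P \Vtau$ and the positive definite $M$), so that $(\Vtau^\top P \Vtau + M)^{1/2}$ and $(\Vtau^\top P \Vtau + M)^{-1/2}$ are well-defined symmetric positive definite matrices. Write $P' := \Vtau^\top P \Vtau$ and $Z' := \Vtau^\top Z \Vtau$ for brevity, and note $Z'^\top = \Vtau^\top Z^\top \Vtau$.

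First I would rewrite the hypothesis. Since the spectral norm is invariant under transposition and the inverse square root of a symmetric matrix is symmetric, $\norm{(P' + M)^{-1/2} Z'}_2 \leq \zeta$ is equivalent to $\norm{Z'^\top (P' + M)^{-1/2}}_2 \leq \zeta$. In particular, for every vector $w$ of compatible dimension, $\norm{Z'^\top (P' + M)^{-1/2} w}_2 \leq \zeta \norm{w}_2$.

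Next, given $u \in \mathbb{S}^{\tau d^2 - 1}$ with $u^\top P' u = \alpha$ and $u^\top M u = \alpha'$, I would apply the displayed bound to the specific choice $w = (P' + M)^{1/2} u$. On the one hand, $Z'^\top (P' + M)^{-1/2} w = Z'^\top u = \Vtau^\top Z^\top \Vtau u$. On the other hand, $\norm{w}_2^2 = u^\top (P' + M) u = u^\top P' u + u^\top M u = \alpha + \alpha'$. Combining the two gives $\norm{\Vtau^\top Z^\top \Vtau u}_2 \leq \zeta (\alpha + \alpha')^{1/2}$, which is exactly the claim.

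There is essentially no obstacle in this step; the only point needing a moment's care is confirming that $\Vtau^\top P \Vtau + M$ is positive definite so that the square roots and the substitution $w = (P' + M)^{1/2} u$ are legitimate, and this is immediate from $M \succ 0$. The genuinely hard analytic content in the surrounding argument lies elsewhere — namely in \emph{verifying} the hypothesis $\norm{(\Vtau^\top P \Vtau + M)^{-1/2} \Vtau^\top Z \Vtau}_2 \leq \zeta$ for a suitable $\zeta$ and $M$, which requires concentration bounds on the cross term $Z$ and is handled separately (e.g., in Proposition~\ref{prop:conc_excit_subsp_constr} / Proposition~\ref{prop:spec_norm_bd_subsp_constr}).
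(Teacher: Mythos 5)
Your proof is correct: since $\Vtau^\top P \Vtau\succeq 0$ and $M\succ 0$, the substitution $w=(\Vtau^\top P\Vtau+M)^{1/2}u$ together with $\norm{w}_2^2=\alpha+\alpha'$ gives exactly the claimed bound. The paper itself states this as \cite[Proposition 7.1]{Sarkar19} without reproving it, and your argument is the same standard one-line manipulation underlying that cited result, so there is nothing to add.
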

Choosing $M = \Mtil \otimes I_d $ for $\Mtil \succ 0$, it holds
\begin{align}
    \norm{(\Vtau^\top P \Vtau + M)^{-1/2} (\Vtau^\top Z \Vtau)}_2 &= \Big\|(\sum_{l=1}^m (F_l \otimes P_l) + \Mtil)^{-1/2} (\sum_{l=1}^m F_l \otimes Z_l) \Big\|_2 \nonumber \\
    &= \sup_{w \in \mathbb{S}^{\tau d - 1}} \Big\|(\sum_{l=1}^m (F_l \otimes P_l) + \Mtil)^{-1/2} (\sum_{l=1}^m F_l \otimes Z_l) w \Big\|_2  \label{eq:temp_bd_1_subsp_proof}
\end{align}
and so we focus now on bounding the RHS of \eqref{eq:temp_bd_1_subsp_proof}. 
\begin{proposition} \label{prop:spec_norm_bd_subsp_constr}
  Let $\Mtil = T I_{\tau d}$. Then there exists a constant $C > 0$ such that for any $\delta \in (0,1)$, it holds for the event
    \begin{equation*}
        \calE_{3a} := \set{\Big\|(\sum_{l=1}^m (F_l \otimes P_l) + \Mtil)^{-1/2} (\sum_{l=1}^m F_l \otimes Z_l) \Big\|_2 \leq C R\tau \sqrt{\frac{\theta d}{m}} \Bigg(\log\Big(\frac{1}{\delta} \Big) + \log\Big(\frac{\theta \tau \gamma_2(m,T)}{mT} + 1 \Big) \Bigg)^{1/2}}
    \end{equation*}
    that $\prob(\calE_{3a}) \geq 1-2\delta$.
\end{proposition}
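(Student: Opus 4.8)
The plan is to reduce the operator-norm bound to a union of \emph{scalar} self-normalized martingale tail bounds over an $\epsilon$-net of $\mathbb{S}^{\tau d-1}$, and then to control the resulting log-determinant factor via a Hanson--Wright estimate on $\Tr\!\big(\sum_l F_l\otimes P_l\big)$. The starting point is algebraic: setting $\phi_{l,t} := f_l\otimes (A_l^* x_{l,t-1})\in\matR^{\tau d}$, the mixed-product property of $\otimes$ together with $x_{l,0}=0$ gives $\sum_{l=1}^m\sum_{t=1}^T\phi_{l,t}\phi_{l,t}^\top = \sum_{l=1}^m(F_l\otimes P_l)=:V$ and $\sum_{l=1}^m(F_l\otimes Z_l) = \sum_{l=1}^m\sum_{t=1}^T\phi_{l,t}\,(f_l\otimes\eta_{l,t})^\top$. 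Hence for any fixed $w\in\mathbb{S}^{\tau d-1}$ we have $\big(\sum_l F_l\otimes Z_l\big)w = \sum_{l,t}\phi_{l,t}\,\xi_{l,t}(w)$, where $\xi_{l,t}(w):=\dotprod{\eta_{l,t}}{(f_l^\top\otimes I_d)w}$ is a scalar.

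First I would set up the martingale structure: order the index pairs $(l,t)$ lexicographically and let $\calG_{(l,t)}$ be the $\sigma$-algebra they generate. Since the trajectories are independent across $l$, $\phi_{l,t}$ (a function of $\eta_{l,1},\dots,\eta_{l,t-1}$) is predictable, while $\eta_{l,t}$ is fresh; thus $\{\xi_{l,t}(w)\}$ is a martingale difference sequence, conditionally centered and $R\norm{(f_l^\top\otimes I_d)w}_2$-subgaussian by Assumption~\ref{eq:eta_assump}. Because $\norm{f_l^\top\otimes I_d}_2=\norm{f_l}_2$ and $\norm{f_l}_2^2=\sum_{i=1}^\tau v_{m-i+1,l}^2\le \theta\tau/m$ by $\theta$-delocalization (Definition~\ref{def:deloc_eigvecs}), every increment is $\sigma$-subgaussian with $\sigma:=R\sqrt{\theta\tau/m}$, \emph{uniformly} in $l$ and in $w$.

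Next, for each fixed $w$ I would invoke the scalar self-normalized martingale inequality of \cite{abbasi11} with deterministic regularizer $\Mtil=TI_{\tau d}$ and (random) cumulative Gram matrix $V=\sum_{l,t}\phi_{l,t}\phi_{l,t}^\top=\sum_l F_l\otimes P_l$: with probability at least $1-\delta'$, $\norm{(\Mtil+V)^{-1/2}\big(\sum_l F_l\otimes Z_l\big)w}_2^2 \le 2\sigma^2\big(\tfrac12\log\tfrac{\det(\Mtil+V)}{\det\Mtil}+\log\tfrac1{\delta'}\big)$. Taking a $1/2$-net $\calN$ of $\mathbb{S}^{\tau d-1}$ with $\abs{\calN}\le 5^{\tau d}$, choosing $\delta'=5^{-\tau d}\delta$, union-bounding over $\calN$, and using $\norm{(\Mtil+V)^{-1/2}(\cdot)}_2\le 2\max_{w\in\calN}\norm{(\Mtil+V)^{-1/2}(\cdot)w}_2$ yields, on an event of probability $\ge 1-\delta$, that $\norm{(\Mtil+V)^{-1/2}\big(\sum_l F_l\otimes Z_l\big)}_2^2 \lesssim \sigma^2\big(\tau d+\log\tfrac1\delta+\log\tfrac{\det(\Mtil+V)}{\det\Mtil}\big)$. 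Since $\Mtil=TI_{\tau d}$ and $V\succeq0$, concavity of $\log$ gives $\log\tfrac{\det(\Mtil+V)}{\det\Mtil}=\sum_i\log(1+\lambda_i(V)/T)\le \tau d\log\big(1+\tfrac{\Tr V}{\tau d T}\big)$. Finally, $\Tr V=\sum_l\norm{f_l}_2^2\,\Tr P_l\le\tfrac{\theta\tau}{m}\sum_l\Tr P_l$, and $\sum_l\Tr P_l=\sum_l\sum_{t=1}^{T-1}\norm{A_l^* x_{l,t}}_2^2$ is a quadratic form in the independent noise coordinates with mean $\sum_l\sum_{t=1}^{T-1}\Tr\!\big(A_l^*\Gamma_{t-1}(A_l^*)(A_l^*)^\top\big)=\sum_l\sum_{t=1}^{T-1}\Tr(\Gamma_t(A_l^*)-I)\le\gamma_2(m,T)$; a Hanson--Wright bound exactly as in the proof of Lemma~\ref{lem:Qmat_bd} then gives $\sum_l\Tr P_l\lesssim\gamma_2(m,T)$ on a further $(1-\delta)$-event (for $\delta\le e^{-c}$), so that $\tfrac{\Tr V}{\tau d T}\lesssim\tfrac{\theta\gamma_2(m,T)}{mdT}\le\tfrac{\theta\tau\gamma_2(m,T)}{mT}$. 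Substituting $\sigma^2=R^2\theta\tau/m$ and collecting the three terms gives the claimed bound (up to absolute constants), with total failure probability $2\delta$.

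The main obstacle is the $\epsilon$-net step: it must cost only a $\tau d$ term \emph{inside} the logarithm, which is precisely why one works with the scalar-noise self-normalized inequality in each fixed direction $w$ rather than with a vector-valued version applied to the degenerate noise $f_l\otimes\eta_{l,t}$; and this only succeeds because delocalization makes the per-step subgaussian constant $R\norm{(f_l^\top\otimes I_d)w}_2$ uniformly bounded by $\sigma=R\sqrt{\theta\tau/m}$ — a naive bound $\norm{f_l}_2\le 1$ would forfeit the crucial $\sqrt{\tau/m}$ gain. A secondary subtlety is getting the dependence on $\gamma_2(m,T)$ rather than on $\gamma_3(m,T)$, which forces the direct computation of $\expec\sum_l\Tr P_l$ above (connecting to $\Gamma_t(A_l^*)-I$) instead of a cruder trace bound.
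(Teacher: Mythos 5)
Your proposal is correct and follows essentially the same route as the paper's proof: reduce to a fixed direction $w$ where the noise becomes the scalar $\dotprod{Wf_l}{\eta_{l,t}}$, apply the self-normalized martingale bound of \cite{abbasi11} with the subgaussian parameter $R\sqrt{\theta\tau/m}$ coming from $\theta$-delocalization, absorb a $5^{\tau d}$ covering of $\mathbb{S}^{\tau d-1}$ inside the logarithm, and control the resulting log-determinant via a high-probability upper bound on $\sum_l F_l\otimes P_l$ in terms of $\gamma_2(m,T)$. The only (immaterial) divergence is in that last step, where the paper applies Markov's inequality to $\expec\norm{\sum_l F_l\otimes P_l}_2$ — valid for all $\delta\in(0,1)$ and placing a $1/\delta$ inside the logarithm — whereas you use Hanson--Wright on $\Tr\big(\sum_l F_l\otimes P_l\big)$, which avoids the $1/\delta$ but implicitly restricts to $\delta\le e^{-c}$; both yield the stated bound after adjusting constants.
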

The arguments in the proof are analogous to that in the proof of Proposition \ref{prop:bd_self_norm_laplsmooth}. 

The next step is to obtain a lower (L\"owner) bound on $\sum_{l=1}^m  (F_l \otimes N_l) \otimes I_d$, this is outlined in the proposition below. The proof is along the lines of \cite[Theorem 4.6.1]{HDPbook}, but does not, unfortunately, follow directly from this theorem since the random vector $f_l \otimes \eta_{l,t}$ is not isotropic.
\begin{proposition} \label{prop:conc_excit_subsp_constr}
   Denote the event $$\calE_{3b} := \set{\sum_{l=1}^m  (F_l \otimes N_l) \otimes I_d \succeq \frac{3T}{4} I_{\tau d^2}}.$$ There exist constants $c_1, c_2 > 0$ such that if
    \begin{equation*}
        c_1 R^2\max \set{\sqrt{\frac{\theta^3 \tau^3 d T}{m}}, \frac{\theta^2 \tau^2 d}{m}} \leq \frac{T}{4},
    \end{equation*}
    then $\prob(\calE_{3b}) \geq 1-2\exp(-c_2 \theta\tau d)$.
\end{proposition}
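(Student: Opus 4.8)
Since $(M\otimes I_d)\succeq cI$ if and only if $M\succeq cI$, and since $F_l\otimes N_l=\sum_{t=1}^T (f_l\otimes\eta_{l,t})(f_l\otimes\eta_{l,t})^\top$, it suffices to prove that $W:=\sum_{l=1}^m\sum_{t=1}^T g_{l,t}g_{l,t}^\top\succeq\frac{3T}{4}I_{\tau d}$ with the claimed probability, where $g_{l,t}:=f_l\otimes\eta_{l,t}\in\matR^{\tau d}$. Two structural facts drive the argument: (i) orthonormality of the eigenvectors $v_m,\dots,v_{m-\tau+1}$ gives $\sum_{l=1}^m f_lf_l^\top=I_\tau$, hence $\expec[W]=\sum_{l,t}(f_lf_l^\top)\otimes I_d=T\,I_{\tau d}$; and (ii) $\theta$-delocalization gives $\norm{f_l}_2^2=\sum_{i=1}^\tau v_{m-i+1,l}^2\le\theta\tau/m$ for every $l$. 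Thus it is enough to establish $\norm{W-TI_{\tau d}}_2\le T/4$ on an event of probability at least $1-2\exp(-c_2\theta\tau d)$, since this immediately yields $W\succeq\frac34 T I_{\tau d}$.

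For the operator-norm deviation I would run a covering argument, but since each $g_{l,t}$ is \emph{not} isotropic --- its covariance $F_l\otimes I_d$ has rank at most $d$, isotropy being recovered only after summing over $l$ via fact (i) --- this cannot be quoted from \cite[Theorem 4.6.1]{HDPbook} and must be carried out by hand. Fix a $\tfrac14$-net $\calN$ of $\mathbb{S}^{\tau d-1}$ with $\abs{\calN}\le 9^{\tau d}$; by the standard net bound it suffices to show $\big|\sum_{l,t}\dotprod{g_{l,t}}{u}^2-T\big|\le T/8$ simultaneously for all $u\in\calN$. For fixed $u$, write $u$ in blocks $(u^{(1)},\dots,u^{(\tau)})$ with $u^{(i)}\in\matR^d$ and set $w_l:=\sum_{i=1}^\tau v_{m-i+1,l}\,u^{(i)}\in\matR^d$, so that $\dotprod{g_{l,t}}{u}=\dotprod{\eta_{l,t}}{w_l}$; fact (i) gives $\sum_l\norm{w_l}_2^2=\norm{u}_2^2=1$, and Cauchy--Schwarz with fact (ii) gives $\norm{w_l}_2^2\le\norm{f_l}_2^2\le\theta\tau/m$ for each $l$. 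Since $\eta_{l,t}$ has independent mean-zero $R$-subgaussian coordinates, $\dotprod{\eta_{l,t}}{w_l}$ is $O(R\norm{w_l}_2)$-subgaussian, so $X_{l,t}:=\dotprod{\eta_{l,t}}{w_l}^2-\norm{w_l}_2^2$ is a centered sub-exponential variable with $\norm{X_{l,t}}_{\psi_1}\lesssim R^2\norm{w_l}_2^2\le R^2\theta\tau/m$, and $\expec\big[\sum_{l,t}X_{l,t}\big]=0$. Applying Bernstein's inequality for independent sub-exponential summands to $\sum_{l,t}X_{l,t}$, with variance parameter $\sum_{l,t}\norm{X_{l,t}}_{\psi_1}^2\lesssim R^4 T\,\theta^2\tau^2/m$ (using $\norm{w_l}_2^2\le\theta\tau/m$ and there being at most $m$ nodes) and tail parameter $\lesssim R^2\theta\tau/m$, one gets that the deviation $T/8$ fails with probability at most $2\exp\!\big(-c\min\{\,mT/(R^4\theta^2\tau^2)\,,\ mT/(R^2\theta\tau)\,\}\big)$. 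A union bound over $\calN$ costs $(\log 9)\tau d$ in the exponent, so demanding the exponent to exceed $(\log 9)\tau d+c_2\theta\tau d$ gives exactly the two requirements $T\gtrsim R^4\theta^3\tau^3 d/m$ and $T\gtrsim R^2\theta^2\tau^2 d/m$ --- i.e.\ the hypothesis $c_1 R^2\max\{\sqrt{\theta^3\tau^3 dT/m},\,\theta^2\tau^2 d/m\}\le T/4$ --- and yields failure probability at most $2\exp(-c_2\theta\tau d)$, on whose complement $W\succeq\frac34 T I_{\tau d}$.

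The main obstacle is precisely the non-isotropy: because $\expec[g_{l,t}g_{l,t}^\top]=F_l\otimes I_d$ is rank-deficient, no single-sample concentration applies, and one has to track the node-dependent vectors $w_l$ and use $\sum_l\norm{w_l}_2^2=1$ (to identify the mean, i.e.\ the target $TI_{\tau d}$) and $\max_l\norm{w_l}_2^2\le\theta\tau/m$ (to control the Bernstein variance and tail parameters) in the right places, rather than invoking a black-box covariance-estimation bound. A secondary point is that the summands $\dotprod{g_{l,t}}{u}^2$ are only sub-exponential (unbounded), so the two-regime sub-exponential Bernstein inequality is required, and its two regimes are exactly what produce the two distinct conditions on $T$ in \eqref{eq:Tcond_subs_constr_theorem}; a sharper variance bound ($\sum_l\norm{w_l}_2^4\le\theta\tau/m$ instead of $\theta^2\tau^2/m$) would improve the first to $T\gtrsim R^4\theta^2\tau^2 d/m$, but this refinement is not needed for the stated result.
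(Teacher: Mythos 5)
Your proposal is correct and follows essentially the same route as the paper's proof: reduce to $\sum_{l,t}(f_l\otimes\eta_{l,t})(f_l\otimes\eta_{l,t})^\top$, use $\sum_l f_lf_l^\top=I_\tau$ to identify the mean $TI_{\tau d}$, control each quadratic form via the delocalization bound $\norm{Uf_l}_2^2\le\theta\tau/m$, apply two-regime sub-exponential Bernstein, and union bound over a $\tfrac14$-net of size $9^{\tau d}$. Your closing remark about the sharper variance bound $\sum_l\norm{w_l}_2^4\le\theta\tau/m$ is a correct observation of slack that the paper also leaves on the table.
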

The proof is now completed using Propositions \ref{prop:useful_ineq_prop},\ref{prop:spec_norm_bd_subsp_constr} and \ref{prop:conc_excit_subsp_constr} as follows. For any $u \in \mathbb{S}^{\tau d^2 - 1}$, denote $\kappa^2 = u^\top (\Vtau^\top P \Vtau) u$, and choose $\Mtil = T I_{\tau d}$ so that $u^\top M u = u^\top (\Mtil \otimes I_d) u = T$. Condition on the event $\calE_{3a} \cap \calE_{3b}$. Then Proposition \ref{prop:useful_ineq_prop} implies
\begin{align*}
    \norm{\Vtau^\top Z^\top \Vtau u}_2 \leq (\kappa^2 + T)^{1/2} C R\tau \sqrt{\frac{\theta d}{m}} \Bigg(\log\Big(\frac{1}{\delta} \Big) + \log\Big(\frac{\theta \tau \gamma_2(m,T)}{mT} + 1 \Big) \Bigg)^{1/2}, \quad \forall u \in \mathbb{S}^{\tau d^2 - 1},
\end{align*}
with the same bound holding for $\norm{\Vtau^\top Z \Vtau u}_2$ (uniformly over $u \in \mathbb{S}^{\tau d^2 - 1}$). Further invoking Proposition \ref{prop:conc_excit_subsp_constr}, this implies for all $u \in \mathbb{S}^{\tau d^2 - 1}$ that
\begin{align*}
    u^\top (\Vtau Q^\top Q \Vtau) u 
    &\geq \kappa^2 - 2C R\tau(\kappa^2 + T)^{1/2}  \sqrt{\frac{\theta d}{m}} \Bigg(\log\Big(\frac{1}{\delta} \Big) + \log\Big(\frac{\theta \tau \gamma_2(m,T)}{mT} + 1 \Big) \Bigg)^{1/2} + \frac{3T}{4} \\
    &\geq \kappa^2 - (\kappa^2 + T)^{1/2}\frac{\sqrt{T}}{2} + \frac{3T}{4} \tag{if $T$ satisfies \eqref{eq:Tcond_subs_constr_theorem} for suitable $C > 0$} \\
    &\geq \kappa^2 - (1+\frac{\kappa^2}{2T})\frac{T}{2} + \frac{3T}{4} \\
    &\geq \frac{3 \kappa^2}{4} + \frac{T}{4} \geq \frac{T}{4},
\end{align*}
which completes the proof.

%-------------------------------------------------------
% Proof of proposition
%
\subsection{Proof of Proposition \ref{prop:spec_norm_bd_subsp_constr}}
Denote for convenience $G_{l,t} := A_l x_{l,t-1}$ and consider $W = [w_1 \ w_2 \ \cdots w_{\tau}] \in \matR^{d \times \tau}$ with $\norm{W}_F = 1$. Let $\Mtil \succ 0$ be arbitrary for now. Then for $w = \vect(W) \in \mathbb{S}^{\tau d - 1}$, one can verify with a little effort that the RHS of \eqref{eq:temp_bd_1_subsp_proof} (for a given $w$) equals
\begin{align} \label{eq:temp_bd_2_subsp_proof}
    \Big\|(\sum_{l=1}^m (F_l \otimes P_l) &+ \Mtil)^{-1/2} (\sum_{l=1}^m F_l \otimes Z_l) w \Big\|_2 \nonumber \\
    &= \left\|\Big(\sum_{t=1}^T\sum_{l=1}^m (f_l \otimes G_{l,t})(f_l \otimes G_{l,t})^\top + \Mtil \Big)^{-1/2} \Big(\sum_{t=1}^T\sum_{l=1}^m \dotprod{W f_l}{\eta_{l,t}} (f_l \otimes G_{l,t}) \Big)\right\|_2.
\end{align}
The RHS of \eqref{eq:temp_bd_2_subsp_proof} can be bounded in an analogous manner to Proposition \ref{prop:bd_self_norm_laplsmooth}, which along with a standard covering argument gives a bound on the RHS of \eqref{eq:temp_bd_1_subsp_proof}.

To this end, with $(\calF_t)_{t \geq 1}$ a filtration as before, we have the following analogous result to \cite[Lemma 9]{abbasi11} for our setting.
\begin{lemma}
 For any $\mu \in \matR^{\tau d}$, consider for  $t \geq 1$,
    \begin{align*}
        M_{t}^\mu := \exp\Bigg( \sum_{s=1}^t \sum_{l=1}^m \left[\frac{\dotprod{\mu}{f_l \otimes G_{l,s}} \dotprod{W f_l}{\eta_{l,s}}}{R \sqrt{\theta \tau/m}} - \frac{\dotprod{\mu}{f_l \otimes G_{l,s}}^2}{2} \right] \Bigg).
    \end{align*}
    Let $\stoptime$ be a stopping time w.r.t $(\calF_{t})_{t=1}^\infty$. Then $M_{\stoptime}^\mu$ is well defined and $\expec[M_{\stoptime}^\mu] \leq 1$.   
\end{lemma}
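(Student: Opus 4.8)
The plan is to show that $(M_t^\mu)_{t\ge 1}$ is a nonnegative supermartingale with respect to the filtration $(\calF_t)_{t\ge 1}$ defined earlier (namely $\calF_t=\sigma(\eta_{l,1},\dots,\eta_{l,t})_{l=1}^m$), with $\expec[M_t^\mu]\le 1$ for every $t$, and then to transfer this to the stopped process $M_{\stoptime}^\mu$ exactly as in \cite[Lemma 9]{abbasi11}. First I would factor the increment: write $M_t^\mu=\prod_{s=1}^t D_s^\mu$ and $D_s^\mu=\prod_{l=1}^m D_{s,l}^\mu$ with
\begin{equation*}
D_{s,l}^\mu=\exp\!\left(\frac{\dotprod{\mu}{f_l\otimes G_{l,s}}\,\dotprod{W f_l}{\eta_{l,s}}}{R\sqrt{\theta\tau/m}}-\frac{\dotprod{\mu}{f_l\otimes G_{l,s}}^2}{2}\right).
\end{equation*}
The crucial structural fact is that $G_{l,s}=A_l x_{l,s-1}$ is a deterministic function of $\eta_{l,1},\dots,\eta_{l,s-1}$ (since $x_{l,0}=0$), hence $\calF_{s-1}$-measurable, while $\eta_{l,s}$ is independent of $\calF_{s-1}$. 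Thus, conditionally on $\calF_{s-1}$, the only randomness in $D_{s,l}^\mu$ is the scalar $\dotprod{W f_l}{\eta_{l,s}}$, carried by the deterministic coefficient $a_l:=\dotprod{\mu}{f_l\otimes G_{l,s}}$.

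Next I would bound the conditional moment generating function. Under Assumption~\ref{eq:eta_assump} with $R$-subgaussian entries, the coordinates of $\eta_{l,s}$ are independent and $R$-subgaussian, so $\dotprod{W f_l}{\eta_{l,s}}$ is $R\norm{W f_l}_2$-subgaussian given $\calF_{s-1}$, i.e. $\expec\big[\exp(\beta\dotprod{W f_l}{\eta_{l,s}})\mid\calF_{s-1}\big]\le\exp(\beta^2 R^2\norm{W f_l}_2^2/2)$ for all $\beta\in\matR$. Taking $\beta=a_l/(R\sqrt{\theta\tau/m})$ and using $\norm{W f_l}_2\le\norm{W}_F\norm{f_l}_2=\norm{f_l}_2$ (recall $\norm{W}_F=1$) together with the $\theta$-delocalization bound $\norm{f_l}_2^2\le\theta\tau/m$, one gets $\norm{W f_l}_2^2/(\theta\tau/m)\le 1$, whence $\expec[D_{s,l}^\mu\mid\calF_{s-1}]\le\exp\big(\tfrac{a_l^2}{2}\cdot\tfrac{\norm{W f_l}_2^2}{\theta\tau/m}-\tfrac{a_l^2}{2}\big)\le 1$ almost surely. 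Since, conditionally on $\calF_{s-1}$, the vectors $(\eta_{l,s})_{l=1}^m$ are mutually independent and $D_{s,l}^\mu$ depends only on $\eta_{l,s}$, multiplying over $l$ gives $\expec[D_s^\mu\mid\calF_{s-1}]\le 1$; and as $M_{s-1}^\mu$ is $\calF_{s-1}$-measurable, $\expec[M_s^\mu\mid\calF_{s-1}]\le M_{s-1}^\mu$. This yields the supermartingale property and $\expec[M_t^\mu]\le 1$ (with $M_0^\mu:=1$, an empty product).

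For the statement about the stopping time $\stoptime$, I would invoke the argument of \cite[Lemma 9]{abbasi11} without change: the stopped process $(M_{\stoptime\wedge t}^\mu)_t$ is a nonnegative supermartingale, hence converges almost surely, which makes $M_{\stoptime}^\mu$ well defined (with the usual convention on the event $\{\stoptime=\infty\}$), and Fatou's lemma gives $\expec[M_{\stoptime}^\mu]\le\liminf_t\expec[M_{\stoptime\wedge t}^\mu]\le 1$. I expect no real obstacle here: the one point that requires care — and the only genuine difference from the analogous lemma in the proof of Proposition~\ref{prop:bd_self_norm_laplsmooth} — is that the normalization $R\sqrt{\theta\tau/m}$ in the exponent is precisely calibrated so that the subgaussian parameter $R\norm{W f_l}_2$ is absorbed (via $\norm{W}_F=1$ and delocalization) exactly into the quadratic correction term $\dotprod{\mu}{f_l\otimes G_{l,s}}^2/2$. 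Getting that normalization bookkeeping right is the crux; everything else is a routine transcription of the self-normalized-martingale machinery.
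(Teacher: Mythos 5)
Your proposal is correct and follows essentially the same route as the paper: decompose $M_t^\mu$ into per-step, per-node factors, use $\calF_{s-1}$-measurability of $G_{l,s}$ together with the $R\norm{Wf_l}_2$-subgaussianity of $\dotprod{Wf_l}{\eta_{l,s}}$ and the delocalization bound $\norm{f_l}_2^2\le\theta\tau/m$ to show each conditional increment has expectation at most $1$, then multiply over $l$ by independence and pass to the stopping time via the argument of Abbasi-Yadkori et al. Your write-up is in fact slightly more careful than the paper's (which omits the $R^2$ factor in its intermediate MGF bound and delegates the stopped-process step entirely to the cited lemma), but there is no substantive difference in approach.
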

\begin{proof}
We will show that $(M_{t}^\mu)_{t=1}^{\infty}$ is a supermartingale. To this end, denote 
 \begin{equation*}
        D_{t}^\mu := \exp\Bigg(\sum_{l=1}^m \left[\frac{\dotprod{\mu}{f_l \otimes G_{l,t}} \dotprod{W f_l}{\eta_{l,t}}}{R\sqrt{\theta \tau/m}} - \frac{\dotprod{\mu}{f_l \otimes G_{l,t}}^2}{2} \right] \Bigg). 
 \end{equation*}
  Observe that $D_{t}^\mu, M_{t}^\mu$ are both $\calF_{t}$-measurable. Hence, we obtain
   \begin{align*}
        \expec[D_{t}^\mu \vert \calF_{t-1}] 
        = \prod_{l=1}^m \expec\left[\exp\Bigg(\frac{\dotprod{\mu}{f_l \otimes G_{l,t}} \dotprod{W f_l}{\eta_{l,t}}}{R\sqrt{\theta\tau/m}} - \frac{\dotprod{\mu}{f_l \otimes G_{l,t}}^2}{2} \Bigg) \bigg\vert \calF_{t-1}\right]
        \leq 1 \ \text{a.s.}
  \end{align*}
  using independence over $l$, and the fact
  \begin{align*}
  &\expec\Big[\exp \Big(\dotprod{\mu}{f_l \otimes G_{l,t}} \dotprod{W f_l}{\eta_{l,t}} \Big) \big\vert \calF_{t-1} \Big]   \\
  &\leq  \exp\Bigg(\frac{\dotprod{\mu}{f_l \otimes G_{l,s}}^2\norm{W f_l}_2^2}{2} \Bigg) 
  \leq \exp\Bigg(\frac{\dotprod{\mu}{f_l \otimes G_{l,s}}^2(R^2\theta \tau / m)}{2} \Bigg).  
  \end{align*}  
  The last inequality follows from $\norm{f_l}_2^2 \leq \theta\tau/m$. This in turn implies $\expec[M_{t}^\mu \vert \calF_{t-1}] \leq M_{t-1}^{\mu}$
  meaning that $(M_{t}^\mu)_{t=1}^{\infty}$ is a supermartingale, and also $\expec[M_{t}^\mu] \leq 1$. The arguments for showing that $M_{\stoptime}^\mu$ is well-defined, and $\expec[M_{\stoptime}^\mu] \leq 1$ is identical to \cite[Lemma 9]{abbasi11}, and hence omitted.
\end{proof}
Let us now denote for convenience
\begin{align*}
    U_T &= \sum_{l=1}^m (F_l \otimes P_l) = \sum_{t=1}^T \sum_{l=1}^m (f_l \otimes G_{l,t})(f_l \otimes G_{l,t})^\top, \\ 
    \text{ and } \ \Ubar_T &= U_T + \Mtil.
\end{align*}
Then, we obtain in a completely analogous manner as in the proof of \cite[Theorem 1]{abbasi11}, that for any $\delta\in (0,1)$, 
\begin{equation} \label{eq:subsp_temp_4}
\prob\Bigg( \Big\|\sum_{t=1}^T\sum_{l=1}^m \dotprod{W f_l}{\eta_{l,t}} (f_l \otimes G_{l,t}) \Big\|^2_{\Ubar_T^{-1}} \geq \frac{2R^2 \theta \tau}{m} \log \Big[\frac{\det(\Ubar_T)^{1/2} \det(\Mtil)^{-1/2}}{\delta} \Big]\Bigg)  \leq \delta.  
\end{equation}
Recalling the identity in \eqref{eq:temp_bd_2_subsp_proof} we then obtain via a standard covering number argument (see e.g., \cite[Lemma 4.4.1]{HDPbook}), for any $z > 0$, 
\begin{align*} 
&\prob\Bigg( \Big\|\Big(\sum_{l=1}^m (F_l \otimes P_l) + \Mtil \Big)^{-1/2} \Big(\sum_{l=1}^m F_l \otimes Z_l \Big)  \Big\|_2^2 \geq z^2 \Bigg)  \\
&\leq 5^{\tau d} \sup_{u \in \mathbb{S}^{\tau d - 1}} \prob\Bigg(\Big\| \Big(\sum_{l=1}^m (F_l \otimes P_l) + \Mtil \Big)^{-1/2} \Big(\sum_{l=1}^m F_l \otimes Z_l \Big) w \Big\|_2^2 \geq z^2/4 \Bigg)  
\end{align*}
Using this with \eqref{eq:subsp_temp_4}, we then obtain
\begin{align} \label{eq:subsp_temp_5}
\prob\Bigg(\Big\|\Big(\sum_{l=1}^m (F_l \otimes P_l) + \Mtil \Big)^{-1/2} \Big(\sum_{l=1}^m F_l \otimes Z_l \Big)  \Big\|_2^2 \geq \frac{8 R^2 \theta \tau}{m} \log \Big[\frac{5^{\tau d}\det(\Ubar_T)^{1/2} \det(\Mtil)^{-1/2}}{\delta} \Big] \Bigg) \leq \delta.
\end{align}
It remains to upper (L\"owner) bound $U_T$.
\begin{lemma} \label{lem:lowner_bd_Umat}
    For any $\delta \in (0,1)$, $$\prob\left(U_T \preceq \frac{\theta \tau \gamma_2(m,T)}{\delta m}  I_{\tau d} \right) \geq 1-\delta. $$
\end{lemma}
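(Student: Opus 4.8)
\textbf{Proof proposal for Lemma \ref{lem:lowner_bd_Umat}.} The plan is to observe that $U_T = \sum_{l=1}^m(F_l \otimes P_l)$ is a sum of positive semidefinite matrices (each $F_l \succeq 0$ and $P_l \succeq 0$), hence $U_T \succeq 0$ and an upper L\"owner bound reduces to a bound on its trace, namely $U_T \preceq \Tr(U_T)\,I_{\tau d}$. Since $\Tr(U_T) \geq 0$, it then suffices to bound $\Tr(U_T)$ in probability via Markov's inequality, which only requires controlling $\expec[\Tr(U_T)]$. So the whole argument is a first-moment computation — no concentration is needed, which is why the resulting bound is (deliberately) crude in $\delta$; this is harmless because $U_T$ enters Proposition \ref{prop:spec_norm_bd_subsp_constr} (and the proof of Lemma \ref{lem:eigval_control_mat_subspace_method}) only through $\det(\Ubar_T)$, i.e.\ inside a logarithm.

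The first step is the Kronecker identity $\Tr(F_l \otimes P_l) = \Tr(F_l)\Tr(P_l) = \norm{f_l}_2^2\,\Tr(P_l)$, combined with the $\theta$-delocalization bound $\norm{f_l}_2^2 \leq \theta\tau/m$ (and $\Tr(P_l)\geq 0$), which yields
\begin{equation*}
\Tr(U_T) = \sum_{l=1}^m \norm{f_l}_2^2\,\Tr(P_l) \leq \frac{\theta\tau}{m}\sum_{l=1}^m \Tr(P_l).
\end{equation*}
The second step is to compute $\expec[\Tr(P_l)]$. Writing $P_l = A_l^* Y_{l,T-1}(A_l^*)^\top$ with $Y_{l,T-1}=\sum_{t=1}^{T-1}x_{l,t}x_{l,t}^\top$, using $x_{l,0}=0$ and the closed form $x_{l,t}=\sum_{k=1}^t(A_l^*)^{t-k}\eta_{l,k}$ together with Assumption \ref{eq:eta_assump} (so $\expec[\eta_{l,k}\eta_{l,k}^\top]=I_d$), one gets $\expec[x_{l,t}x_{l,t}^\top]=\Gamma_{t-1}(A_l^*)$. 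Then, using the elementary identity $A\,\Gamma_s(A)\,A^\top = \Gamma_{s+1}(A) - I$,
\begin{align*}
\expec[\Tr(P_l)] &= \Tr\!\Big(A_l^*\Big(\sum_{t=1}^{T-1}\Gamma_{t-1}(A_l^*)\Big)(A_l^*)^\top\Big) = \sum_{s=0}^{T-2}\Tr\big(\Gamma_{s+1}(A_l^*)-I\big) \\
&= \sum_{u=1}^{T-1}\Tr\big(\Gamma_u(A_l^*)-I\big) \;\leq\; \sum_{t=1}^{T}\Tr\big(\Gamma_t(A_l^*)-I\big),
\end{align*}
where the last inequality holds because $\Gamma_t(A_l^*)-I = \sum_{j=1}^t (A_l^*)^j((A_l^*)^j)^\top \succeq 0$, so each omitted summand has nonnegative trace. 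Summing over $l$ gives $\sum_{l=1}^m\expec[\Tr(P_l)] \leq \gamma_2(m,T)$, and hence $\expec[\Tr(U_T)] \leq \frac{\theta\tau}{m}\gamma_2(m,T)$.

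The final step is Markov's inequality: $\prob\big(\Tr(U_T) \geq t\big) \leq \frac{\theta\tau\gamma_2(m,T)}{m\,t}$, and choosing $t = \frac{\theta\tau\gamma_2(m,T)}{\delta m}$ makes the right-hand side at most $\delta$. On the complementary event, $U_T \preceq \Tr(U_T)\,I_{\tau d} \preceq \frac{\theta\tau\gamma_2(m,T)}{\delta m}\,I_{\tau d}$, which is exactly the claim. I do not anticipate any real obstacle here; the only point requiring care is the index bookkeeping that identifies $\expec[\Tr(P_l)]$ with a partial sum of the terms defining $\gamma_2(m,T)$, and the (routine) verification of $\expec[x_{l,t}x_{l,t}^\top]=\Gamma_{t-1}(A_l^*)$ and the Gramian identity $A\Gamma_s(A)A^\top=\Gamma_{s+1}(A)-I$.
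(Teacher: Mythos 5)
Your proof is correct and follows essentially the same route as the paper's: both reduce the L\"owner bound to a scalar bound via the delocalization estimate $\norm{f_l}_2^2 \leq \theta\tau/m$, compute the first moment of the resulting trace (arriving at $\gamma_2(m,T)$ through the identity $A\,\Gamma_s(A)\,A^\top = \Gamma_{s+1}(A) - I$), and finish with Markov's inequality. The only cosmetic difference is that you bound $\Tr(U_T)$ directly via $\Tr(F_l\otimes P_l)=\Tr(F_l)\Tr(P_l)$, whereas the paper bounds $\norm{U_T}_2$ term by term and then each spectral norm by a trace -- the two computations coincide.
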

\begin{proof}
    Note that $\sum_{t=1}^T G_{l,t} G_{l,t}^\top = A^*_l Y_{l,T-1} (A^*_l)^\top$ where we recall $Y_{l,T} :=  \sum_{t=1}^T x_{l,t} x_{l,t}^\top$. Hence $\norm{U_T}_2$ can be bounded as
    \begin{align*}
        \norm{U_T}_2 &= \Big\|\sum_{l=1}^m \Big((f_l f_l^\top) \otimes (A^*_l Y_{l,T-1} (A^*_l)^\top )\Big) \Big\|_2 \\ 
        &\leq \sum_{l=1}^m \norm{f_l}_2^2 \ \norm{A^*_l Y_{l,T-1} (A^*_l)^\top}_2 \\
        &\leq (\theta\tau/m)  \sum_{l=1}^m \norm{A^*_l Y_{l,T-1} (A^*_l)^\top}_2 \tag{since $\norm{f_l}_2^2 \leq \theta\tau/m$}\\
        &\leq (\theta\tau/m) \sum_{l=1}^m \sum_{t=1}^{T-1} x_{l,t}^\top  (A^*_l)^\top A^*_l x_{l,t} \\
        &= (\theta\tau/m) \sum_{l=1}^m \tilde{\eta}_l^\top \tilde{A}_l^\top (I_T \otimes (A^*_l)^\top) (I_T \otimes A^*_l) \tilde{A}_l \tilde{\eta}_l,
    \end{align*}
    where the last equality uses notation from Section \ref{appsec:proof_lemma_Qmat_bd}. This implies 
\begin{align*}
    \expec\Big[\norm{U_T}_2 \Big] 
    &\leq (\theta\tau/m) \sum_{l=1}^m \expec\Big[\tilde{\eta}_l^\top \tilde{A}_l^\top (I_T \otimes (A^*_l)^\top) (I_T \otimes A^*_l) \tilde{A}_l\tilde{\eta}_l \Big] \\
    &= (\theta\tau/m) \sum_{l=1}^m \Tr\Big(\tilde{\eta}_l^\top \tilde{A}_l^\top (I_T \otimes (A^*_l)^\top) (I_T \otimes A^*_l) \tilde{A}_l\tilde{\eta}_l \Big) \\
    &= (\theta\tau/m) \sum_{l=1}^m \sum_{t=1}^T \Tr(\Gamma_t(A^*_l) - I_d) = (\theta\tau/m)  \gamma_2(m,T).
 \end{align*}
 Applying Markov's inequality then completes the proof.
\end{proof}
Choosing $\Mtil = T I_{\tau d}$, Lemma \ref{lem:lowner_bd_Umat} implies
$\Ubar_T \preceq \Big(1 + \frac{2\tau \gamma_2(m,T)}{\delta m} \Big)  I_{\tau d}$ which upon plugging in \eqref{eq:subsp_temp_5}, and some minor simplification, leads to the statement of the proposition. 

%-------------------------------------------------------
% Proof of proposition
%
\subsection{Proof of Proposition \ref{prop:conc_excit_subsp_constr}}
Since Kronecker product with identity matrix preserves the semi-definite order, we will focus on bounding $\sum_{l=1}^m  (F_l \otimes N_l) = \sum_{l=1}^m \sum_{t=1}^T (f_l \otimes \eta_{l,t}) (f_l \otimes \eta_{l,t})^\top$. The idea is to follow the steps outlined in \cite[Theorem 4.6.1]{HDPbook} and adapt it to our setting. 

While the vectors $(f_l \otimes \eta_{l,t})_{l,t}$ are independent, they are not isotropic, and this prevents us from using \cite[Theorem 4.6.1]{HDPbook} directly. Indeed, for any $U = [u_1 \ u_2 \ \cdots u_{\tau}]\in \matR^{d \times \tau}$ with $\norm{U}_F = 1$ and denoting $u=\vect(U)$, note that 
\begin{align*}
    \expec\Big[\dotprod{u}{f_l \otimes \eta_{l,t}}^2 \Big] = \expec \Big[\dotprod{\eta_{l,t}}{\sum_{i=1}^\tau f_{l,i} u_i}^2 \Big] = \norm{U f_l}_2^2
\end{align*}
which is not necessarily equal to $\norm{U}_F^2$ in general, and could also be zero depending on $U$. Nevertheless the idea in the proof of \cite[Theorem 4.6.1]{HDPbook} still applies -- we will first show a high probability bound on $u^\top(\sum_{l=1}^m  (F_l \otimes N_l))u$ for a given $u \in \mathbb{S}^{\tau d - 1}$, and then use a standard covering number argument to obtain a bound holding uniformly over $\mathbb{S}^{\tau d - 1}$.

To this end, first note that 
\begin{equation*}
    \expec\Big[\sum_{l=1}^m  (F_l \otimes N_l)\Big] = \sum_{l=1}^m \sum_{t=1}^T (f_l f_l^\top) \otimes \underbrace{\expec[\eta_{l,t} \eta_{l,t}^\top]}_{= I_d} =  \sum_{t=1}^T \Big(\underbrace{\sum_{l=1}^m f_l f_l^\top}_{=I_\tau} \Big) \otimes I_d = T I_{\tau d}.
\end{equation*}
Let $\calN_{\epsilon}$ be an $\epsilon$-net for $\mathbb{S}^{\tau d - 1}$, then we know from \cite[Ex. 4.4.3]{HDPbook} that 
\begin{align*}
    \Big\|\frac{1}{T} \sum_{l=1}^m  (F_l \otimes N_l) - I_{\tau d}  \Big\|_2 \leq \frac{1}{1-2\epsilon} \sup_{u \in \calN_{\epsilon}} \abs{\dotprod{\Big(\frac{1}{T}\sum_{l=1}^m  (F_l \otimes N_l) - I_{\tau d} \Big)u}{u}}.
\end{align*}
Moreover, we also know that $\abs{\calN_{\epsilon}} \leq (1+(2/\epsilon))^{\tau d}$. Choose $\epsilon = 1/4$ from now onwards, so, $\abs{\calN_{1/4}} \leq 9^{\tau d}$ and 
\begin{align} \label{eq:proof_excit_conc_temp1}
    \Big\|\frac{1}{T} \sum_{l=1}^m  (F_l \otimes N_l) - I_{\tau d}  \Big\|_2 \leq 2 \sup_{u \in \calN_{1/4}} \abs{\frac{1}{T} u^\top \Big(\sum_{l=1}^m F_l \otimes N_l \Big) u - 1}.
\end{align}
Now we write
\begin{align*}
    \frac{1}{T} u^\top \Big(\sum_{l=1}^m F_l \otimes N_l \Big) u = \frac{1}{T} \sum_{t=1}^T \sum_{l=1}^m \dotprod{\eta_{l,t}}{U f_l}^2 = \frac{1}{T} \sum_{t=1}^T \sum_{l=1}^m B_{l,t}(u)
\end{align*}
where $B_{l,t}(u) := \dotprod{\eta_{l,t}}{U f_l}^2$ are independent sub-exponential random variables. Moreover, we can bound the $\psi_1$ norm (see for e.g., \cite[Section 2.7, Lemma 2.7.6]{HDPbook}) of $B_{l,t}(u)$ as
\begin{equation*}
    \norm{B_{l,t}(u)}_{\psi_1} = \norm{\dotprod{\eta_{l,t}}{U f_l}}_{\psi_2}^2 \leq c_1 R^2 \norm{U f_l}_2^2 \leq c_1 R^2 \frac{\theta\tau}{m}.
\end{equation*}
Furthermore, $\expec[B_{l,t}(u)] = \norm{U f_l}_2^2$, and so, 
\begin{align*}
    \frac{1}{T} u^\top \Big(\sum_{l=1}^m F_l \otimes N_l \Big) u - 1 &= \frac{1}{T} \sum_{t=1}^T \sum_{l=1}^m \Big(\underbrace{B_{l,t}(u) - \expec[B_{l,t}(u)]}_{=: \tilde{B}_{l,t}(u)} \Big)
\end{align*}
where $\norm{\tilde{B}_{l,t}(u)}_{\psi_1} \leq c_2 \norm{B_{l,t}(u)}_{\psi_1} \leq c_3 R^2 \theta\tau/m$.

Then using Bernstein inequality for sums of independent centered sub-exponential random variables \cite[Theorem 2.8.2]{HDPbook}, we obtain for any $t > 0$
\begin{align*}
    \prob\Bigg(\frac{1}{T} \abs{\sum_{t=1}^T \sum_{l=1}^m \tilde{B}_{l,t}(u)} \geq t \Bigg) \leq 2 \exp\Big(-c_5 \min\set{\frac{t^2}{R^4 \theta^2 \tau^2}, \frac{t}{R^2 \theta \tau}} Tm \Big).
\end{align*}
Choosing $t = \varepsilon/2$ where $\varepsilon = R^2 \theta\tau \max(\delta,\delta^2)$ with $\delta > 0$, leads to 
\begin{equation*}
    \prob\Bigg(\frac{1}{T} \abs{\sum_{t=1}^T \sum_{l=1}^m \tilde{B}_{l,t}(u)} \geq \frac{\varepsilon}{2} \Bigg) \leq 2 \exp\Big(-c_5 \delta^2 Tm \Big).
\end{equation*}
Now choose $\delta = c_6 \sqrt{\frac{\theta \tau d}{T m}}$ for constant $c_6 > 0$ suitably large, this implies 
\begin{align*}
    \prob\Bigg(\max_{u \in \calN_{1/4}} \frac{1}{T} \abs{\sum_{t=1}^T \sum_{l=1}^m \tilde{B}_{l,t}(u)} \geq \frac{\varepsilon}{2} \Bigg) \leq 9^{\tau d} 2 \exp(-c_5 \delta^2 Tm) \leq  2 \exp(-c_7 \theta\tau d) 
\end{align*}
for some constant $c_7 > 0$. Plugging this in \eqref{eq:proof_excit_conc_temp1}, we finally obtain
\begin{align*}
    \prob \left(\Big\| \sum_{l=1}^m  (F_l \otimes N_l) - T I_{\tau d}  \Big\|_2 \leq C R^2 \max\set{\sqrt{\frac{\theta^3 \tau^3 d T}{ m}}, \frac{\theta^2 \tau^2 d}{m}} \right) \geq 1 - 2 \exp(-c \theta \tau d)
\end{align*}
for appropriate constants $C, c > 0$. The statement of the proposition now follows trivially.

%--------------------------
% Proof of proposition
%
\subsection{Proof of Proposition \ref{prop:e3_bd_imp_event_subsp}}
  Recall the definition of $f_l \in \matR^\tau$ from \eqref{eq:f_l_def}. We then begin by observing that
    \begin{align}
        \Vtau^\top Q^\top \eta &= \sum_{l=1}^m \sum_{t=1}^T f_l \otimes (x_{l,t} \otimes \eta_{l,t+1}), \nonumber \\
        \text{ and } \ \Vtau^\top Q^\top Q \Vtau &= \sum_{l=1}^m \sum_{t=1}^T \Big[(f_l \otimes x_{l,t}) (f_l \otimes x_{l,t})^\top \Big] \otimes I_d, \label{eq:vtau_qq_exp_subcons}
    \end{align}
    and so our goal is to bound
    \begin{align*}
        \Big\|\Big(\sum_{l=1}^m \sum_{t=1}^T \Big[(f_l \otimes x_{l,t}) (f_l \otimes x_{l,t})^\top \Big] \otimes I_d + (T/4)I_{\tau d^2}\Big)^{-1/2} \Big(\sum_{l=1}^m \sum_{t=1}^T f_l \otimes (x_{l,t} \otimes \eta_{l,t+1}) \Big)\Big\|_2^2.
    \end{align*}
We proceed now as in the proof of Propositon \ref{prop:bd_self_norm_laplsmooth}. To this end, with $(\calF_t)_{t \geq 1}$ a filtration as before, we have the following analogous result to \cite[Lemma 9]{abbasi11} for our setting.
\begin{lemma}
For any $\mu \in \matR^{\tau d^2}$, consider for any $t \geq 1$,
    \begin{align*}
        M_{t}^\mu := \exp\Bigg( \sum_{s=1}^t \sum_{l=1}^m\left[  \frac{\dotprod{\mu}{(f_l \otimes x_{l,s}) \otimes \eta_{l,s+1}}}{R} - \frac{\norm{\mu}_{(f_lf_l^\top) \otimes (x_{l,s}x_{l,s}^\top) \otimes I_d}^2}{2} \right] \Bigg).
    \end{align*}
    Let $\stoptime$ be a stopping time w.r.t $(\calF_{t})_{t=1}^\infty$. Then $M_{\stoptime}^\mu$ is well defined and $\expec[M_{\stoptime}^\mu] \leq 1$.    
\end{lemma}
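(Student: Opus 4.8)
The plan is to show that $(M_t^\mu)_{t\geq 1}$ is a nonnegative supermartingale with respect to $(\calF_t)_{t=1}^\infty$, and then to transfer this to the (possibly infinite) stopping time $\stoptime$ exactly as in \cite[Lemma 9]{abbasi11}. That last step — defining $M_{\stoptime}^\mu$ on $\{\stoptime=\infty\}$ via the a.s.\ convergence of the nonnegative supermartingale, and bounding $\expec[M_{\stoptime}^\mu]\leq 1$ by applying Fatou's lemma to $M_{\min(\stoptime,t)}^\mu$ — is word-for-word the argument from \cite{abbasi11}, so I would simply invoke it, as was already done in the two analogous lemmas in this paper (the one in the proof of Proposition \ref{prop:bd_self_norm_laplsmooth} and the one in the proof of Proposition \ref{prop:spec_norm_bd_subsp_constr}). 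Hence essentially everything reduces to verifying the one-step supermartingale inequality.

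For that I would write $M_t^\mu=\prod_{s=1}^t D_s^\mu$, where $D_s^\mu$ is the exponential of the $s$-th summand, and reshape $\mu\in\matR^{\tau d^2}$ as a matrix $\Phi_\mu\in\matR^{\tau d\times d}$ with $\mu=\vect(\Phi_\mu)$ (the $\tau d$ factor carrying the $f_l\otimes x_{l,s}$ block, the $d$ factor carrying $\eta_{l,s+1}$). The point of this reshaping is the elementary Kronecker identities
\begin{equation*}
\dotprod{\mu}{(f_l\otimes x_{l,s})\otimes \eta_{l,s+1}} = \dotprod{h_{l,s}}{\eta_{l,s+1}}, \qquad \norm{\mu}_{(f_lf_l^\top)\otimes(x_{l,s}x_{l,s}^\top)\otimes I_d}^2 = \norm{h_{l,s}}_2^2,
\end{equation*}
with $h_{l,s}:=\Phi_\mu^\top(f_l\otimes x_{l,s})\in\matR^d$. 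Since $x_{l,0}=0$, each $x_{l,s}$ is a measurable function of $\eta_{l,1},\dots,\eta_{l,s}$, so $h_{l,s}$ is $\calF_s$-measurable while $\eta_{l,s+1}$ is $\calF_{s+1}$-measurable; consequently $D_s^\mu$ and $M_s^\mu$ are $\calF_{s+1}$-measurable, and $M_{t-1}^\mu$ is $\calF_t$-measurable.

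Then, conditioning on $\calF_t$, I would use that $\eta_{l,t+1}$ is independent of $\calF_t$ (by Assumption \ref{eq:eta_assump}) with independent $R$-subgaussian coordinates, so that by the bound $\expec[e^{\beta X}]\leq e^{\beta^2R^2/2}$ applied coordinatewise with $\beta=h_{l,t,i}/R$,
\begin{equation*}
\expec\Big[\exp\big(\tfrac1R\dotprod{h_{l,t}}{\eta_{l,t+1}}\big)\,\big|\,\calF_t\Big]=\prod_{i=1}^d\expec\Big[\exp\big(\tfrac{h_{l,t,i}}{R}\eta_{l,t+1,i}\big)\,\big|\,\calF_t\Big]\leq\exp\big(\tfrac12\norm{h_{l,t}}_2^2\big),
\end{equation*}
which exactly cancels the quadratic term. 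Multiplying over $l$ (using independence across $l$) gives $\expec[D_t^\mu\mid\calF_t]\leq 1$ a.s., and since $M_{t-1}^\mu$ is $\calF_t$-measurable, $\expec[M_t^\mu\mid\calF_t]=M_{t-1}^\mu\expec[D_t^\mu\mid\calF_t]\leq M_{t-1}^\mu$; in particular $\expec[M_t^\mu]\leq 1$ for every fixed $t$. The passage to $\stoptime$ is then the cited argument from \cite[Lemma 9]{abbasi11}.

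I do not anticipate a genuine obstacle: this statement is the exact counterpart, for the coupled noise vector $(f_l\otimes x_{l,s})\otimes\eta_{l,s+1}$, of the two supermartingale lemmas already proved. The only thing requiring (routine) care is the reshaping/Kronecker identity above that collapses the linear and quadratic terms into the scalar form $\tfrac1R\dotprod{h_{l,t}}{\eta_{l,t+1}}-\tfrac12\norm{h_{l,t}}_2^2$; once that is in place, the one-step estimate is immediate from coordinatewise subgaussianity.
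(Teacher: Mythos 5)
Your proposal is correct and follows essentially the same route as the paper: define the one-step factor $D_t^\mu$, note that $h_{l,t}$ is $\calF_t$-measurable while $\eta_{l,t+1}$ is independent of $\calF_t$, verify $\expec[D_t^\mu\mid\calF_t]\leq 1$ via the coordinatewise subgaussian MGF bound (which exactly cancels the quadratic term), conclude the supermartingale property, and delegate the stopping-time step to \cite[Lemma 9]{abbasi11}. The paper leaves the one-step inequality as ``easy to verify''; your Kronecker reshaping to $h_{l,s}=\Phi_\mu^\top(f_l\otimes x_{l,s})$ is precisely the computation that justifies it, mirroring what is spelled out for the analogous lemma in the proof of Proposition \ref{prop:bd_self_norm_laplsmooth}.
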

\begin{proof}
Denoting
    \begin{equation*}
        D_{t}^\mu := \exp\Bigg(  \sum_{l=1}^m\left[  \frac{\dotprod{\mu}{f_l \otimes (x_{l,t} \otimes \eta_{l,t+1})}}{R} - \frac{\norm{\mu}_{f_lf_l^\top \otimes ((x_{l,t}x_{l,t}^\top) \otimes I_d)}^2}{2} \right] \Bigg), 
    \end{equation*}
    Observe that $D_{t}^\mu, M_{t}^\mu$ are both $\calF_{t+1}$-measurable. Moreover, it is easy to verify that $\expec[D_{t}^\mu \vert \calF_{t}] \leq 1$w which implies $(M_{t}^\mu)_{t=1}^{\infty}$ is a supermartingale, and also $\expec[M_{t}^\mu] \leq 1$. The arguments for showing that $M_{\stoptime}^\mu$ is well-defined, and $\expec[M_{\stoptime}^\mu] \leq 1$ is identical to \cite[Lemma 9]{abbasi11}, and hence omitted
\end{proof}
Denoting $\Mbar = \frac{T}{4} I_{\tau d^2}$ for convenience, we obtain in a completely analogous manner as in the proof of \cite[Theorem 1]{abbasi11}, that for any $\delta\in (0,1)$, it holds w.p at least $1-\delta$, 
\begin{align}
    \Big \|(\Vtau^\top Q^\top Q \Vtau +\Mbar)^{-1/2} (\Vtau^\top Q^\top \eta) \Big\|_2^2 
    &\leq 2R^2 \log \Bigg(\frac{\det(\Vtau^\top Q^\top Q \Vtau + \Mbar)^{1/2} \det(\Mbar)^{-1/2}}{\delta} \Bigg)  \nonumber \\
    &= 2R^2 \log \Bigg(\frac{\det(\frac{4 \Vtau^\top Q^\top Q \Vtau}{T} + I_{\tau d^2})^{1/2}}{\delta} \Bigg). \label{eq:selfnorm_excit_subcons_temp1}
\end{align}
It remains to upper (L\"owner) bound $\Vtau^\top Q^\top Q \Vtau$. Using the expression in \eqref{eq:vtau_qq_exp_subcons}, we see that
\begin{align*}
    \norm{\Vtau^\top Q^\top Q \Vtau}_2 = \norm{\sum_{l=1}^m \sum_{t=1}^T (f_l f_l^\top) \otimes (x_{l,t} x_{l,t})^\top }_2 \leq \left(\frac{\theta\tau}{m}\right) \sum_{l=1}^m \norm{\sum_{t=1}^Tx_{l,t} x_{l,t}^\top}_2.
\end{align*}
Then in an analogous manner to the proof of Lemma \ref{lem:lowner_bd_Umat}, one can readily verify that $\norm{\sum_{t=1}^Tx_{l,t} x_{l,t}^\top}_2 \leq  \gamma_3(T,m)/\delta$ w.p at least $1-\delta$, for $\gamma_3(T,m)$ as defined in \eqref{eq:gamma_conds_sub_theorem}. Using the subsequent bound $\Vtau^\top Q^\top Q \Vtau \preceq \frac{\theta \tau \gamma_3(T,m)}{m\delta} I_{\tau d^2}$ in \eqref{eq:selfnorm_excit_subcons_temp1} then leads to the statement of the proposition after some minor simplification.

% Useful tools
%-------------- 
% Appendix
%--------------
\section{Useful tools} \label{sec:useful_tools}
The following result was used implicitly in \cite[Section 9]{Sarkar19}, and is obtained from arguments used within the proof of \cite[Proposition 7.5]{Sarkar19}. For convenience, we outline the proof below.
\begin{proposition}[\cite{Sarkar19}] \label{prop:stable_mat_trace_bound}
    Let $A = P^{-1} \Lambda P$ be the Jordan Canonical Form of $A$ where $\Lambda = \blkdiag(J_{k_1}(\lambda_1),\dots,J_{k_s}(\lambda_s))$, with $J_k(\lambda) = \lambda I_k + N_k$ denoting a $k \times k$ Jordan block. Here $N_k$ is a nilpotent matrix with $1$'s on the first upper off-diagonal and $0$'s everywhere else. If $\rho(A) \leq 1 + \frac{c}{T}$ for some constant $c > 0$, and $T \geq 2$, then 
    \begin{align*}
        \Tr(\Gamma_T(A) - I) \leq c_1 e^{2c} \Big(\sum_{i=1}^s T^{2k_i - 1} \Big) \norm{P}_2^2 \norm{P^{-1}}_2^2
    \end{align*}
    for some constant $c_1 > 0$. If $T = 1$ then $\Tr(\Gamma_T(A) - I) \leq e^{2c} (\sum_{i=1}^s k_i^2) \norm{P}_2^2 \norm{P^{-1}}_2^2$
\end{proposition}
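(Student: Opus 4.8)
The plan is to reduce the computation of $\Tr(\Gamma_T(A) - I) = \sum_{k=1}^{T} \Tr(A^k (A^k)^\top)$ to a computation involving powers of a single Jordan block, using the Jordan decomposition $A = P^{-1}\Lambda P$. First I would write $A^k = P^{-1}\Lambda^k P$, so that $\Tr(A^k(A^k)^\top) = \Tr(P^{-1}\Lambda^k P P^\top (\Lambda^k)^\top P^{-\top}) \leq \norm{P}_2^2 \norm{P^{-1}}_2^2 \, \Tr(\Lambda^k (\Lambda^k)^\top) = \norm{P}_2^2 \norm{P^{-1}}_2^2 \norm{\Lambda^k}_F^2$, where the inequality uses $\Tr(BMM^\top B^\top) \leq \norm{B}_2^2 \Tr(MM^\top)$ twice. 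Summing over $k$ gives $\Tr(\Gamma_T(A)-I) \leq \norm{P}_2^2\norm{P^{-1}}_2^2 \sum_{k=1}^T \norm{\Lambda^k}_F^2$. Since $\Lambda$ is block-diagonal with Jordan blocks $J_{k_i}(\lambda_i)$, we have $\norm{\Lambda^k}_F^2 = \sum_{i=1}^s \norm{J_{k_i}(\lambda_i)^k}_F^2$, so it suffices to bound $\sum_{k=1}^T \norm{J_{k_0}(\lambda)^k}_F^2$ for a single block of size $k_0$ with $|\lambda| \leq 1 + c/T$.

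The core of the argument is the explicit formula for powers of a Jordan block: $J_{k_0}(\lambda)^k = \sum_{j=0}^{k_0-1} \binom{k}{j} \lambda^{k-j} N_{k_0}^j$, whose nonzero entries are $\binom{k}{j}\lambda^{k-j}$ for $j = 0, \dots, \min(k, k_0-1)$, each appearing on the $j$-th superdiagonal (with $k_0 - j$ copies). Therefore $\norm{J_{k_0}(\lambda)^k}_F^2 = \sum_{j=0}^{k_0-1} (k_0 - j) \binom{k}{j}^2 |\lambda|^{2(k-j)} \leq k_0 \sum_{j=0}^{k_0-1} \binom{k}{j}^2 |\lambda|^{2(k-j)}$. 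Using $\binom{k}{j} \leq k^j \leq k^{k_0-1}$ for $j \leq k_0 - 1$ and $|\lambda|^{2(k-j)} \leq |\lambda|^{2k} \leq (1+c/T)^{2k} \leq (1+c/T)^{2T} \leq e^{2c}$ for $k \leq T$, we get $\norm{J_{k_0}(\lambda)^k}_F^2 \leq k_0^2 \, k^{2(k_0-1)} e^{2c} \leq k_0^2 \, T^{2(k_0-1)} e^{2c}$ (for $k \geq 1$; one can absorb small-$k$ cases into the constant). Summing over $k=1$ to $T$ yields $\sum_{k=1}^T \norm{J_{k_0}(\lambda)^k}_F^2 \leq k_0^2 \, T^{2k_0 - 1} e^{2c}$, and since $k_0^2 = O(1)$ can be absorbed into $c_1$ (or left explicit), summing over the blocks $i=1,\dots,s$ gives $\Tr(\Gamma_T(A)-I) \leq c_1 e^{2c}\big(\sum_{i=1}^s T^{2k_i-1}\big)\norm{P}_2^2\norm{P^{-1}}_2^2$. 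For the $T=1$ case, $\norm{J_{k_0}(\lambda)^1}_F^2 = (k_0 - 1)|\lambda|^2 + k_0 \cdot 0$ — wait, more carefully $J_{k_0}(\lambda) = \lambda I + N$ has $k_0$ diagonal entries equal to $\lambda$ and $k_0 - 1$ superdiagonal entries equal to $1$, so $\norm{J_{k_0}(\lambda)}_F^2 = k_0|\lambda|^2 + (k_0-1) \leq k_0^2(1 + c)^2 \cdot (\text{const}) \leq k_0^2 e^{2c}$ after adjusting constants, giving $\Tr(\Gamma_1(A)-I) \leq e^{2c}(\sum_i k_i^2)\norm{P}_2^2\norm{P^{-1}}_2^2$ as claimed.

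The main obstacle, such as it is, is keeping careful track of the binomial-coefficient bounds and making sure the exponent of $T$ comes out exactly as $2k_i - 1$ rather than, say, $2k_i$ — this requires using $\sum_{k=1}^T k^{2(k_0-1)} \asymp T^{2k_0-1}$ (an integral comparison or the bound $\sum_{k=1}^T k^{p} \leq T^{p+1}$) rather than the cruder $T \cdot T^{2(k_0-1)} = T^{2k_0-1}$, which already suffices here, so in fact even the crude bound works. The only genuinely delicate point is that the $|\lambda| > 1$ regime is controlled solely through the truncation at $k \leq T$: had we summed to $\infty$ the series would diverge, so it is essential that $\Gamma_T$ has finitely many terms and that $(1+c/T)^{2T}$ is bounded by $e^{2c}$ uniformly in $T$. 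Everything else is routine linear algebra, and the constants $k_0^2 \leq d^2$ and the number of blocks $s \leq d$ are harmless.
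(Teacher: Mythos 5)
Your overall route is the same as the paper's: reduce to $\sum_{t=1}^T \|\Lambda^t\|_F^2$ via $\|P^{-1}\Lambda^t P\|_F^2 \le \|P\|_2^2\|P^{-1}\|_2^2\|\Lambda^t\|_F^2$, decompose block by block, and control a single Jordan block using the truncation at $T$ together with $(1+c/T)^{2T}\le e^{2c}$. Where the paper conjugates by a unitary diagonal matrix and imports eq.~(34) of \cite{Sarkar19} for the diagonal entries of $\sum_t J^t (J^t)^H$, you compute $\|J_{k_0}(\lambda)^k\|_F^2$ directly from the binomial expansion of the Jordan-block power; that is a legitimate, self-contained alternative to citing the external estimate.

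The one genuine problem is the claim that the factor $k_0^2$ ``is $O(1)$ and can be absorbed into $c_1$''. The block size $k_i$ can be as large as $d$, and $d$ is not a constant in this paper: the downstream bound $\Tr(\Gamma_T(A_l^*))\lesssim d\,T^{\alpha_l}\kappa^2(P_l)$ tracks $d$ explicitly, and your extra $k_i^2$ would degrade the $d$ there to $d^3$. As written you prove the proposition only with $c_1$ replaced by $c_1\max_i k_i^2$. The fix is to avoid bounding every term of $\sum_{j=0}^{k_0-1}(k_0-j)\binom{k}{j}^2|\lambda|^{2(k-j)}$ by the largest one and multiplying by the count. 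Instead use $\binom{k}{j}\le k^j$ and sum the resulting geometric-type series: for $k\ge 2$,
$\sum_{j=0}^{k_0-1}(k_0-j)k^{2j}=\sum_{i=1}^{k_0} i\,k^{2(k_0-i)}\le k^{2(k_0-1)}\sum_{i\ge 1} i\,4^{-(i-1)}\lesssim k^{2(k_0-1)}$,
since the top term dominates; the $k=1$ term contributes at most $(2k_0-1)e^{2c}\lesssim T^{2k_0-1}e^{2c}$ for $T\ge 2$. Then $\sum_{k=1}^T\|J_{k_0}(\lambda)^k\|_F^2\lesssim e^{2c}T^{2k_0-1}$ with a truly universal constant, matching the statement. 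This geometric-sum step is exactly the role played by $T\sum_{j=0}^{k_1-l}T^{2j}\lesssim T^{2(k_1-l)+1}$ in the paper's proof. Your $T=1$ case is correct. A cosmetic point: since $\Lambda$ and $P$ are in general complex, the transposes in your reduction should be Hermitian transposes, as in the paper; this changes nothing substantive.
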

\begin{proof}
 We will follow the steps in  \cite[Proposition 7.5]{Sarkar19}. First note that
 \begin{equation*}
  \norm{A^t}_F^2  = \norm{P^{-1} \Lambda^t P}_F^2 \leq \norm{P^{-1}}_2^2 \norm{P}_2^2 \norm{\Lambda^t}_F^2,
 \end{equation*}
 which in turn implies
 \begin{equation*}
     \Tr(\Gamma_T(A) - I) = \sum_{t=1}^T \Tr(A^t (A^t)^{\top}) \leq \norm{P^{-1}}_2^2 \norm{P}_2^2 \sum_{t=1}^T \Tr(\Lambda^t (\Lambda^t)^H).
 \end{equation*}
Here ${(\Lambda^t)^H}$ denotes the Hermitian transpose of $\Lambda^t$ as $\Lambda$ may have complex-value entries. Since
\begin{equation*}
    \Lambda^t (\Lambda^t)^H = 
\begin{pmatrix}
    \sum_{t=1}^T J^t_{k_1}(\lambda_1) (J^t_{k_1}(\lambda_1))^H & & &\\
    & \ddots & & \\
    & & \ddots & \\
    & & &\sum_{t=1}^T J^t_{k_s}(\lambda_s) (J^t_{k_s}(\lambda_s))^H
\end{pmatrix},
\end{equation*}
it will suffice to bound $\Tr(\sum_{t=1}^T J^t_{k_1}(\lambda_1) (J^t_{k_1}(\lambda_1))^H)$. Now as mentioned in \cite[Proposition 7.5]{Sarkar19}, there is a $k_1 \times k_1$ unitary diagonal matrix $D$ such that $D (\lambda_1 I_{k_1} + N_{k_1}) (\lambda_1 I_{k_1} + N_{k_1})^H D^H$ is a real matrix, and in particular, 
\begin{equation*}
    D (\lambda_1 I_{k_1} + N_{k_1}) (\lambda_1 I_{k_1} + N_{k_1})^H D^H = (\abs{\lambda_1} I_{k_1} + N_{k_1}) (\abs{\lambda_1} I_{k_1} + N_{k_1})^H.
\end{equation*}
Thus we obtain
\begin{align*}
    \Tr\left(\sum_{t=1}^T J^t_{k_1}(\lambda_1) (J^t_{k_1}(\lambda_1))^H \right) 
    &= \Tr\left(D \left[\sum_{t=1}^T J^t_{k_1}(\lambda_1) (J^t_{k_1}(\lambda_1))^H \right] D^H \right) \\
    &= \Tr\left(\underbrace{\sum_{t=1}^T (\abs{\lambda_1} I_{k_1} + N_{k_1}) (\abs{\lambda_1} I_{k_1} + N_{k_1})^H}_{=: B} \right) \\
    &= \sum_{l=1}^{k_1} B_{ll}.
\end{align*}
If $\abs{\lambda_1} \leq 1+\frac{c}{T}$, then eq. (34) in the proof of  \cite[Proposition 7.5]{Sarkar19} implies
\begin{equation} \label{eq:temp_proof_1_stabsarkar}
     B_{ll} \leq e^{2c} T \sum_{j=0}^{k_1-l} T^{2j} = e^{2c} T \left(\frac{T^{2(k_1-l+1)}-1}{T^2-1}\right) \lesssim e^{2c} T^{2(k_1-l)+1},
\end{equation}
if $T \geq 2$. This in turn implies 
$$\sum_{l=1}^{k_1} B_{ll} \lesssim e^{2c} \sum_{l=1}^{k_1} T^{2(k_1-l)+1} \lesssim e^{2c} T^{2k_1-1}$$ which readily leads to the bound in the first part of the proposition. 

When $T = 1$, notice from the first inequality in  \eqref{eq:temp_proof_1_stabsarkar} that $B_{ll} \leq e^{2c} (k_1-l+1)$, and so, 
\begin{equation*}
   \sum_{l=1}^{k_1} B_{ll} \leq  e^{2c} \sum_{l=1}^{k_1} (k_1-l+1) = \sum_{i=1}^{k_1} i \leq e^{2c} k_1^2.
\end{equation*}
This leads to the stated bound in the second part of the proposition.
\end{proof}

\newpage 

% Bibliography
%\newpage
\bibliographystyle{abbrvnat}
\bibliography{references}

\begin{thebibliography}{28}
\providecommand{\natexlab}[1]{#1}
\providecommand{\url}[1]{\texttt{#1}}
\expandafter\ifx\csname urlstyle\endcsname\relax
  \providecommand{\doi}[1]{doi: #1}\else
  \providecommand{\doi}{doi: \begingroup \urlstyle{rm}\Url}\fi

\bibitem[Abbasi-yadkori et~al.(2011)Abbasi-yadkori, P\'{a}l, and Szepesv\'{a}ri]{abbasi11}
Y.~Abbasi-yadkori, D.~P\'{a}l, and C.~Szepesv\'{a}ri.
\newblock Improved algorithms for linear stochastic bandits.
\newblock In \emph{Advances in Neural Information Processing Systems}, volume~24, 2011.

\bibitem[Basu et~al.(2015)Basu, Shojaie, and Michailidis]{basu15a}
S.~Basu, A.~Shojaie, and G.~Michailidis.
\newblock Network granger causality with inherent grouping structure.
\newblock \emph{Journal of Machine Learning Research}, 16\penalty0 (13):\penalty0 417--453, 2015.

\bibitem[Bosworth(1992)]{Bosworth92}
J.~T. Bosworth.
\newblock Linearized aerodynamic and control law models of the x-29a airplane and comparison with flight data.
\newblock \emph{National Aeronautics and Space Administration, Office of Management . . .}, 4356, 1992.

\bibitem[Brouwer and Haemers(2012)]{brouwer12}
A.~Brouwer and W.~Haemers.
\newblock \emph{Spectra of Graphs}.
\newblock New York, NY, 2012.

\bibitem[Campi and Weyer(2002)]{Campi2002}
M.~Campi and E.~Weyer.
\newblock Finite sample properties of system identification methods.
\newblock \emph{IEEE Transactions on Automatic Control}, 47\penalty0 (8):\penalty0 1329--1334, 2002.

\bibitem[Chen et~al.(2023)Chen, Ospina, Pasqualetti, and Dall’Anese]{Chen2023MultiTaskSI}
Y.~Chen, A.~M. Ospina, F.~Pasqualetti, and E.~Dall’Anese.
\newblock Multi-task system identification of similar linear time-invariant dynamical systems.
\newblock \emph{2023 62nd IEEE Conference on Decision and Control (CDC)}, pages 7342--7349, 2023.

\bibitem[Gu et~al.(2014)Gu, Pasqualetti, Cieslak, Telesford, Yu, Kahn, Medaglia, Vettel, Miller, Grafton, and Bassett]{Gu2014ControllabilityOS}
S.~Gu, F.~Pasqualetti, M.~Cieslak, Q.~K. Telesford, A.~B. Yu, A.~E. Kahn, J.~D. Medaglia, J.~M. Vettel, M.~B. Miller, S.~T. Grafton, and D.~S. Bassett.
\newblock Controllability of structural brain networks.
\newblock \emph{Nature Communications}, 6, 2014.

\bibitem[Hanson and Wright(1971)]{Hanson71}
D.~Hanson and E.~Wright.
\newblock Hanson-wright inequality and sub-gaussian concentration.
\newblock \emph{Ann. Math. Statist.}, 42:\penalty0 1079--1083, 1971.

\bibitem[Jedra and Proutiere(2020)]{Jedra20}
Y.~Jedra and A.~Proutiere.
\newblock Finite-time identification of stable linear systems optimality of the least-squares estimator.
\newblock In \emph{2020 59th IEEE Conference on Decision and Control (CDC)}, pages 996--1001, 2020.

\bibitem[Lai and Wei(1983)]{LaiWei83}
T.~Lai and C.~Wei.
\newblock Asymptotic properties of general autoregressive models and strong consistency of least-squares estimates of their parameters.
\newblock \emph{Journal of Multivariate Analysis}, 13\penalty0 (1):\penalty0 1--23, 1983.

\bibitem[Lai and Wei(1986)]{LaiWei1986}
T.~Lai and C.-Z. Wei.
\newblock Extended least squares and their applications to adaptive control and prediction in linear systems.
\newblock \emph{IEEE Transactions on Automatic Control}, 31\penalty0 (10):\penalty0 898--906, 1986.

\bibitem[Lai and Wei(1982)]{LaiWei82}
T.~L. Lai and C.~Z. Wei.
\newblock {Least Squares Estimates in Stochastic Regression Models with Applications to Identification and Control of Dynamic Systems}.
\newblock \emph{The Annals of Statistics}, 10\penalty0 (1):\penalty0 154 -- 166, 1982.

\bibitem[Modi et~al.(2024)Modi, Faradonbeh, Tewari, and Michailidis]{modi_jointlearn24}
A.~Modi, M.~K.~S. Faradonbeh, A.~Tewari, and G.~Michailidis.
\newblock Joint learning of linear time-invariant dynamical systems.
\newblock \emph{Automatica}, 164:\penalty0 111635, 2024.

\bibitem[Rudelson and Vershynin(2013)]{rudelson2013}
M.~Rudelson and R.~Vershynin.
\newblock Hanson-wright inequality and sub-gaussian concentration.
\newblock \emph{Electron. Commun. Probab.}, 18:\penalty0 9 pp., 2013.

\bibitem[Sadhanala et~al.(2016)Sadhanala, Wang, and Tibshirani]{sadhanala2016total}
V.~Sadhanala, Y.-X. Wang, and R.~J. Tibshirani.
\newblock Total variation classes beyond 1d: Minimax rates, and the limitations of linear smoothers.
\newblock \emph{Advances in Neural Information Processing Systems}, 29:\penalty0 3521–3529, 2016.

\bibitem[Sarkar and Rakhlin(2019)]{Sarkar19}
T.~Sarkar and A.~Rakhlin.
\newblock Near optimal finite time identification of arbitrary linear dynamical systems.
\newblock In \emph{Proceedings of the 36th International Conference on Machine Learning, {ICML}}, volume~97, pages 5610--5618, 2019.

\bibitem[{Shirani Faradonbeh} et~al.(2018){Shirani Faradonbeh}, Tewari, and Michailidis]{FaraUnstable18}
M.~K. {Shirani Faradonbeh}, A.~Tewari, and G.~Michailidis.
\newblock Finite time identification in unstable linear systems.
\newblock \emph{Automatica}, 96:\penalty0 342--353, 2018.

\bibitem[Simchowitz et~al.(2018)Simchowitz, Mania, Tu, Jordan, and Recht]{Simchowitz18a}
M.~Simchowitz, H.~Mania, S.~Tu, M.~I. Jordan, and B.~Recht.
\newblock Learning without mixing: Towards a sharp analysis of linear system identification.
\newblock In \emph{Proceedings of the 31st Conference On Learning Theory}, volume~75, pages 439--473, 2018.

\bibitem[Spielman(2025)]{spielmanSGT}
D.~Spielman.
\newblock Spectral and algebraic graph theory.
\newblock incomplete draft (2025), available at \texttt{http://cs-www.cs.yale.edu/homes/spielman/sagt/sagt.pdf}, 2025.

\bibitem[Srivastava et~al.(2020)Srivastava, Nozari, Kim, Ju, Zhou, Becker, Pasqualetti, Pappas, and Bassett]{model_brain20}
P.~Srivastava, E.~Nozari, J.~Z. Kim, H.~Ju, D.~Zhou, C.~Becker, F.~Pasqualetti, G.~J. Pappas, and D.~S. Bassett.
\newblock {Models of communication and control for brain networks: distinctions, convergence, and future outlook}.
\newblock \emph{Network Neuroscience}, 4\penalty0 (4):\penalty0 1122--1159, 2020.

\bibitem[Tyagi and Efimov(2024)]{tyagi2024learning}
H.~Tyagi and D.~Efimov.
\newblock Learning linear dynamical systems under convex constraints.
\newblock \emph{arxiv:2303.15121}, 2024.

\bibitem[Vershynin(2012)]{vershynin_2012}
R.~Vershynin.
\newblock \emph{Introduction to the non-asymptotic analysis of random matrices}, page 210–268.
\newblock Cambridge University Press, 2012.

\bibitem[Vershynin(2018)]{HDPbook}
R.~Vershynin.
\newblock \emph{High-Dimensional Probability: An Introduction with Applications in Data Science}.
\newblock Number~47 in Cambridge Series in Statistical and Probabilistic Mathematics. {Cambridge University Press}, 2018.

\bibitem[Vidyasagar and Karandikar(2008)]{VidyaSagar2008}
M.~Vidyasagar and R.~Karandikar.
\newblock A learning theory approach to system identification and stochastic adaptive control.
\newblock \emph{Journal of Process Control}, 18\penalty0 (3):\penalty0 421--430, 2008.

\bibitem[Wang et~al.(2023)Wang, Toso, and Anderson]{pmlr-v211-wang23d}
H.~Wang, L.~F. Toso, and J.~Anderson.
\newblock Fedsysid: A federated approach to sample-efficient system identification.
\newblock In \emph{Proceedings of The 5th Annual Learning for Dynamics and Control Conference}, volume 211, pages 1308--1320, 2023.

\bibitem[Xin et~al.(2022{\natexlab{a}})Xin, Chiu, and Sundaram]{Xin22MultTraj}
L.~Xin, G.~Chiu, and S.~Sundaram.
\newblock Learning the dynamics of autonomous linear systems from multiple trajectories.
\newblock In \emph{2022 American Control Conference (ACC)}, pages 3955--3960, 2022{\natexlab{a}}.

\bibitem[Xin et~al.(2022{\natexlab{b}})Xin, Ye, Chiu, and Sundaram]{Xin2022IdentifyingTD}
L.~Xin, L.~Ye, G.~T.-C. Chiu, and S.~Sundaram.
\newblock Identifying the dynamics of a system by leveraging data from similar systems.
\newblock \emph{2022 American Control Conference (ACC)}, pages 818--824, 2022{\natexlab{b}}.

\bibitem[Zheng and Li(2020)]{zheng2020non}
Y.~Zheng and N.~Li.
\newblock Non-asymptotic identification of linear dynamical systems using multiple trajectories.
\newblock \emph{IEEE Control Systems Letters}, 5\penalty0 (5):\penalty0 1693--1698, 2020.

\end{thebibliography}

\end{document}